\newcommand{\vertiii}[1]{{\left\vert\kern-0.25ex\left\vert\kern-0.25ex\left\vert #1 
        \right\vert\kern-0.25ex\right\vert\kern-0.25ex\right\vert}}
\title{\sf Existence of surfaces optimizing geometric and PDE shape functionals under reach constraint}
\author{ Yannick Privat\footnote{IRMA, Universit\'e de Strasbourg, CNRS UMR 7501, Inria, 7 rue Ren\'e Descartes, 67084 Strasbourg, France ({\tt yannick.privat@unistra.fr}).}~\footnote{Institut Universitaire de France (IUF).}
	\and R\'emi Robin\footnote{Laboratoire Jacques-Louis Lions, Sorbonne Universit\'e, Paris, France ({\tt remi.robin@inria.fr}).}
	\and Mario Sigalotti\footnote{Inria, France ({\tt mario.sigalotti@inria.fr}).}
}
\newcommand{\R}{\mathbb{R}}
\newcommand{\N}{\mathbb{N}}
\newcommand{\eps}{\varepsilon}
\newcommand{\Oo}{\mathscr{O}_{r_0}}
\newcommand{\smallo}{\operatorname{o}}
\newcommand{\bigO}{\operatorname{O}}
\DeclareMathOperator{\Id}{Id}
\DeclareFontFamily{U}{tipa}{}
\DeclareFontShape{U}{tipa}{m}{n}{<->tipa10}{}
\newcommand{\arc@char}{{\usefont{U}{tipa}{m}{n}\symbol{62}}}%
\newcommand{\arc}[1]{\mathpalette\arc@arc{#1}}
\newcommand{\arc@arc}[2]{%
  \sbox0{$\m@th#1#2$}%
  \vbox{
    \hbox{\resizebox{\wd0}{\height}{\arc@char}}
    \nointerlineskip
    \box0
  }%
}
\renewcommand{\geq}{\geqslant}
\renewcommand{\leq}{\leqslant}
\newtheorem{theorem}{Theorem}
\newtheorem{proposition}{Proposition}
\newtheorem{corollary}{Corollary}
\newtheorem{definition}{Definition}
\newtheorem{lemma}{Lemma}
\theoremstyle{definition}
\theoremstyle{definition}\newtheorem{remark}{Remark}
\begin{document}
\maketitle
\begin{abstract}
  This article deals with the existence of hypersurfaces minimizing general shape functionals under certain geometric constraints.
  We consider as admissible shapes orientable hypersurfaces satisfying a so-called {\it reach} condition, also known as the uniform ball property, which ensures $\mathscr{C}^{1,1}$ regularity of the hypersurface.
  In this paper, we revisit and generalise the results of \cite{guoConvergenceBoundaryHausdorff2013,dalphinUniformBallProperty2018,dalphinExistenceOptimalShapes2020}.
  We provide a simpler framework and more concise proofs of some of the results contained in these references and extend them to a new class of problems involving PDEs. Indeed, by using the signed distance introduced by Delfour and Zolesio (see for instance \cite{delfourShapesGeometries2011}), we avoid the intensive and technical use of local maps, as was the case in the above references. Our approach, originally developed to solve an existence problem in \cite{privatOptimalShapeStellarators2022}, can be easily extended to costs involving different mathematical objects associated with the domain, such as solutions of elliptic equations on the hypersurface.
\end{abstract}

\section{Framework and main results}
\subsection{Introduction}

In this paper, we are interested in the question of the existence of optimal sets for shape optimization problems involving surfaces. More precisely, we are interested in shape functionals written as
$$
  J(\Omega) = \int_{\partial \Omega} j(x,\nu_{\partial\Omega}(x), B_{\partial\Omega}(x)) \, d\mu_{\partial \Omega}(x)
$$
where $\Omega$ denotes a smooth subset of $\R^d$, the wording `smooth' being understood at this stage such that all the involved quantities make sense, $\nu$ denotes the outward pointing normal vector to $\partial\Omega$ and $B_{\partial\Omega}$ is either a purely geometric quantity such as the 
mean curvature, or the solution of a PDE on $\partial\Omega$ or on $\Omega$.

We are then interested in the existence of solutions for the optimization problem
$$
  \boxed{\inf_{\Omega \textrm{ admissible}
  }J(\Omega)}.
$$
This kind of problem is very generic. What matters here is that the standard techniques, exposed and developed for example in \cite{delfourShapesGeometries2011,henrotShapeVariationOptimization2018}, do not apply to $d-1$ objects and it is necessary to adopt a particular approach. The first question to ask is the choice of the 
{set  $\mathcal{O}_{\rm ad}$ of all admissible domains}.
Since the shape functionals we consider involve geometric quantities of the type ``outward normal vector to the boundary'' or ``mean curvature'', it is necessary that the manipulated surfaces 
are not too irregular. For this reason, we choose to impose a constraint that guarantees a uniform regularity, say $\mathscr{C}^{1,1}$, of the manipulated sets. 
This uniform regularity constraint is imposed by using the notion of ``reach''. Thus, the set $\mathcal{O}_{\rm ad}$ represents the set of surfaces having a reach uniformly bounded by below. The precise definition of this notion will be given 
in Section~\ref{sec:posReach}.

This kind of problem has been the subject of recent studies and results 
\cite{guoConvergenceBoundaryHausdorff2013,dalphinUniformBallProperty2018,dalphinExistenceOptimalShapes2020}, which have provided positive answers to the existence issues.
In their approach, the authors used an efficient, but nevertheless laborious, 
approach based on the parametrization of the manipulated surfaces, seen as regular manifolds, using local charts.

The objective of this paper is to promote a different approach, based on the extension of the functions defined on the manipulated surfaces to volume neighborhoods, the introduction of an extruded surface and the rewriting of the surface integrals as volume integrals using ad-hoc variable changes. 
This is a methodological paper, in which a proof method is presented that may work in many cases. The results contained in the article illustrate this point. We discuss possible generalizations of these results in a concluding section. 

This method allows to gain conciseness and provides much shorter and direct existence proofs than in the above references. The method also allows to extend the field of investigation to new families of problems, involving the solution of a PDE defined on a hypersurface. Nevertheless, some arguments used by the authors of \cite{guoConvergenceBoundaryHausdorff2013,dalphinUniformBallProperty2018,dalphinExistenceOptimalShapes2020} cannot be shortened by using our approach. We have therefore chosen to expose our method in a short article, in which we detail all the parts of the proof that can be condensed and we 
make the necessary reminders concerning the results that cannot be condensed.

The article is organized as follows: we introduce the definition of the reach of a surface as well as the class of admissible sets we will deal with in Section~\ref{sec:posReach}. The main results of this article, regarding several existence results for shape optimization problems involving surfaces, are provided in Section~\ref{subsec:main_th}. The whole section \ref{sec:proofsTotal} is devoted to the proofs of the main results. In these proofs, we detail the arguments based on our approach and leading to simplified proofs of the results in \cite{guoConvergenceBoundaryHausdorff2013,dalphinUniformBallProperty2018,dalphinExistenceOptimalShapes2020}.
In order to illustrate the potential of our approach, we also provide an existence result involving a general functional depending on the solution of a PDE on the sought manifold.

\subsection{Notations}
Let us recall some classical 
notations 
used
throughout this paper: 
\begin{itemize}
  \item For the sake of notational simplicity, we will sometime use the notation $\Gamma$ (resp. $\Gamma_n$) to denote the hypersurfaces $\partial \Omega$ (resp. $\partial \Omega_n$).
  \item The Euclidean inner product (resp. norm) will be denoted $\langle\cdot,\cdot\rangle$ (resp. $\Vert \cdot\Vert$ or sometimes $|\cdot|$ when no confusion with other notations is possible). 
  \item Given two positive integers $k\leq d$ and
        $\Omega \subset \R^d$,  $\mathcal{H}^k(\Omega)$ denotes the $k$-dimensional Hausdorff measure of $\Omega$.

  \item Given $\Omega \subset \R^d$, the distance (resp., signed distance) to $\Omega$ is defined for all $x\in \R^d$ by
        $$
          d_\Omega(x)= \inf_{y\in \Omega} \|x-y\|\qquad \text{(resp., }b_\Omega(x)=d_\Omega(x)-d_{\R^d \setminus\Omega}(x)).
        $$
  \item  Given $\Omega \subset \R^d$ and $h>0$, the tubular neighborhood $U_h(\Omega)$ is defined as
        $$U_h(\Omega)= \{ x \in \R^d \mid d_\Omega(x)\leq h \}.$$

  \item  Given $\Omega \subset \R^d$, the reach of $\Omega$ is defined as
        $$ \operatorname{Reach}(
        \Omega)=\sup \{ h>0 \mid d_\Omega \text{ is differentiable in } U_h(\Omega)\setminus \Omega \}.
        $$
        Recall that, if $\partial\Omega$ is a nonempty compact $\mathscr{C}^{1,1}$-hypersurface of $\R^d$, then there exists $h > 0$ such that $\Omega$ satisfies a uniform ball condition, namely
        \begin{equation}\label{ballCion}
          \tag{$\mathcal{B}_h$}
          \forall x\in \partial\Omega, \ \exists d_x\in \R^n \mid \Vert d_x\Vert_{\R^d}=1, \ B_h(x-hd_x)\subset \Omega \text{ and }B_h(x+hd_x)\subset \R^n\backslash \Omega,
        \end{equation}
        where $B_h(x)$ stands for the open ball of radius $h$ centered in $x$.
        Furthermore, assuming $\mathcal{H}^d(\partial \Omega)=0$, we have the simpler characterization
        $$
         \operatorname{Reach}(\partial          \Omega)=\sup \{h \mid \Omega \text{ satisfies \eqref{ballCion}} \}.
        $$
        Conversely, if $\partial
        \Omega$ is nonempty and satisfies Condition~\eqref{ballCion},
then its reach is 
larger than $h$ and the Lebesgue measure of $\partial\Omega$ in $\R^n$ is equal to 0. Furthermore, $\partial\Omega$ is a $\mathscr{C}^{1,1}$ hypersurface of $\R^n$. We refer for instance to Theorems 2.6 and 2.7 in \cite{dalphinUniformBallProperty2018}.
    \item For a given oriented $\mathscr{C}^{1,1}$-hypersurface $\partial \Omega$, we denote by $\nabla_{\partial \Omega}$ or $\nabla_\Gamma$ the tangential gradient and by $\nabla$ the full gradient in $\R^d$. When needed, each gradient will be assimilated to a line vector in $\R^d$.
  \item $\overline{\N}$ denotes $\N \cup \{ + \infty\}$.
  \item $\mathcal{S}^{d-1}$ denotes the unit sphere of $\R^d$.
  \item $M_d(\R)$ denotes the linear space of $d\times d$ matrices with real entries, endowed with the Euclidean operator norm $\| \cdot \| $. $\Id$ denotes the identity matrix in $\R^d$.
  \item For a given $\mathscr{C}^{1,1}$ hypersurface $\partial \Omega$, we denote by $H_{\partial \Omega}: \partial \Omega \to \R$, its mean curvature. We refer to Appendix \ref{sec:curv} for proper definitions.
\end{itemize}

\subsection{Preliminaries on sets of uniformly positive reach}\label{sec:posReach}

Given $r_0>0$ and a nonempty compact set $D\subset \R^d$, let us introduce the set $\Oo$ of admissible shapes whose  reach is bounded by $r_0$, namely
\begin{align}
  \label{def:admissible_set}
  \Oo=\{ \Omega \subset D \mid \Omega \text{ is closed}, \, \operatorname{Reach}(\partial 
  \Omega) \geq r_0 ,\, 
  \Omega \not = \emptyset,  \text{ and } \mathcal{H}^d(\partial \Omega)=0\}.
\end{align}
The elements of $\Oo$ are known to satisfy the following properties.
\begin{lemma}
  \label{lem:carac_Oo}
  Let $\Omega \in \Oo$. Then
  \begin{enumerate}
    \item \label{lem_prop:carac_Oo} $\partial \Omega$ is a $\mathscr{C}^{1,1}$ $(d-1)$-submanifold. Conversely,
          $$ \Oo=\{ \Omega \subset D \mid \Omega \text{ is closed}, \, \operatorname{Reach}(\partial 
          \Omega) \geq r_0 ,\, \partial \Omega \text{ is a $(d-1)$-submanifold}\}.$$
       \item \label{lem_prop:normal} For $x\in \partial \Omega$, $\nabla b_\Omega(x)$ is the unit outward normal vector.
    \item  \label{lem_prop:Lipschitz} For $h < r_0$, $\nabla b_\Omega$ is $\frac{2}{r_0-h}$-Lipschitz continuous on the tubular neighborhood $U_h(\partial \Omega)$.
    \item \label{lem_prop:Lipschitz_surface} The restriction of $\nabla b_\Omega$ to $\partial \Omega$ is $\frac{1}{r_0}$-Lipschitz continuous.

    \item \label{lem_prop:bounded_area} There exists a constant $C$ depending only on $d$, $r_0$, and $D$ such that $\mathcal{H}^{d-1}(\partial \Omega) \leq C$.

  \end{enumerate}
\end{lemma}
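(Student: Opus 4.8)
The plan is to derive the five assertions from the standard dictionary between positive reach, the uniform ball condition~\eqref{ballCion}, and regularity of the signed distance $b_\Omega$, all of which are recalled (with references to Theorems 2.6 and 2.7 of \cite{dalphinUniformBallProperty2018}) in Section~\ref{sec:posReach}. For item~\eqref{lem_prop:carac_Oo}, the forward implication is exactly the cited fact that a set whose boundary has reach $\geq r_0 > 0$ (and with $\mathcal{H}^d(\partial\Omega)=0$, which is automatic once~\eqref{ballCion} holds) has a $\mathscr{C}^{1,1}$ boundary. For the converse characterization, I would observe that if $\partial\Omega$ is a $(d-1)$-submanifold then it is in particular an $\mathcal{H}^d$-null set, since a $\mathscr{C}^1$ $(d-1)$-submanifold has Hausdorff dimension $d-1<d$; hence the extra condition $\mathcal{H}^d(\partial\Omega)=0$ in the definition of $\Oo$ is redundant under the submanifold hypothesis, and the two descriptions of $\Oo$ coincide.

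For item~\eqref{lem_prop:normal}, the uniform ball condition~\eqref{ballCion} at a point $x\in\partial\Omega$ provides interior and exterior tangent balls $B_{r_0}(x-r_0 d_x)\subset\Omega$ and $B_{r_0}(x+r_0 d_x)\subset\R^d\setminus\Omega$; since $\partial\Omega$ is $\mathscr{C}^{1}$, squeezing the tangent plane between these two balls forces the unit outward normal at $x$ to be $d_x$, and a direct computation of $b_\Omega$ along the segment $t\mapsto x+t d_x$ (on which $d_\Omega$ or $d_{\R^d\setminus\Omega}$ is linear with slope $1$ for $|t|<r_0$) gives $\nabla b_\Omega(x)=d_x=\nu_{\partial\Omega}(x)$. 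Items~\eqref{lem_prop:Lipschitz} and~\eqref{lem_prop:Lipschitz_surface} are the quantitative $\mathscr{C}^{1,1}$ estimates: on $U_h(\partial\Omega)$ with $h<r_0$, the map $\nabla b_\Omega$ coincides (up to sign) with $x\mapsto (x-\Pi(x))/d_\Omega(x)$ where $\Pi$ is the metric projection onto $\partial\Omega$, which is well defined and single-valued on $U_{r_0}(\partial\Omega)\setminus\partial\Omega$ because $\operatorname{Reach}(\partial\Omega)\geq r_0$; differentiating the eikonal identity $\|\nabla b_\Omega\|=1$ and the relation between $D^2 b_\Omega$ and the second fundamental form, one gets that the eigenvalues of $D^2 b_\Omega(x)$ are bounded in modulus by $1/(r_0-d_\Omega(x))\leq 1/(r_0-h)$, and accounting for the normal direction (where the second derivative vanishes, in the direction of $\nabla b_\Omega$) together with the two-sided comparison yields the operator-norm bound $2/(r_0-h)$ on $U_h$; restricting to $\partial\Omega$ itself, where $d_\Omega=0$, gives the sharper constant $1/r_0$, since there the Hessian is the shape operator whose principal curvatures are bounded by $1/r_0$. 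Finally, item~\eqref{lem_prop:bounded_area} follows by a covering/volume argument: $D$ being compact and contained in some large ball, the inner tangent balls $B_{r_0/2}(x-r_0 d_x)$ attached to a maximal $r_0$-separated family of points of $\partial\Omega$ are pairwise disjoint and contained in a fixed bounded neighborhood of $D$, bounding the covering number of $\partial\Omega$ at scale $r_0$, and then a standard estimate (or directly the area formula applied to the tubular neighborhood, using $\mathcal{H}^d(U_h(\partial\Omega))\gtrsim h\,\mathcal{H}^{d-1}(\partial\Omega)$ for $h\leq r_0$) converts this into the uniform bound $\mathcal{H}^{d-1}(\partial\Omega)\leq C(d,r_0,D)$.

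The routine parts are the differential-geometric identities for $b_\Omega$ near a set of positive reach, which are classical (Federer; see also \cite{delfourShapesGeometries2011}) and which the excerpt already licenses us to quote. I expect the only genuinely delicate point to be the sharp Lipschitz constants in items~\eqref{lem_prop:Lipschitz} and~\eqref{lem_prop:Lipschitz_surface} — in particular keeping track of the factor $2$ versus $1$, which comes from whether one is measuring the Hessian of $b_\Omega$ on a full $d$-dimensional neighborhood (where one must combine the bound on the tangential block with the vanishing normal block and the off-diagonal terms) or only its tangential restriction to $\partial\Omega$; this is where I would be most careful, and where invoking the cited theorems of \cite{dalphinUniformBallProperty2018} most directly settles the matter.
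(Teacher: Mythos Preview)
Your proposal is correct and aligns with the paper. For items~\eqref{lem_prop:carac_Oo}--\eqref{lem_prop:Lipschitz_surface} the paper is even terser than you are: it simply cites \cite[Theorem~8.2, Chapter~7]{delfourShapesGeometries2011} for items~\eqref{lem_prop:carac_Oo} and~\eqref{lem_prop:normal} and \cite[Theorems~2.7 and~2.8]{dalphinUniformBallProperty2018} for items~\eqref{lem_prop:Lipschitz} and~\eqref{lem_prop:Lipschitz_surface}, without the sketches you provide (so your caution about the constants $2/(r_0-h)$ versus $1/r_0$ is well placed but ultimately delegated to the cited references). For item~\eqref{lem_prop:bounded_area} the paper takes exactly the second of your two suggested routes: it introduces the map $T(t,x)=x+t\nabla b_\Omega(x)$ on $(-h,h)\times\partial\Omega$, bounds $\det(dT)$ uniformly near $1$ using the Lipschitz bound on $\nabla b_\Omega$, and applies the area formula to obtain $\mathcal{H}^{d-1}(\partial\Omega)\leq \tfrac{3}{4h}\mathcal{H}^d(U_h(D))$. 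Your covering alternative would also work but is not the method used; the paper favors the tubular-neighborhood computation because it is the prototype of the ``extruded surface'' technique deployed throughout Section~\ref{sec:proofsTotal}.
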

Points~\ref{lem_prop:carac_Oo} and \ref{lem_prop:normal} are
proved in \cite[Theorem 8.2, Chapter 7]{delfourShapesGeometries2011}. Points~\ref{lem_prop:Lipschitz} and \ref{lem_prop:Lipschitz_surface} are proved in \cite[Theorems~2.7 and~2.8]{dalphinUniformBallProperty2018}.
The proof of Point~\ref{lem_prop:bounded_area} is given in Section~\ref{subsubseq:proof_lem_prop:bounded_area}.

We will endow the set $\Oo$ with a `sequential' topology, by introducing a notion of convergence in this set.
\begin{definition}[$R$-convergence in $\Oo$]
  Given $(\Omega_n)_{n\in\N} \in \Oo^\N$, we say that $(\Omega_n)_{n\in \N}$ $R$-converges to $\Omega_\infty \in \Oo$ and we write $\Omega_n \xrightarrow{R} \Omega_\infty$ if 
  \begin{align}
    b_{\Omega_n} \to b_{\Omega_\infty}
    \quad \begin{cases}
      \text{in }\mathscr{C}(\overline{D}),                                                                                  \\
      \text{in }\mathscr{C}^{1,\alpha}({U_{r}(\partial \Omega_\infty)}), \, \forall r< r_{0}, \,  \forall \alpha \in [0,1), \\
      \text{weakly-star in } W^{2,\infty}({U_{r}(\partial \Omega_\infty)}), \, \forall r< r_{0}.
    \end{cases}
  \end{align}
\end{definition}

The next result justifies the interest of the class $\Oo$ endowed with the $R$-convergence for existence issues.

\begin{proposition}\label{th:compact}
 $\Oo$ is sequentially compact for the $R$-convergence.
\end{proposition}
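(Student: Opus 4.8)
The plan is to extract, from an arbitrary sequence $(\Omega_n)_{n\in\N}$ in $\Oo$, a subsequence that $R$-converges to some $\Omega_\infty\in\Oo$. The natural object to work with is the family of signed distance functions $b_{\Omega_n}$, since $R$-convergence is defined entirely in terms of them. First I would establish equi-boundedness and equi-regularity of $(b_{\Omega_n})$: every $b_{\Omega_n}$ is $1$-Lipschitz on $\R^d$ (a general property of signed distance functions) and, since all $\Omega_n\subset D$ with $D$ compact, the restrictions $b_{\Omega_n}|_{\overline D}$ are uniformly bounded. Hence by Arzelà–Ascoli, after passing to a subsequence, $b_{\Omega_n}\to f$ uniformly on $\overline D$ for some $1$-Lipschitz $f$. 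The candidate limit set is $\Omega_\infty:=\{x\in\overline D: f(x)\le 0\}$ (or its closure), and one must check $f=b_{\Omega_\infty}$; this is a standard fact about Hausdorff/uniform convergence of signed distances, using that the zero set of the limit is exactly $\partial\Omega_\infty$ — here the uniform reach lower bound is what prevents the limit boundary from degenerating (e.g. having positive $\mathcal H^d$-measure or collapsing).

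Next I would upgrade the convergence near the limit boundary. Fix $r<r_0$. By Lemma~\ref{lem:carac_Oo}\ref{lem_prop:Lipschitz}, on any tubular neighborhood $U_h(\partial\Omega_n)$ with $h<r_0$ the map $\nabla b_{\Omega_n}$ is $\frac{2}{r_0-h}$-Lipschitz, so the $b_{\Omega_n}$ are bounded in $W^{2,\infty}$ on a fixed neighborhood of $\partial\Omega_\infty$ — one needs that for $n$ large $U_r(\partial\Omega_\infty)$ is contained in $U_{h}(\partial\Omega_n)$ for some $h<r_0$, which follows from the already-established uniform convergence $b_{\Omega_n}\to b_{\Omega_\infty}$ (the zero sets converge in Hausdorff distance, so the tubular neighborhoods are nested for large $n$). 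A uniform $W^{2,\infty}$ bound plus the uniform convergence in $\mathscr C(\overline D)$ gives, via Banach–Alaoglu for the weak-$*$ $W^{2,\infty}$ topology and the compact embedding $W^{2,\infty}\hookrightarrow\mathscr C^{1,\alpha}$ on the (smooth, or at least Lipschitz) bounded domain $U_r(\partial\Omega_\infty)$, convergence $b_{\Omega_n}\to b_{\Omega_\infty}$ in $\mathscr C^{1,\alpha}$ for every $\alpha<1$ and weak-$*$ in $W^{2,\infty}$, along a further subsequence. A diagonal argument over a sequence $r\uparrow r_0$ (and $\alpha\uparrow1$) makes this work simultaneously for all $r<r_0$.

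Finally I would check that the limit $\Omega_\infty$ genuinely lies in $\Oo$: it is closed, contained in $D$, and nonempty (nonemptiness needs an argument — e.g. each $\Omega_n$ contains a ball of radius $r_0$ by the uniform ball condition \eqref{ballCion}, and after translating so these balls have a convergent subsequence of centers, the limit contains a ball of radius $r_0$, hence is nonempty; this also gives $\partial\Omega_\infty\neq\emptyset$). The reach bound $\operatorname{Reach}(\partial\Omega_\infty)\ge r_0$ is the key closedness property: one passes to the limit in the uniform ball condition \eqref{ballCion}, using that for $x\in\partial\Omega_\infty$ one can find $x_n\in\partial\Omega_n$ with $x_n\to x$ (Hausdorff convergence of boundaries) and directions $d_{x_n}=\nabla b_{\Omega_n}(x_n)$ which, by the $\mathscr C^1$ convergence just established, converge to $d_x:=\nabla b_{\Omega_\infty}(x)$; the inclusions $B_{r_0}(x_n\mp r_0 d_{x_n})\subset\Omega_n$ (resp. $\R^d\setminus\Omega_n$) then pass to the limit because of the uniform convergence of the $b_{\Omega_n}$. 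Together with $\mathcal H^d(\partial\Omega_\infty)=0$ (automatic from $\operatorname{Reach}(\partial\Omega_\infty)\ge r_0>0$, as recalled in Section~\ref{sec:posReach}), this shows $\Omega_\infty\in\Oo$ and $\Omega_n\xrightarrow{R}\Omega_\infty$ along the extracted subsequence.

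I expect the main obstacle to be the bookkeeping around the tubular neighborhoods: making rigorous that a fixed neighborhood $U_r(\partial\Omega_\infty)$ of the limit boundary is eventually swallowed by neighborhoods $U_h(\partial\Omega_n)$ with $h<r_0$ (so that the uniform $W^{2,\infty}$/Lipschitz estimates of Lemma~\ref{lem:carac_Oo} apply on a common domain), and conversely that no mass of $\partial\Omega_n$ escapes to the boundary of that neighborhood. This is exactly where the uniform reach lower bound $r_0$ is indispensable, and it is the step that the local-charts approach of the cited references handles more laboriously; here it reduces to Hausdorff convergence of the zero sets of the $b_{\Omega_n}$, which is a soft consequence of their uniform convergence.
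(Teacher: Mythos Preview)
Your proposal is correct and follows essentially the same route as the paper: Arzel\`a--Ascoli on the $1$-Lipschitz family $(b_{\Omega_n})$ for $\mathscr C^0$ compactness, then the uniform Lipschitz bound on $\nabla b_{\Omega_n}$ from Lemma~\ref{lem:carac_Oo} to get uniform $W^{2,\infty}$ control on a common tubular neighborhood, followed by Banach--Alaoglu and the compact embedding $W^{2,\infty}\hookrightarrow\mathscr C^{1,\alpha}$, and finally stability of the reach bound for the limit. The paper's own proof is terser---it outsources these very steps to \cite{delfourShapesGeometries2011} and \cite{dalphinUniformBallProperty2018}---so your plan effectively unpacks what those citations contain; the tubular-neighborhood nesting you flag as the main bookkeeping issue is handled in the paper as Lemma~\ref{lem:inclusions}, proved exactly as you suggest from the uniform convergence of the $b_{\Omega_n}$.
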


The proof of this proposition 
can be found in Section~\ref{sec:appendRCV}.
Let us end this section by providing several additional properties of the $R$-convergence.
\begin{lemma}
  \label{lem:seq_continuity}
  If $\Omega_n \xrightarrow{R} \Omega_\infty$ then
  \begin{enumerate}
    \item \label{lem:seq_continuity_perimeter} $\mathcal{H}^{d-1}(\partial \Omega_n)$ converges toward $\mathcal{H}^{d-1}(\partial \Omega_\infty)$ as $n\to +\infty$.
    \item \label{lem:seq_continuity_volume} $\mathcal{H}^{d}(\Omega_n)$ converges toward $\mathcal{H}^{d}(\Omega_\infty)$ as $n\to+\infty$.
    \item \label{lem:seq_continuity_homotopy}If all the $\partial \Omega_n$ belong to the same isotopic class, then $\partial \Omega_\infty$ also belongs such a class.
  \end{enumerate}
\end{lemma}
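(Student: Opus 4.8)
The plan is to exploit the fact that $R$-convergence gives us strong $\mathscr{C}^{1,\alpha}$ control of the signed distance functions near $\partial\Omega_\infty$, so that each of the three geometric quantities can be expressed through $b_{\Omega_\infty}$ and its gradient and then passed to the limit. For Point~\ref{lem:seq_continuity_perimeter}, I would first fix a small radius $r<r_0$ and use the coarea formula (or the tubular-neighborhood change of variables built from the normal map $x\mapsto x - b_\Omega(x)\nabla b_\Omega(x)$, which is a bi-Lipschitz homeomorphism of $U_r(\partial\Omega)$ onto $\partial\Omega\times(-r,r)$ thanks to Lemma~\ref{lem:carac_Oo}) to rewrite $\mathcal{H}^{d-1}(\partial\Omega_n)$ as a volume integral over $U_r(\partial\Omega_\infty)$ of a function built from $\nabla b_{\Omega_n}$ and $\nabla^2 b_{\Omega_n}$ (the Jacobian of the normal flow involves the second fundamental form, hence second derivatives of $b$). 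The $\mathscr{C}^{1,\alpha}$ convergence of $b_{\Omega_n}$ together with the uniform $W^{2,\infty}$ bound and weak-$\ast$ convergence of the Hessians then lets me pass to the limit in this volume integral; one has to be slightly careful that the weak-$\ast$ convergence of $\nabla^2 b_{\Omega_n}$ against an $L^1$ test function suffices because the integrand is affine in the Hessian after the coarea slicing.

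For Point~\ref{lem:seq_continuity_volume}, the cleanest route is via the characteristic functions. Since $b_{\Omega_n}\to b_{\Omega_\infty}$ uniformly on $\overline D$, and since $\Omega_n = \{b_{\Omega_n}\le 0\}$ while $\mathcal{H}^d(\partial\Omega_n)=\mathcal{H}^d(\partial\Omega_\infty)=0$, the indicator functions $\mathds{1}_{\Omega_n}$ converge to $\mathds{1}_{\Omega_\infty}$ pointwise $\mathcal{H}^d$-almost everywhere on $\overline D$ (the only possible failure set is $\partial\Omega_\infty = \{b_{\Omega_\infty}=0\}$, which has measure zero); dominated convergence on the bounded set $D$ then gives $\mathcal{H}^d(\Omega_n)\to\mathcal{H}^d(\Omega_\infty)$. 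Here I would use that for $x$ with $b_{\Omega_\infty}(x)<0$ one has $b_{\Omega_n}(x)<0$ for $n$ large, and symmetrically for $b_{\Omega_\infty}(x)>0$, which is immediate from uniform convergence.

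For Point~\ref{lem:seq_continuity_homotopy}, the idea is that strong $\mathscr{C}^1$ convergence of the signed distance functions on a fixed tubular neighborhood $U_r(\partial\Omega_\infty)$ forces $\partial\Omega_n$ to be, for $n$ large, a $\mathscr{C}^1$-small normal graph over $\partial\Omega_\infty$: writing points of $\partial\Omega_n$ via the normal retraction onto $\partial\Omega_\infty$, the map $p\mapsto p + \varphi_n(p)\nu_{\Omega_\infty}(p)$ is a diffeomorphism with $\|\varphi_n\|_{\mathscr{C}^1}\to 0$, hence isotopic to the identity, so $\partial\Omega_n$ and $\partial\Omega_\infty$ are ambiently isotopic for $n$ large; combined with the hypothesis that all $\partial\Omega_n$ lie in one isotopic class, $\partial\Omega_\infty$ lies in that class too. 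I expect this last point and the Jacobian computation in Point~\ref{lem:seq_continuity_perimeter} to be the main obstacles: the former requires making precise that the $R$-convergence does produce a uniform tubular neighborhood on which $\partial\Omega_n$ is a graph (one needs $r<r_0$ fixed and the $\mathscr{C}^1$ closeness of $\nabla b_{\Omega_n}$ to $\nabla b_{\Omega_\infty}$ to keep the normal projection a submersion on $\partial\Omega_n$), and the latter requires identifying the surface-area density in the coarea slicing and checking that the $W^{2,\infty}$ weak-$\ast$ convergence is strong enough to pass to the limit — for which it is convenient to note that, by the uniform Lipschitz bound on $\nabla b_{\Omega_n}$ and Arzelà–Ascoli, we in fact already have $\nabla b_{\Omega_n}\to\nabla b_{\Omega_\infty}$ uniformly, reducing the delicate point to the behaviour of the Hessian terms alone.
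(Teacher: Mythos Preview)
Your plans for Points~\ref{lem:seq_continuity_volume} and~\ref{lem:seq_continuity_homotopy} are correct and essentially match the paper: volume convergence is obtained there by the same sandwiching $\{b_{\Omega_\infty}\le -\eps\}\subset\Omega_n\subset\{b_{\Omega_\infty}<\eps\}$ coming from uniform convergence of $b_{\Omega_n}$, and the isotopy is produced by showing that the restriction $\tau_n=p_n|_{\Gamma_\infty}$ of the orthogonal projection onto $\Gamma_n$ is a diffeomorphism for large $n$ and then interpolating linearly, which is your normal-graph argument with the roles of $\Gamma_n$ and $\Gamma_\infty$ exchanged.

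For Point~\ref{lem:seq_continuity_perimeter} there is a genuine gap. Your claim that ``the integrand is affine in the Hessian after the coarea slicing'' is false for $d\ge 3$: the Jacobian of the normal map $T_n(t,x)=x+t\nabla b_{\Omega_n}(x)$ is $\det(\Id+t\,d_x\nabla b_{\Omega_n})=\prod_{i=1}^{d-1}(1+t\kappa_i)$, a polynomial of degree $d-1$ in the principal curvatures, and weak-$\ast$ convergence of $\nabla^2 b_{\Omega_n}$ in $L^\infty$ does not pass through such products. The correct observation, which is what the paper uses, is that one does not need to pass the Hessian to the limit at all: since $\nabla b_{\Omega_n}$ is $\tfrac{2}{r_0}$-Lipschitz uniformly in $n$, one has $\det(dT_n)=1+\smallo_{h\to 0}(1)$ on $(-h,h)\times\Gamma_n$ uniformly in $n\in\overline{\N}$. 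Hence $\mathcal{H}^{d-1}(\Gamma_n)=\tfrac{1}{2h}\mathcal{H}^d(U_h(\Gamma_n))\,(1+\smallo_{h\to 0}(1))$, and the inclusions $U_{h-t}(\Gamma_\infty)\subset U_h(\Gamma_n)\subset U_{h+t}(\Gamma_\infty)$ for large $n$ reduce everything to comparing volumes of tubes around $\Gamma_\infty$; letting $h\to 0$ with $t=\smallo(h)$ finishes. Equivalently, in your graph map $x\mapsto x-b_{\Omega_n}(x)\nabla b_{\Omega_n}(x)$ restricted to $\Gamma_\infty$, the Hessian appears only multiplied by $b_{\Omega_n}$, which tends to $0$ uniformly on $\Gamma_\infty$; together with the uniform bound on $\nabla^2 b_{\Omega_n}$ this already gives $\operatorname{Jac}(\tau_n)\to 1$ in $L^\infty(\Gamma_\infty)$, and the perimeter convergence follows without any appeal to weak-$\ast$ convergence of the Hessian.
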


The proof of this lemma is given in Section~\ref{sec:proof-lem:seq_continuity}.

\begin{remark}
  According to Lemma \ref{lem:seq_continuity}, we obtain for example that for a given $\Omega_0\in \Oo$, and $a\leq b$,
  $$\{ \Omega \in \Oo \mid a\leq \mathcal{H}^{d-1}(\partial \Omega)\leq b , \, \partial \Omega \text{ is isotopic to } \partial \Omega_0 \}$$
  is a sequentially compact set.
\end{remark}

\subsection{Main results}
\label{subsec:main_th}

Let us introduce 
the 
general shape functional
\begin{align}
  \label{F1}
  F_1(\Omega)=  & \int_{\partial \Omega} j_1(x,\nu(x),H_{\partial \Omega}(x)) \, d\mu_{\partial \Omega}(x),
\end{align}
where $j_1$ is continuous from $\R^d\times \mathcal{S}^{d-1}\times \R$ to $\R$ and convex with respect to its last variable. We 
recall that $\nu$ and $H_{\partial \Omega}$ denote respectively the outward pointing normal vector and the mean curvature.

According to \cref{th:compact}, the set $\Oo$ is sequentially compact for the $R$-convergence. Therefore, 
in order to infer the existence of an optimal surface minimizing $F_1$ over $\Oo$ it is enough to prove the lower semicontinuity of functional $F_1$ (under suitable assumptions on the function $j_1$). This is the main purpose of the following result.

\begin{theorem}[\cite{dalphinUniformBallProperty2018}, Theorem 1.3]  \label{th:lsc}
  Let us assume that $j_1$ is continuous with respect to all variables and convex with respect to its last one. 
  Then, $F_1$ is a lower semi-continuous shape functional for the $R$-convergence, i.e., for every sequence $(\Omega_n)_{n\in\N} \in \Oo^\N$ that $R$-converges toward $\Omega_\infty$, one has
  \begin{align}
    \liminf_{n\to +\infty} F_1(\Omega_n) \geq F_1(\Omega_\infty).
  \end{align}
  As a consequence, the shape optimization problem
  $$
    \inf_{\Omega\in \Oo}F_1(\Omega)
  $$
  has a solution.
\end{theorem}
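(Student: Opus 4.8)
The plan is to exploit the proposed methodology of the paper: extend the surface integrand to a volume neighborhood, rewrite the surface integrals as volume integrals over a fixed extruded neighborhood using the signed distance function, and then apply a standard convexity/weak-lower-semicontinuity argument. Concretely, fix a sequence $\Omega_n \xrightarrow{R} \Omega_\infty$ in $\Oo$, and fix some $r < r_0$. By definition of $R$-convergence we have $b_{\Omega_n} \to b_{\Omega_\infty}$ in $\mathscr{C}^{1,\alpha}(U_r(\partial\Omega_\infty))$ for all $\alpha<1$, and weakly-$\ast$ in $W^{2,\infty}(U_r(\partial\Omega_\infty))$; moreover $b_{\Omega_n} \to b_{\Omega_\infty}$ uniformly on $\overline D$. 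The first step is to make sense of all quantities uniformly: on the tube $U_r(\partial\Omega_\infty)$ the projection onto $\partial\Omega_n$ is well-defined for $n$ large (since $\partial\Omega_n$ is eventually contained in a thin tube around $\partial\Omega_\infty$, by uniform convergence of the signed distances and the uniform ball condition), and on this tube $\nu$ extends as $\nabla b_{\Omega_n}$ while the mean curvature $H_{\partial\Omega_n}$ extends via the trace of $D^2 b_{\Omega_n}$ (suitably normalized, see \cref{lem:carac_Oo}\ref{lem_prop:Lipschitz}). Thus I would write
\begin{align*}
  F_1(\Omega_n) = \int_{\partial\Omega_n} j_1(x,\nabla b_{\Omega_n}(x), H_{\partial\Omega_n}(x))\, d\mathcal H^{d-1}(x).
\end{align*}

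The second step is the change of variables turning this surface integral into a volume integral over the fixed domain $U_\rho(\partial\Omega_\infty)$ for a suitable small $\rho$. Using the coarea formula with the function $b_{\Omega_n}$ (whose gradient has unit norm on the tube), or equivalently the diffeomorphism $(y,t)\mapsto y + t\,\nabla b_{\Omega_n}(y)$ from $\partial\Omega_n\times(-\rho,\rho)$ onto a neighborhood of $\partial\Omega_n$, one obtains
\begin{align*}
  \int_{-\rho}^{\rho}\!\! F_1(\Omega_n)\,dt = \int_{U_\rho(\partial\Omega_\infty)} j_1\!\big(\pi_n(x),\nabla b_{\Omega_n}(x),H_{\partial\Omega_n}(\pi_n(x))\big)\, \mathfrak J_n(x)\, dx + o(1),
\end{align*}
where $\pi_n$ is the projection onto $\partial\Omega_n$ and $\mathfrak J_n$ is the Jacobian factor coming from the level-set foliation; the point is that $\pi_n \to \pi_\infty$ uniformly and $\mathfrak J_n \to \mathfrak J_\infty$ uniformly (both are built from $\nabla b_{\Omega_n}$ and $D^2 b_{\Omega_n}$, which converge strongly in $\mathscr{C}^0$). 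A cleaner variant, which I would actually prefer, keeps the tube centered on $\partial\Omega_\infty$: reparametrize everything by the foliation of $U_\rho(\partial\Omega_\infty)$ coming from $b_{\Omega_\infty}$, so that the only $n$-dependence sits inside the integrand through $\nabla b_{\Omega_n}$, $D^2 b_{\Omega_n}$, and the (strongly convergent, hence negligible in the limit) geometric correction factors.

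The third step is the lower-semicontinuity itself. Having reduced to a volume integral $\int_{U_\rho} g_n(x)\,dx$, I would split $j_1 = $ (terms depending only on the strongly convergent data $x,\nu$) $+$ (the part depending on $H$, which appears only through $D^2 b_{\Omega_n}$, converging merely weakly-$\ast$ in $L^\infty$). Since $j_1$ is convex in its last variable and continuous in all variables, and since $\nabla b_{\Omega_n}$ together with the base-point data converge strongly while $D^2 b_{\Omega_n}\rightharpoonup D^2 b_{\Omega_\infty}$ weakly-$\ast$ in $L^\infty \subset L^1_{\mathrm{loc}}$, the classical weak lower semicontinuity theorem for convex integrands (Ioffe's theorem / the standard result on $\liminf \int f(x,u_n,v_n) \ge \int f(x,u,v)$ when $u_n\to u$ strongly, $v_n\rightharpoonup v$ weakly, $f$ Carathéodory and convex in $v$) applies and yields $\liminf_n \int_{U_\rho} g_n \ge \int_{U_\rho} g_\infty$. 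Dividing by $2\rho$ and letting $\rho \to 0$ (or simply fixing one $\rho$ and using Fubini, noting $F_1(\Omega_n)$ does not depend on $t$) recovers $\liminf_n F_1(\Omega_n)\ge F_1(\Omega_\infty)$. The existence of a minimizer is then immediate: take a minimizing sequence, extract an $R$-convergent subsequence by \cref{th:compact}, and apply the lower semicontinuity just proved.

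The main obstacle, and the step deserving the most care, is the second one: setting up the change of variables so that (a) it is a genuine diffeomorphism with controlled (uniformly convergent) Jacobian, (b) the projection map $\pi_n$ and all geometric correction factors are expressed purely in terms of $b_{\Omega_n}$ and converge strongly in $\mathscr{C}^0$, and (c) the surface measure $d\mathcal H^{d-1}$ on the \emph{moving} surface $\partial\Omega_n$ is correctly disintegrated over the \emph{fixed} tube $U_\rho(\partial\Omega_\infty)$. This is exactly where the signed-distance viewpoint of Delfour–Zolésio pays off, replacing the local-chart bookkeeping of \cite{dalphinUniformBallProperty2018}; one must check that the eventual inclusion $\partial\Omega_n \subset U_{\rho/2}(\partial\Omega_\infty)$ holds (from uniform convergence of signed distances plus the uniform reach lower bound), and that $\nabla b_{\Omega_\infty}$ restricted to the tube is still transverse to $\partial\Omega_n$ so that the two foliations are mutually graph-like. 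Once this geometric reduction is in place, the convexity argument in Step 3 is entirely standard.
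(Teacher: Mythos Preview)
Your approach is essentially the paper's: rewrite $F_1(\Omega_n)$ as a volume integral over a thin tube via the map $(t,x)\mapsto x+t\nabla b_{\Omega_n}(x)$, sandwich the tube of $\Gamma_n$ between tubes of $\Gamma_\infty$, and apply an Ioffe-type lower semicontinuity result with the first two slots converging strongly and the curvature slot weakly. The structure is right and the identification of the ``main obstacle'' (controlling the change of variables and the domain mismatch) is accurate.

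There is, however, one concrete slip worth correcting. You write that the Jacobian $\mathfrak J_n\to\mathfrak J_\infty$ uniformly because both are built from $\nabla b_{\Omega_n}$ and $D^2 b_{\Omega_n}$, ``which converge strongly in $\mathscr{C}^0$''. This is false for $D^2 b_{\Omega_n}$: it converges only weak-$\ast$ in $L^\infty$, and this is precisely why convexity in the last variable is needed at all. The Jacobian $\det(dT_n)$ genuinely contains the Hessian, so you cannot claim pointwise convergence of $\mathfrak J_n$. The paper's fix (\cref{lem:det_bounded}) is not convergence but uniform smallness: $\det(dT_n)=1+o_{h\to 0}(1)$ uniformly in $n$, so the Jacobian factor can be absorbed into an error term that vanishes as the tube thickness $h\to 0$. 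Relatedly, the passage from $H_{\Gamma_n}(\pi_n(x))$ to something evaluated at $x$ (rather than at the projected point) is exactly where one needs the Delfour--Zol\'esio identity $\nabla^2 b_{\Omega_n}(p_n(x))=\nabla^2 b_{\Omega_n}(x)[\Id - b_{\Omega_n}(x)\nabla^2 b_{\Omega_n}(x)]^{-1}$, giving $H_{\Gamma_n}(p_n(x))=\operatorname{Tr}\nabla^2 b_{\Omega_n}(x)+O(h)$; only then is the weakly converging quantity actually $\operatorname{Tr}\nabla^2 b_{\Omega_n}$ at the integration point, as Ioffe requires. Your Fubini shortcut (``fixing one $\rho$'') does not work for the same reason: the integrand is not constant along normal fibers, so one must let $h\to 0$ (with the annulus parameter $t=o(h)$) to kill both the Jacobian error and the domain-mismatch term.
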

It is notable that, by applying \cref{th:lsc} both to $j_1$ and $-j_1$, we get the following corollary.

\begin{corollary}
  \label{cor:continuity}
  If $j_1$ is continuous and linear in the last variable, then $F_1$ is a continuous shape functionals for the $R$-convergence.
\end{corollary}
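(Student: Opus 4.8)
The plan is to deduce \Cref{cor:continuity} directly from \Cref{th:lsc} by a standard ``apply lower semicontinuity to both $j_1$ and $-j_1$'' argument. Since $j_1$ is continuous and linear (hence affine) in its last variable, the function $(x,\nu,H)\mapsto -j_1(x,\nu,H)$ is again continuous in all variables and linear — in particular convex — in its last variable, so it satisfies the hypotheses of \Cref{th:lsc}. Thus both $F_1$ and $-F_1$ are lower semicontinuous shape functionals for the $R$-convergence, where $-F_1(\Omega)=\int_{\partial\Omega}(-j_1)(x,\nu(x),H_{\partial\Omega}(x))\,d\mu_{\partial\Omega}(x)$.

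First I would fix a sequence $(\Omega_n)_{n\in\N}\in\Oo^\N$ with $\Omega_n\xrightarrow{R}\Omega_\infty$. Applying \Cref{th:lsc} to $j_1$ gives $\liminf_{n\to+\infty}F_1(\Omega_n)\geq F_1(\Omega_\infty)$. Applying it to $-j_1$ gives $\liminf_{n\to+\infty}(-F_1(\Omega_n))\geq -F_1(\Omega_\infty)$, that is, $\limsup_{n\to+\infty}F_1(\Omega_n)\leq F_1(\Omega_\infty)$. Combining the two inequalities yields $\lim_{n\to+\infty}F_1(\Omega_n)=F_1(\Omega_\infty)$, which is precisely the sequential continuity of $F_1$ for the $R$-convergence. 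Since the $R$-convergence topology on $\Oo$ is defined sequentially (via the notion of $R$-convergence of sequences), sequential continuity is the appropriate notion of continuity here, and this completes the argument.

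There is essentially no obstacle: the only point that requires a moment's attention is verifying that $-j_1$ still meets the assumptions of \Cref{th:lsc}, which is immediate since negation preserves continuity and sends the class of functions affine in the last variable into itself (the convexity required by \Cref{th:lsc} holds because an affine function is both convex and concave). One could also note in passing that linearity of $j_1$ in its last variable is used in an essential way: mere convexity of $j_1$ would not make $-j_1$ convex, so the corollary genuinely needs the stronger hypothesis. I would keep the proof to a few lines, simply writing out the two applications of \Cref{th:lsc} and the resulting squeeze of $\liminf$ and $\limsup$.
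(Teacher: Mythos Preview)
Your proposal is correct and follows exactly the approach indicated in the paper, which explicitly states that the corollary is obtained ``by applying \cref{th:lsc} both to $j_1$ and $-j_1$.'' Your write-up of the $\liminf$/$\limsup$ squeeze and the observation that linearity in the last variable makes $-j_1$ convex again are precisely the details the paper leaves implicit.
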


\begin{remark}
  \label{th:lsc2}
  In the case where $d=3$, it is proved in \cite[Theorem 1.3]{dalphinUniformBallProperty2018} that \cref{th:lsc} holds if we replace the mean curvature by the Gaussian one in the definition of $F_2$. We do not provide a proof here since most of the difficulties are related to the convergence of a product of weak-star converging sequences and our approach does not change the proof in a significant way.
\end{remark}
Let us now 
consider two classes of shape optimization problems
 involving either an elliptic PDE inside $\Omega$ or an elliptic PDE on the $\mathscr{C}^{1,1}$ hypersurface $\partial \Omega$.

\paragraph{Problems involving an elliptic PDE on a $\mathscr{C}^{1,1}$-hypersurface of $\R^d$.}
Given $f\in \mathscr{C}^0(D)$, we consider the problem of minimizing
a shape functional depending on the
solution  $v_{\partial \Omega}$ of the equation
\begin{align}\label{PDEmanifold}
  \Delta_{\Gamma} v_{\partial \Omega}(x)=f(x) \quad \text{ in } \partial \Omega,
\end{align}
where $\Delta_{\partial \Omega}$ denotes the Laplace--Beltrami operator on $\partial \Omega$.
Since we are not considering $\mathscr{C}^\infty$ manifolds but rather $\mathscr{C}^{1,1}$ ones, we need to explain how the PDE must be understood. We use here an energy formulation, defining, for a closed and nonempty hypersurface $\partial\Omega$, the functional
\begin{equation}\label{def:NRJ}
  \mathscr{E}_{\partial \Omega}: H^1_*(\partial \Omega)\ni u\mapsto \frac12 \int_{\partial \Omega}|\nabla_{\Gamma} u(x)|^2d\mu_{\partial\Omega}-\int_{\partial \Omega}f(x)u(x)d\mu_{\partial\Omega}
\end{equation}
where $H^1_*(\partial \Omega)$ denotes the Sobolev space of functions in $H^1(\partial \Omega)$ with zero mean on $\partial \Omega$. We hence define $v_{\partial\Omega}$ as the unique solution of the minimization problem
\begin{equation}\label{minNRJEpOm}
  \min_{u\in H^1_*(\partial \Omega)}\mathscr{E}_{\partial \Omega}(u).
\end{equation}

\begin{lemma}\label{lem:1915}
  Let $\Omega\in \Oo$. Problem \eqref{minNRJEpOm} has a unique solution $v_{\partial\Omega}$. Furthermore, if $\partial\Omega$ is $\mathscr{C}^2$ and if $f\in \mathscr{C}^0(D)$, then $v_{\partial\Omega}$ satisfies \eqref{PDEmanifold} almost everywhere in $\partial\Omega$.
\end{lemma}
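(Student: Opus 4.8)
The plan is to establish existence and uniqueness of a minimizer for \eqref{minNRJEpOm} via the direct method in the calculus of variations, and then to recover the PDE \eqref{PDEmanifold} from the Euler--Lagrange equation under the extra regularity assumption. First I would verify that $H^1_*(\partial\Omega)$ is a Hilbert space: since $\Omega\in\Oo$, Lemma~\ref{lem:carac_Oo} tells us that $\partial\Omega$ is a compact $\mathscr{C}^{1,1}$ $(d-1)$-submanifold, hence $H^1(\partial\Omega)$ is well-defined and complete, and the zero-mean condition $\int_{\partial\Omega}u\,d\mu_{\partial\Omega}=0$ is a closed linear constraint (the mean is a bounded linear functional on $H^1$, since $\mathcal{H}^{d-1}(\partial\Omega)<\infty$ by Lemma~\ref{lem:carac_Oo}\ref{lem_prop:bounded_area}), so $H^1_*(\partial\Omega)$ is a closed subspace.

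The key analytic ingredient is the Poincaré--Wirtinger inequality on $\partial\Omega$: there is a constant $C_P>0$ such that $\|u\|_{L^2(\partial\Omega)}\le C_P\|\nabla_\Gamma u\|_{L^2(\partial\Omega)}$ for all $u\in H^1_*(\partial\Omega)$. This follows from the compactness of the embedding $H^1(\partial\Omega)\hookrightarrow L^2(\partial\Omega)$ on a compact $\mathscr{C}^{1,1}$ manifold together with a standard contradiction argument (a normalized sequence violating the inequality would converge strongly in $L^2$ to a constant of unit norm with zero mean, a contradiction). Given this, the quadratic form $u\mapsto\frac12\int_{\partial\Omega}|\nabla_\Gamma u|^2$ is coercive on $H^1_*(\partial\Omega)$ with respect to the $H^1$ norm, and the linear term $u\mapsto\int_{\partial\Omega}fu$ is continuous (again using $f\in\mathscr{C}^0(D)$, hence bounded on the compact $\partial\Omega$, and $\mathcal{H}^{d-1}(\partial\Omega)<\infty$). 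Therefore $\mathscr{E}_{\partial\Omega}$ is strictly convex, continuous, and coercive on the Hilbert space $H^1_*(\partial\Omega)$, so by the direct method (or equivalently Lax--Milgram applied to the bilinear form on $H^1_*$) it admits a unique minimizer $v_{\partial\Omega}$. Writing the first-order optimality condition, $v_{\partial\Omega}$ is characterized by $\int_{\partial\Omega}\langle\nabla_\Gamma v_{\partial\Omega},\nabla_\Gamma\varphi\rangle\,d\mu_{\partial\Omega}=\int_{\partial\Omega}f\varphi\,d\mu_{\partial\Omega}$ for all $\varphi\in H^1_*(\partial\Omega)$, and since both sides vanish when $\varphi$ is constant (the right-hand side because adding a constant to a test function does not change it, using that constants are admissible perturbations — here one checks $\int_{\partial\Omega}f\cdot 1 = \int_{\partial\Omega}f$ need not vanish, so one argues instead that the weak formulation extends to all $\varphi\in H^1(\partial\Omega)$ only if $\int_{\partial\Omega}f=0$; more carefully, the variational identity holds for all $\varphi\in H^1_*$, which suffices).

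Finally, for the regularity statement: if $\partial\Omega$ is $\mathscr{C}^2$ and $f\in\mathscr{C}^0(D)$, then $\partial\Omega$ is a compact Riemannian manifold of class $\mathscr{C}^2$ and the weak formulation above is precisely the distributional form of $\Delta_\Gamma v_{\partial\Omega}=f$. One concludes by classical elliptic regularity for the Laplace--Beltrami operator: since $f\in\mathscr{C}^0(\partial\Omega)\subset L^p(\partial\Omega)$ for all $p<\infty$, $W^{2,p}$ regularity gives $v_{\partial\Omega}\in W^{2,p}(\partial\Omega)$ for all $p$, hence $\Delta_\Gamma v_{\partial\Omega}=f$ holds $\mu_{\partial\Omega}$-almost everywhere (one cannot in general expect $\mathscr{C}^2$ regularity of $v$ without Hölder continuity of $f$, but almost-everywhere validity is exactly what is claimed). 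I expect the main obstacle to be stating the Poincaré--Wirtinger inequality and the elliptic regularity on a $\mathscr{C}^{1,1}$ (resp.\ $\mathscr{C}^2$) manifold with the right amount of care — in particular making sure the constants and the functional-analytic setting are legitimate at this low regularity — whereas the convexity/coercivity bookkeeping is routine.
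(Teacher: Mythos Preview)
Your proposal is correct and follows essentially the same route as the paper: the direct method in the calculus of variations, driven by a Poincar\'e--Wirtinger inequality on $\partial\Omega$ and the compact embedding $H^1(\partial\Omega)\hookrightarrow L^2(\partial\Omega)$, followed by strict convexity for uniqueness. The paper organizes the argument by first stating and proving a Rellich--Kondrachov theorem on the surface (via the extruded-surface equivalence of norms) and then a Poincar\'e inequality that is \emph{uniform over $\Oo$}; these uniform versions are needed later for Theorem~\ref{th:elliptic_in_boundary}, but for the present lemma your per-$\Omega$ versions are enough, so your shortcut is legitimate here. Your handling of the regularity part via $W^{2,p}$ elliptic theory on a $\mathscr{C}^2$ manifold is also fine (the paper's own proof is in fact silent on this half of the statement), and your remark that the Euler--Lagrange identity is tested only against $\varphi\in H^1_*(\partial\Omega)$---so that strictly one recovers $\Delta_\Gamma v_{\partial\Omega}=f$ only up to an additive constant unless $\int_{\partial\Omega}f=0$---is a correct observation about a minor imprecision in the statement.
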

The proof of this result is postponed to Section~\ref{proof:lem:1915}.

Let us introduce the shape functional
$$
  F_2(\Omega)=\int_{\partial \Omega} j_2(x,\nu(x),v_{\partial \Omega}(x),\nabla_{\Gamma} v_{\partial \Omega}(x)) \, d\mu_{\partial \Omega}(x),
$$
where $j_2 : \R^d \times \mathcal{S}^{d-1} \times \R \times \R^d \to \R$ is assumed to be continuous.

\begin{theorem}
  \label{th:elliptic_in_boundary}
  The shape functional $F_2$ is  lower semi-continuous for the $R$-convergence, i.e., for every sequence $(\Omega_n)_{n\in\N} \in \Oo^\N$ that $R$-converges toward $\Omega_\infty$, one has
  \begin{align}
    \liminf_{n\to +\infty} F_2(\Omega_n) \geq F_2(\Omega_\infty).
  \end{align}
  As a consequence, the shape optimization problem
  $$
    \inf_{\Omega\in \Oo}F_2(\Omega)
  $$
  has a solution.
\end{theorem}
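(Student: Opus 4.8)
The plan is to deduce the existence statement from the lower-semicontinuity one via the sequential compactness of $\Oo$ (\cref{th:compact}), so I would concentrate on the semicontinuity. Fix $(\Omega_n)_{n\in\N}\in\Oo^\N$ with $\Omega_n\xrightarrow{R}\Omega_\infty$, and write $\Gamma_n=\partial\Omega_n$, $v_n=v_{\Gamma_n}$ (well defined by \cref{lem:1915}). The idea is to transport everything living on the moving surface $\Gamma_n$ to the \emph{fixed} surface $\Gamma_\infty=\partial\Omega_\infty$. For $n$ large one has $\Gamma_n\subset U_r(\Gamma_\infty)$ for some $r<r_0$ (from $b_{\Omega_n}\to b_{\Omega_\infty}$ in $\mathscr{C}(\overline D)$), and I would use the map $\Phi_n\colon\Gamma_\infty\to\Gamma_n$, $\Phi_n(y)=y-b_{\Omega_n}(y)\nabla b_{\Omega_n}(y)$, which for $n$ large is a bi-Lipschitz homeomorphism onto $\Gamma_n$. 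Differentiating along $\tau\in T_y\Gamma_\infty$ gives $D\Phi_n(y)\tau=\tau-\langle\nabla b_{\Omega_n}(y),\tau\rangle\nabla b_{\Omega_n}(y)-b_{\Omega_n}(y)\nabla^2b_{\Omega_n}(y)\tau$; here $\langle\nabla b_{\Omega_n}(y),\tau\rangle\to0$ uniformly (since $\nabla b_{\Omega_n}\to\nabla b_{\Omega_\infty}$ uniformly and $\nabla b_{\Omega_\infty}(y)\perp T_y\Gamma_\infty$), and the only term carrying the merely bounded Hessian $\nabla^2b_{\Omega_n}$ (bounded uniformly by \cref{lem:carac_Oo}) comes with the prefactor $b_{\Omega_n}(y)\to0$ uniformly. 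Hence $D\Phi_n\to\Id$ uniformly on $\Gamma_\infty$, and the tangential Jacobian $J_n$ of $\Phi_n$ satisfies $J_n\to1$ uniformly; in particular $\int_{\Gamma_n}g\,d\mu_{\Gamma_n}=\int_{\Gamma_\infty}(g\circ\Phi_n)J_n\,d\mu_{\Gamma_\infty}$.

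The first main step is to prove that $\tilde v_n:=v_n\circ\Phi_n$ converges \emph{strongly} in $H^1(\Gamma_\infty)$ to $v_\infty:=v_{\Gamma_\infty}$. Changing variables in \eqref{def:NRJ} rewrites \eqref{minNRJEpOm} on $\Gamma_n$ as the minimization, over $\{u\in H^1(\Gamma_\infty):\int_{\Gamma_\infty}uJ_n\,d\mu_{\Gamma_\infty}=0\}$, of $u\mapsto\frac12\int_{\Gamma_\infty}\langle A_n\nabla_{\Gamma_\infty}u,\nabla_{\Gamma_\infty}u\rangle\,d\mu_{\Gamma_\infty}-\int_{\Gamma_\infty}(f\circ\Phi_n)u\,J_n\,d\mu_{\Gamma_\infty}$, where the symmetric field $A_n$ is an explicit continuous function of $D\Phi_n$ and $J_n$. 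By the previous paragraph $A_n\to\Id$ uniformly, so the $A_n$ are uniformly elliptic, and $f\circ\Phi_n\to f$ uniformly. Combined with the fixed Poincaré--Wirtinger inequality on $\Gamma_\infty$ and $J_n\to1$, this makes the transported energies uniformly coercive, which yields a uniform bound $\|\nabla_{\Gamma_\infty}\tilde v_n\|_{L^2}\le C$ and hence $\|\nabla_{\Gamma_n}v_n\|_{L^2(\Gamma_n)}\le C$. A routine argument then concludes: up to a subsequence $\tilde v_n\rightharpoonup w$ in $H^1(\Gamma_\infty)$, the weak formulation passes to the limit because the coefficients converge, so $w=v_\infty$ by uniqueness, and convergence of the Dirichlet energies upgrades weak to strong convergence; as the limit is independent of the subsequence, the full sequence $\tilde v_n$ converges to $v_\infty$ strongly in $H^1(\Gamma_\infty)$.

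The second step provides uniform $\mathscr{C}^1$-bounds for $v_n$. Since $f\in\mathscr{C}^0(D)$ is bounded and each $\Gamma_n$ is a $\mathscr{C}^{1,1}$-hypersurface admitting a $\mathscr{C}^{1,1}$-atlas whose charts and transition maps are controlled only in terms of $r_0$ (this is the uniform ball property machinery of \cite{dalphinUniformBallProperty2018}, one of the ingredients our approach does not shorten), interior elliptic regularity for the Laplace--Beltrami operator with Lipschitz metric, together with the uniform bound on $\|\nabla_{\Gamma_n}v_n\|_{L^2(\Gamma_n)}$, yields $\|v_n\|_{W^{2,p}(\Gamma_n)}\le C_p$ for every finite $p$, uniformly in $n$. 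Taking $p>d-1$ and using the Sobolev embedding on $\Gamma_n$ (with uniform constants) gives $\|v_n\|_{\mathscr{C}^1(\Gamma_n)}\le C_0$ independently of $n$. Consequently there is a compact set $K\subset\R^d\times\mathcal{S}^{d-1}\times\R\times\R^d$ with $(x,\nu_{\Gamma_n}(x),v_n(x),\nabla_{\Gamma_n}v_n(x))\in K$ for $\mathcal{H}^{d-1}$-a.e.\ $x\in\Gamma_n$ and all $n$ (recall $\Gamma_n\subset D$, which is compact).

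Finally I would pass to the limit in $F_2$. Transporting through $\Phi_n$, $F_2(\Omega_n)=\int_{\Gamma_\infty}j_2(\Phi_n,\,\nu_{\Gamma_n}\circ\Phi_n,\,\tilde v_n,\,G_n)\,J_n\,d\mu_{\Gamma_\infty}$, where $G_n:=(\nabla_{\Gamma_n}v_n)\circ\Phi_n$ is obtained from $\nabla_{\Gamma_\infty}\tilde v_n$ by a uniformly bounded linear map depending on $D\Phi_n$ (converging to the identity). Given a subsequence realizing $\liminf_n F_2(\Omega_n)$, extract a further subsequence along which (by Step 1) $\tilde v_n\to v_\infty$ and $\nabla_{\Gamma_\infty}\tilde v_n\to\nabla_{\Gamma_\infty}v_\infty$, hence $G_n\to\nabla_{\Gamma_\infty}v_\infty$, $\mathcal{H}^{d-1}$-a.e.\ on $\Gamma_\infty$, while $\Phi_n\to\mathrm{id}$, $\nu_{\Gamma_n}\circ\Phi_n=\nabla b_{\Omega_n}\circ\Phi_n\to\nabla b_{\Omega_\infty}=\nu_{\Gamma_\infty}$ and $J_n\to1$ uniformly. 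By continuity of $j_2$ the integrands converge a.e., and by Step 2 they are uniformly bounded (values of the continuous $j_2$ on the fixed compact $K$, times the bounded $J_n$). Dominated convergence on the fixed finite measure space $(\Gamma_\infty,\mathcal{H}^{d-1})$ gives $F_2(\Omega_n)\to F_2(\Omega_\infty)$ along the subsequence, whence $\liminf_n F_2(\Omega_n)=F_2(\Omega_\infty)$; thus $F_2$ is sequentially continuous --- a fortiori lower semicontinuous --- for the $R$-convergence, and existence follows from \cref{th:compact}. I expect Step 1 to be the main obstacle, namely the continuity of the Laplace--Beltrami solution operator under $R$-convergence: arranging the transport $\Phi_n$ so that the pulled-back coefficients converge with $n$-independent ellipticity (the key point being that the rough Hessian term is damped by $b_{\Omega_n}\to0$) and securing the uniform coercivity of the transported energies are the delicate parts, whereas the uniform elliptic regularity of Step 2, being chart-based, is precisely one of the classical arguments that the signed-distance method does not condense.
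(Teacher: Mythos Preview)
Your Step~1 is essentially the paper's argument: both transport via the same map $\tau_n=p_n|_{\Gamma_\infty}$ (your $\Phi_n$), both show that $D\tau_n\to\Id$ and the tangential Jacobian tends to $1$ uniformly (this is the content of \cref{lem:p_n_det,lem:gradient_relation}, and your observation that the Hessian term carries the damping prefactor $b_{\Omega_n}\to 0$ is exactly the mechanism behind \eqref{eq:cvg_cn}), and both deduce strong $H^1(\Gamma_\infty)$-convergence of the transported solutions. You identify the limit by passing to the limit in the weak Euler--Lagrange equation, the paper by a minimization sandwich (lower semicontinuity of the energy plus testing against an arbitrary competitor); these are equivalent, and your norm-convergence upgrade to strong $H^1$ is the same as the paper's.

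The genuine difference is your Step~2 and the final passage to the limit. The paper proves \emph{no} pointwise or $W^{2,p}$ bounds on $v_n$: once the strong $L^2(\Gamma_\infty)$ convergences of $v_n\circ\tau_n$ and of $(\nabla_{\Gamma_n}v_n)\circ\tau_n$ are in hand, it directly invokes a lower-semicontinuity theorem for integral functionals (the reference \cite{MR348582}) to conclude $\liminf_n F_2(\Omega_n)\geq F_2(\Omega_\infty)$. Your route --- uniform $W^{2,p}$ estimates via elliptic regularity for divergence-form operators with Lipschitz coefficients in a uniform $\mathscr{C}^{1,1}$ atlas, then Sobolev embedding and dominated convergence --- is heavier but self-contained, and it buys you more: sequential \emph{continuity} of $F_2$ rather than mere lower semicontinuity, and along the way it certifies that $F_2(\Omega)$ is finite for every $\Omega\in\Oo$ (a point the paper leaves to \cite{MR348582}). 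The price is precisely the chart-based regularity machinery the paper set out to bypass, as you yourself acknowledge.
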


\paragraph{Problems involving an elliptic PDE in a domain of $\R^d$.}
Finally, let us investigate the case of a shape criterion involving the solution of a PDE on a domain of $\R^d$.
We consider hereafter a Poisson equation with non-homogeneous boundary condition, but we claim that all conclusions can be easily extended to a larger class of elliptic PDEs.

Let $h\in L^2(D)$, $g\in H^2(D)$, and define $u_\Omega$ as the solution of
\begin{align}
  \label{eq:PDE_Omega}
  \left\{\begin{array}{ll}
    \Delta u_\Omega= h & \text{in $\Omega$},          \\
    u_\Omega= g        & \text{in $\partial \Omega$}.
  \end{array}\right.
\end{align}

Let us introduce the shape functional $F_3$ given by
$$
  F_3(\Omega)=\int_{\partial \Omega} j_3(x,\nu(x),u_{\Omega}(x),\nabla u_{\Omega}(x)) \, d\mu_{\partial \Omega}(x),
$$
where $j_3 : \R^d \times \mathcal{S}^{d-1} \times \R 
\times \R^d 
\to \R$ is continuous.
\begin{theorem}[\cite{dalphinExistenceOptimalShapes2020}, Theorem~2.1]
  \label{th:elliptic_inside}
  The shape functional  $F_3$ is lower semi-continuous for the $R$-convergence.
\end{theorem}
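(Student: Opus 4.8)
The plan is to follow the same scheme as for Theorem~\ref{th:elliptic_in_boundary}, the key point being to establish a suitable convergence of the state $u_{\Omega_n}$ to $u_{\Omega_\infty}$ along an $R$-converging sequence, and then pass to the limit in the surface integral defining $F_3$. Let $\Omega_n \xrightarrow{R} \Omega_\infty$. By \cref{lem:carac_Oo} the hypersurfaces $\partial\Omega_n$ have $\mathscr{C}^{1,1}$ regularity that is \emph{uniform} (the relevant Lipschitz constants of $\nabla b_{\Omega_n}$ depend only on $r_0$), and by \cref{lem:seq_continuity} both $\mathcal{H}^{d-1}(\partial\Omega_n)$ and $\mathcal{H}^d(\Omega_n)$ converge. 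First I would record the standard energy estimate for \eqref{eq:PDE_Omega}: writing $u_{\Omega_n} = g + w_n$ with $w_n \in H^1_0(\Omega_n)$, the variational formulation gives a bound on $\|w_n\|_{H^1_0(\Omega_n)}$ in terms of $\|h\|_{L^2(D)}$, $\|g\|_{H^2(D)}$ and the geometry (Poincaré constant on $\Omega_n$, controlled uniformly since $\Omega_n\subset D$ and the $\Omega_n$ satisfy a uniform ball condition, so in particular a uniform cone condition). Extending $w_n$ by $0$ to $D$, one gets a sequence bounded in $H^1_0(D)$, hence weakly convergent (up to subsequence) to some $w_\infty \in H^1_0(D)$.

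The heart of the argument is to identify $g + w_\infty$ with $u_{\Omega_\infty}$, i.e. to show $w_\infty$ vanishes a.e.\ outside $\Omega_\infty$, is in $H^1_0(\Omega_\infty)$, and solves the limit variational problem. This is exactly the place where uniform positive reach is used: the Hausdorff convergence of $\Omega_n$ to $\Omega_\infty$ together with the uniform ball condition yields convergence of characteristic functions $\chi_{\Omega_n}\to\chi_{\Omega_\infty}$ in $L^1(D)$ and, more importantly, a form of "stability of the Dirichlet condition" — no mass of $w_n$ concentrates near $\partial\Omega_n$ because each $\Omega_n$ has a uniform exterior ball, so points of $\partial\Omega_\infty$ are approached from outside by large balls lying outside $\Omega_n$, forcing $w_\infty = 0$ q.e.\ on $D\setminus\Omega_\infty$; this is the $\gamma$-convergence / Mosco-type convergence of the spaces $H^1_0(\Omega_n)$ under the uniform ball property (one can either invoke the general theory, exactly as in \cite{dalphinExistenceOptimalShapes2020}, or reprove it directly via the local graph description coming from $\nabla b_{\Omega_n}$). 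Having pinned down $w_\infty$, the convexity/linearity manipulations are not needed here since $F_3$ has no convexity hypothesis; instead I would upgrade weak $H^1$ convergence to something strong enough to pass to the limit in a boundary integral.

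For that last step, I would use the $R$-convergence to flatten the boundary integrals. Fix $r<r_0$; on the tubular neighborhood $U_r(\partial\Omega_\infty)$ the maps $b_{\Omega_n}$ converge in $\mathscr{C}^{1,\alpha}$ and weak-$\star$ in $W^{2,\infty}$, so the diffeomorphisms $x\mapsto x - b_{\Omega_n}(x)\nabla b_{\Omega_n}(x)$ projecting onto $\partial\Omega_n$, together with the coarea/change-of-variables Jacobians, converge uniformly; this lets one rewrite $F_3(\Omega_n)$ as an integral over the \emph{fixed} manifold $\partial\Omega_\infty$ (or over a fixed level set) of a composition involving $u_{\Omega_n}$, $\nabla u_{\Omega_n}$, $\nu_{\Omega_n}$ pulled back by these converging maps. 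By interior elliptic regularity $u_{\Omega_n}$ is bounded in $H^2_{\rm loc}$, and near $\partial\Omega_n$ one has $u_{\Omega_n}\to g$ with enough control (global $H^2$ bounds up to the boundary are available because the $\partial\Omega_n$ are uniformly $\mathscr{C}^{1,1}$ and $g\in H^2$), so $u_{\Omega_n}\to u_{\Omega_\infty}$ strongly in $H^1$ of a neighborhood of $\partial\Omega_\infty$ and the traces of $u_{\Omega_n}$ and $\nabla u_{\Omega_n}$ on (the pullback of) $\partial\Omega_\infty$ converge in $L^2$; continuity of $j_3$ and dominated convergence then give $F_3(\Omega_n)\to F_3(\Omega_\infty)$, which is stronger than the claimed lower semicontinuity. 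I expect the main obstacle to be precisely the uniform $H^2$ (or $H^{3/2}$ trace) bound up to the moving boundary and the Mosco-type identification of $w_\infty$: this is the technical core where the uniform ball property must be exploited carefully, and it is the part that, as the authors note, cannot really be shortened — one has to either cite the corresponding statement from \cite{dalphinExistenceOptimalShapes2020} or redo the local-chart estimates, now made uniform through the Lipschitz bounds on $\nabla b_{\Omega_n}$ from \cref{lem:carac_Oo}.
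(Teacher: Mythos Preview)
Your plan has the right overall shape but misses the key structural tool that makes the paper's argument short: a \emph{uniform $H^2$ extension operator} (Chenais, \cite{chenaisExistenceSolutionDomain1975}) from $H^2(\Omega)$ to $H^2(D)$ with norm bounded only by $r_0$ and $D$. The paper combines this with Dalphin's uniform $H^2$ elliptic estimate to get $\|E_{\Omega_n}(u_{\Omega_n})\|_{H^2(D)}\le C\|h\|_{L^2(D)}$, and then the compact embedding $H^2(D)\hookrightarrow H^1(D)$ immediately gives \emph{strong} $H^1(D)$ convergence of a subsequence. This kills two of your intermediate steps at once: (i) identification of the limit is done by testing against $\varphi\in \mathscr{C}^\infty_c$ supported in a compact $K\subset\operatorname{int}\Omega_\infty$ (no Mosco/$\gamma$-convergence machinery needed), and (ii) you already have strong $H^1$ convergence on all of $D$, so trace convergence of $u$ and $\nabla u$ follows from a uniform trace lemma (the paper proves one, \cref{lem:uniform_trace}, via the extruded-surface trick).

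The genuine gap in your plan is the sentence ``so $u_{\Omega_n}\to u_{\Omega_\infty}$ strongly in $H^1$ of a neighborhood of $\partial\Omega_\infty$''. Extending $w_n$ by zero gives only $H^1(D)$ bounds, hence only weak $H^1$ and strong $L^2$ convergence; this is not enough to control traces of $\nabla u_{\Omega_n}$ on a moving boundary. Knowing that each $u_{\Omega_n}$ is bounded in $H^2(\Omega_n)$ does not help as stated, because the domains $\Omega_n$ vary and there is no common function space in which to extract compactness --- that is precisely what the $H^2$ extension operator provides. Your Mosco/$\gamma$-convergence route would correctly identify the $H^1$ weak limit, but it is heavier than necessary and still leaves you without the gradient-trace convergence you need for $F_3$. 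Once you insert the Chenais extension and the uniform $H^2$ bound, the rest of your plan (either the $\tau_n$-pullback you propose, or the tubular/volumic rewriting the paper actually uses) goes through, and in fact yields continuity of $F_3$, not just lower semicontinuity.
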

We mention this theorem demonstrated in \cite{dalphinExistenceOptimalShapes2020}. Nevertheless, it is notable that by adapting the proof of Theorem~\ref{th:elliptic_in_boundary}, it is possible to obtain a much shorter proof of this theorem. In order not to make this article unnecessarily heavy, we only give the main steps of the proof in Section~\ref{sec:proof:th:elliptic_inside}.
This example is mentioned 
both for the sake of completeness, in order to review the existing literature, 
and also to underline the potential of the approach introduced here, which allows 
to find 
more direct proofs of
all the known results and to extend them. 

In addition, 
it is interesting to notice that our 
approach allows to deal with problems involving PDEs both using weak 
 formulations as in \eqref{eq:PDE_Omega} and also 
whose solutions are obtained using a minimization principle, as is the case in \eqref{PDEmanifold}. The approach thus seems robust and we believe that it can be easily adapted to general families of problems (for example to a general non-degenerate elliptic PDE).


\section{Proofs}\label{sec:proofsTotal}

\subsection{The extruded surface approach}
\label{sec:extruded_surf}

One of the key ideas to prove sequential continuity of functionals involving an integral on the boundary is to approximate such an integral by an integral on a small tubular neighborhood (as done e.g. in \cite{delfourTangentialDifferentialCalculus2000}).

Let us first illustrate the method by proving Point~\ref{lem_prop:bounded_area}
of \cref{lem:carac_Oo}.

\subsubsection{Proof of \Cref{lem:carac_Oo}, Point \ref{lem_prop:bounded_area}}
\label{subsubseq:proof_lem_prop:bounded_area}

For $0<h <r_0$, consider
\begin{equation}
  T:
  \begin{array}[t]{rcl}
    (-h,h) \times \partial \Omega & \to     & U_h(\partial \Omega)       \\
    (t,x)                         & \mapsto & x+t \nabla b_{ \Omega}(x).
  \end{array}
\end{equation}
Since $T$ is Lipschitz continuous, it is  differentiable at almost every $(t_0,x_0)$, with
\begin{align}
  \label{eq:dT}
  d_{(t_0,x_0)}T(s,y)=y+s \nabla b_{ \Omega}(x_0) + t_0 d_{x_0}\nabla b_{ \Omega}(y), \qquad \forall (s,y) \in \R \times T_{x_0} \partial \Omega.
\end{align}

\begin{remark}
  \label{rmk:identification_tangent_plan}
  Note that as $\nabla b_{ \Omega}(x_0)$ is a normal unit vector to $\partial \Omega$ at $x_0$, we can identify the tangent hyperplane $T_{x_0} \partial \Omega$ with $\R^{d-1}$ endowed with an Euclidean structure inherited from that of $\R^d$. We will use this identification several times in this paper.
\end{remark}
As a result, we can identify $ \R \times T_{x_0} \partial \Omega\ni (s,y) \mapsto y+s \nabla b_{ \Omega}(x_0)$ with an orthogonal matrix.
Moreover, up to the choice of a different orientation on $T_{x_0} \partial \Omega$,
such a matrix belongs to the special orthogonal group ${\rm SO}(n)$.
We use the same coordinate representation to identify $\R \times T_{x_0} \partial \Omega\ni (s,y) \mapsto d_{x_0}\nabla b_{ \Omega}(y)$ with a $n\times n$ matrix.
%
By uniform continuity of the determinant 
around ${\rm SO}(d)
$, there exists $C_0>0$ such that, for every $M\in {\rm SO}(d)$ and every $l\in M_d(\R)$ such that $\|l\|\leq C_0$,
\[\frac12\le \det (M +l) \le \frac32.\]
As $\nabla b_\Omega$ is $\frac{2}{r_0}$-Lipschitz continuous on $\partial \Omega$, we have that for almost every $x_0\in \partial \Omega$ and every $t_0\in\R$, $\|t_0 d_{x_0}\nabla b_{ \Omega}\| \leq \frac{2 |t_0|}{r_0}$.

Let us fix $h<\min(r_0,r_0 C_0 /2)$ (independent of $\Omega$), so that
$\|t_0 d_{x_0}\nabla b_{ \Omega}\| \leq C_0$ for almost every $x_0\in \partial \Omega$ and every $t_0\in (-h,h)$. By the change of variable formula we then have
$$\mathcal{H}^{d-1}(\partial \Omega)=\int_{\partial \Omega} d\mu_{\partial \Omega}= \frac{1}{2h}\int_{U_h(\partial \Omega)} \det (d_{T^{-1}(y)}T) dy
  \leq \frac{3}{4h} \mathcal{H}^{d}(U_h(D)),$$
whence the conclusion.

\subsubsection{Extruded surface and $R$-convergence}
\label{subsec:extruded_surf}

Let us now illustrate the power of this approach in the case of a $R$-converging sequence.

Let $\Omega_n \xrightarrow{R} \Omega_\infty$. From now on, we will use the notation $\Gamma_n \coloneqq \partial \Omega_n$ for the hypersurfaces.

For $h<r_0$ and $n \in \overline{\N}$, let us define a parametrization of a neighborhood of $\Gamma_n$ by
\begin{equation}
  T_n:
  \begin{array}[t]{rcl}
    (-h,h) \times \Gamma_n & \to     & U_h(\Gamma_n)         \\
    (t,x)                           & \mapsto & x+t \nabla b_{ \Omega_{n}}(x).
  \end{array}
\end{equation}

\begin{lemma}
  \label{lem:det_bounded}
  For every $\eps>0$, there exists $h>0$ such that for all $n \in  \overline{\N}$,
  \begin{align*}
    1-\eps \leq \det (d_{(t_0,x_0)}T_n)\leq 1+\eps,\quad \text{for a.e. }(t_0,x_0)\in
    (-h,h)
    \times \Gamma_n.
  \end{align*}
\end{lemma}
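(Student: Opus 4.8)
The plan is to compute the differential of $T_n$ explicitly and estimate its determinant uniformly in $n$, using the identification of the tangent plane $T_{x_0}\Gamma_n$ with $\R^{d-1}$ as in Remark~\ref{rmk:identification_tangent_plan}. Just as in \eqref{eq:dT}, since $T_n$ is Lipschitz it is differentiable at almost every $(t_0,x_0)$ with
\[
d_{(t_0,x_0)}T_n(s,y) = y + s\,\nabla b_{\Omega_n}(x_0) + t_0\, d_{x_0}\nabla b_{\Omega_n}(y), \qquad (s,y)\in \R\times T_{x_0}\Gamma_n.
\]
Exactly as in Section~\ref{subsubseq:proof_lem_prop:bounded_area}, the map $(s,y)\mapsto y + s\,\nabla b_{\Omega_n}(x_0)$ is (after possibly reorienting $T_{x_0}\Gamma_n$) an element $M$ of ${\rm SO}(d)$, and the map $(s,y)\mapsto t_0\, d_{x_0}\nabla b_{\Omega_n}(y)$ is a matrix $l$ with $\|l\| \leq \frac{2|t_0|}{r_0}$, by Point~\ref{lem_prop:Lipschitz_surface} of \cref{lem:carac_Oo} (the restriction of $\nabla b_{\Omega_n}$ to $\Gamma_n$ is $\frac{1}{r_0}$-Lipschitz; even the cruder $\frac{2}{r_0}$ bound suffices). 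The key point is that all these bounds are \emph{independent of $n$}: the Lipschitz constant $\frac{1}{r_0}$ (or $\frac{2}{r_0}$) comes only from the reach lower bound $r_0$, which is common to all $\Omega_n\in\Oo$.

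Next I would invoke uniform continuity of $\det$ near ${\rm SO}(d)$: given $\eps>0$, there exists $\delta>0$ such that $|\det(M+l)-1|\leq \eps$ whenever $M\in{\rm SO}(d)$ and $\|l\|\leq\delta$. Choosing $h$ so that $\frac{2h}{r_0}\leq\delta$, i.e. $h \leq \min\bigl(r_0,\, \frac{r_0\delta}{2}\bigr)$, ensures that for every $n\in\overline{\N}$ and a.e. $(t_0,x_0)\in(-h,h)\times\Gamma_n$ we have $\|l\| = \|t_0\,d_{x_0}\nabla b_{\Omega_n}\| \leq \frac{2h}{r_0}\leq\delta$, hence $1-\eps\leq\det(d_{(t_0,x_0)}T_n)\leq 1+\eps$. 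Since $h$ depends only on $\eps$, $d$, and $r_0$, this is the desired uniform bound.

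The only genuine subtlety — rather than an obstacle — is the almost-everywhere differentiability and the validity of the formula for $d_{(t_0,x_0)}T_n$ on a $\mathscr{C}^{1,1}$ (not $\mathscr{C}^2$) hypersurface: one must check that $\nabla b_{\Omega_n}$ restricted to $\Gamma_n$ is Lipschitz (Point~\ref{lem_prop:Lipschitz_surface}), hence differentiable $\mathcal{H}^{d-1}$-a.e. on $\Gamma_n$ by Rademacher's theorem, and that the product structure of $(-h,h)\times\Gamma_n$ makes $T_n$ differentiable a.e. with the stated differential; this is precisely the content already used in \eqref{eq:dT}, so no new difficulty arises. The emphasis throughout is that the one free parameter $h$ can be chosen uniformly over the whole family $\{\Omega_n\}\subset\Oo$ because every constant entering the estimate is controlled by $r_0$ alone.
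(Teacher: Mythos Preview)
Your proposal is correct and follows essentially the same argument as the paper: both use the expression \eqref{eq:dT} for $d_{(t_0,x_0)}T_n$, the uniform Lipschitz bound on $\nabla b_{\Omega_n}|_{\Gamma_n}$ coming from the reach constraint, and the uniform continuity of the determinant near ${\rm SO}(d)$ to choose $h$ independently of $n$. Your discussion of the a.e.\ differentiability issue is a welcome clarification but introduces no new ingredient beyond what the paper already uses in Section~\ref{subsubseq:proof_lem_prop:bounded_area}.
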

\begin{proof}
  We follow the same argument as in Section~\ref{subsubseq:proof_lem_prop:bounded_area}. Namely, for a given $\eps>0$, there exists $C_0>0$ such that for every $M\in {\rm SO}(d)$ and every $l\in M_d(\R)$ such that $\|l\|\leq C_0$,
  \[ 1-\eps \leq \det (M +l) \leq 1+\eps.\]
  Let us fix $h<\min(r_0,r_0 C_0 /2)$ (independent of $n$). As $\nabla b_{\Omega_n}$ is $\frac{2}{r_0}$-Lipschitz continuous on $\Gamma_n$, we get
  $\|t_0 d_{x_0}\nabla b_{ \Omega_n}\| \leq C_0$ for almost every $x_0\in \Gamma$ and every $t_0\in (-h,h)$. Whence, using  Equation~\eqref{eq:dT} with the previous estimate, we conclude the proof.
\end{proof}

\begin{remark}
  In what follows, we will use the Bachmann--Landau notation $\operatorname{o}_{h\to 0}(1)$
   for a function 
  converging to $0$ in $L^\infty$ as $h$ goes to $0$ and for a given $n$, large enough. For example, \cref{lem:det_bounded} 
  implies that
  $$\det (dT_n)=1 + {\operatorname{o}}_{h\to 0}(1),\qquad \mbox{on }(-h,h)\times \Gamma_n,$$
  which means
  $\forall \eps>0, \exists N_0\in \N,\, \exists h>0, \forall n \in \bar{\N}, \, n\geq N_0$ implies
  $$ \left| \det (d_{(t_0,x_0)}T_n) -1 \right| \leq \eps,\qquad \mbox{for a.e. } (t,x)\in (-h,h)\times \Gamma_n.$$
\end{remark}

Let us now introduce the orthogonal projection  $p_n$ onto $\Gamma_n$, defined on $U_h(\Gamma_n)$ for every $h\in (0,r_0)$.
\begin{lemma}
  The following properties hold:
  \label{lem:prop_p}
  \begin{enumerate}
    \item \label{lem:prop_p_inverse} $p_n$ 
    coincides with the 
    second component of $T_n^{-1}:U_h(\Gamma_n)\to (-h,h)\times \Gamma_n$.
    \item \label{lem:prop_p_proj} For all $x\in U_h(\Gamma_n)$, $p_n(x)=x-b_{\Omega_n}(x) \nabla b_{\Omega_n}(x)$.
    \item \label{lem:prop_p_convergence}
          $p_n$ converges toward $p_\infty$ in $L^\infty(U_h(\Gamma_\infty))$.
  \end{enumerate}
\end{lemma}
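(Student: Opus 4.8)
The plan is to establish the three claims in order, as the first two are essentially bookkeeping and the third contains the only analytic content. For Point~\ref{lem:prop_p_inverse}, I would argue that $T_n$ is a bijection from $(-h,h)\times\Gamma_n$ onto $U_h(\Gamma_n)$ for $h<r_0$: this follows from the uniform ball condition (equivalently $\operatorname{Reach}(\Gamma_n)\ge r_0$), which guarantees that every point in the tubular neighborhood has a unique nearest point on $\Gamma_n$ and that the normal segments through distinct points of $\Gamma_n$ do not cross within distance $r_0$. Given $y\in U_h(\Gamma_n)$ with $T_n^{-1}(y)=(t,x)$, by definition $y=x+t\nabla b_{\Omega_n}(x)$ with $x\in\Gamma_n$, so $x$ is the foot of the normal through $y$; uniqueness of the nearest-point projection then forces $x=p_n(y)$. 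Hence $p_n$ is exactly the second component of $T_n^{-1}$.

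For Point~\ref{lem:prop_p_proj}, I would start from $y=p_n(y)+t\,\nabla b_{\Omega_n}(p_n(y))$ with $|t|=d_{\Gamma_n}(y)$, and identify the signed coefficient $t$ with $b_{\Omega_n}(y)$. Indeed, by Lemma~\ref{lem:carac_Oo}, Point~\ref{lem_prop:normal}, $\nabla b_{\Omega_n}$ is the outward unit normal on $\Gamma_n$, and a standard property of the signed distance in the tubular neighborhood of a set of positive reach is that $b_{\Omega_n}$ is differentiable on $U_h(\Gamma_n)$ with $\nabla b_{\Omega_n}(y)=\nabla b_{\Omega_n}(p_n(y))$ and $b_{\Omega_n}(y)=b_{\Omega_n}(p_n(y))+ (y-p_n(y))\cdot\nabla b_{\Omega_n}(p_n(y)) = t$, since $b_{\Omega_n}$ vanishes on $\Gamma_n$ and grows linearly along the normal. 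Rearranging $y=p_n(y)+b_{\Omega_n}(y)\nabla b_{\Omega_n}(p_n(y))=p_n(y)+b_{\Omega_n}(y)\nabla b_{\Omega_n}(y)$ gives the formula $p_n(x)=x-b_{\Omega_n}(x)\nabla b_{\Omega_n}(x)$ for all $x\in U_h(\Gamma_n)$.

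Point~\ref{lem:prop_p_convergence} is the crux and should be deduced from Point~\ref{lem:prop_p_proj} together with the $R$-convergence. Writing $p_n(x)=x-b_{\Omega_n}(x)\nabla b_{\Omega_n}(x)$ and $p_\infty(x)=x-b_{\Omega_\infty}(x)\nabla b_{\Omega_\infty}(x)$ on $U_h(\Gamma_\infty)$ for some fixed $h<r_0$, I would estimate
\[
\|p_n-p_\infty\|_{L^\infty(U_h(\Gamma_\infty))}
\le \|b_{\Omega_n}-b_{\Omega_\infty}\|_{L^\infty}\,\|\nabla b_{\Omega_n}\|_{L^\infty}
+\|b_{\Omega_\infty}\|_{L^\infty}\,\|\nabla b_{\Omega_n}-\nabla b_{\Omega_\infty}\|_{L^\infty(U_h(\Gamma_\infty))}.
\]
The first term goes to $0$ because $b_{\Omega_n}\to b_{\Omega_\infty}$ uniformly on $\overline D$ (first line of the definition of $R$-convergence) and $\|\nabla b_{\Omega_n}\|_{L^\infty}=1$ on each $\Gamma_n$, bounded by a constant on $U_h(\Gamma_n)$ by Lemma~\ref{lem:carac_Oo}, Point~\ref{lem_prop:Lipschitz}. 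The second term goes to $0$ because $b_{\Omega_n}\to b_{\Omega_\infty}$ in $\mathscr{C}^{1,\alpha}(U_r(\Gamma_\infty))$ for all $r<r_0$ (second line of the definition), so in particular $\nabla b_{\Omega_n}\to\nabla b_{\Omega_\infty}$ uniformly on $U_h(\Gamma_\infty)$. The one subtlety worth flagging is that $p_n$ is a priori only defined on $U_h(\Gamma_n)$, not on $U_h(\Gamma_\infty)$; however, since $b_{\Omega_n}\to b_{\Omega_\infty}$ uniformly, for $n$ large one has $U_h(\Gamma_\infty)\subset U_{h'}(\Gamma_n)$ for some $h<h'<r_0$, so the formula of Point~\ref{lem:prop_p_proj} is valid on $U_h(\Gamma_\infty)$ for $n$ large and the estimate makes sense. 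The main obstacle is thus not any deep difficulty but rather this domain-matching point, which must be handled cleanly before the triangle-inequality estimate above can be invoked.
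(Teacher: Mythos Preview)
Your proposal is correct and follows essentially the same approach as the paper: for Point~\ref{lem:prop_p_convergence} you use the explicit formula $p_n(x)=x-b_{\Omega_n}(x)\nabla b_{\Omega_n}(x)$ together with the $\mathscr{C}^1$ convergence of $b_{\Omega_n}$, which is exactly what the paper does (in one line). The only difference is scope: the paper dispatches Points~\ref{lem:prop_p_inverse} and~\ref{lem:prop_p_proj} by citing \cite[Theorem~7.2, Chapter~7]{delfourShapesGeometries2011}, whereas you supply the direct arguments; your version is more self-contained, and your explicit handling of the domain-matching issue (that $U_h(\Gamma_\infty)\subset U_{h'}(\Gamma_n)$ for $n$ large) anticipates what the paper formalizes later as Lemma~\ref{lem:inclusions}. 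One cosmetic remark: since $b_{\Omega_n}$ is a signed distance, $|\nabla b_{\Omega_n}|=1$ wherever it is differentiable, so you need not invoke the Lipschitz bound to control $\|\nabla b_{\Omega_n}\|_{L^\infty}$.
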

\begin{proof}
  \Cref{lem:prop_p_inverse,lem:prop_p_proj} are obviously equivalent and are proved in \cite[Theorem~7.2, Chapter~7]{delfourShapesGeometries2011}. \Cref{lem:prop_p_convergence} follows from the $\mathscr{C}^1$ convergence of $b_{\Omega_n}$ toward $b_{\Omega_\infty}$.
\end{proof}
We can now state the key equality to relate surface and volume integrals. 
Apply Lemma~\ref{lem:det_bounded} with $\eps\in (0,1)$ to select $h>0$ such that $T_n:(-h,h)\times \Gamma_n\to U_h(\Gamma_n)$ is invertible for every $n\in \overline{\N}$\footnote{It is actually well-known that the domain of invertibility of $T_n$ contains $U_{r_0}(\Gamma_n)$.}.
\begin{lemma}
  \label{lem:integral_volume_surface}
  For all $n\in \overline{\N}$, $f\in L^1(\Gamma_n)$, and $t \in (0,h)$ we have
  \begin{align*}
    \int_{\Gamma_n} f(x)\, d\mu_{\Gamma_n}(x)= \frac{1}{2t}\int_{U_t(\Gamma_n)} f\circ p_n(y) \,  \det(d_{T_n^{-1}(y)}T_n) \, dy.
  \end{align*}
\end{lemma}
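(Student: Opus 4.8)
The plan is to prove the identity by first establishing it in the ``slab'' form and then using Fubini and the coarea-type change of variables provided by $T_n$. First I would note that $T_n$ is a bi-Lipschitz homeomorphism from $(-h,h)\times\Gamma_n$ onto $U_h(\Gamma_n)$ (invertibility guaranteed by the choice of $h$ via \cref{lem:det_bounded}), so the change-of-variables formula for Lipschitz maps applies: for any $g\in L^1(U_h(\Gamma_n))$,
\begin{equation*}
  \int_{U_h(\Gamma_n)} g(y)\,dy = \int_{(-h,h)\times\Gamma_n} g(T_n(t,x))\,\bigl|\det(d_{(t,x)}T_n)\bigr|\,d(t,x),
\end{equation*}
where the product measure on the source is $\mathcal{L}^1\otimes\mu_{\Gamma_n}$. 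The determinant is positive a.e.\ by \cref{lem:det_bounded} (since $\eps\in(0,1)$), so the absolute value is harmless; and by \cref{lem:prop_p}, Point~\ref{lem:prop_p_inverse}, the second component of $T_n^{-1}$ is exactly $p_n$, so for $y=T_n(t,x)$ one has $p_n(y)=x$.

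Next I would apply this with $g(y) = \mathds{1}_{U_t(\Gamma_n)}(y)\, f(p_n(y))\,\det(d_{T_n^{-1}(y)}T_n)$ for a fixed $t\in(0,h)$. Since $U_t(\Gamma_n) = T_n\bigl((-t,t)\times\Gamma_n\bigr)$ (the orthogonal projection realizes $d_{\Gamma_n}(y)=|b_{\Omega_n}(y)|$ on the tubular neighborhood, by Point~\ref{lem:prop_p_proj}), the indicator pulls back to $\mathds{1}_{(-t,t)}(s)$, and the two determinant factors combine: $\det(d_{T_n^{-1}(T_n(s,x))}T_n) = \det(d_{(s,x)}T_n)$, which then cancels against the Jacobian weight $\det(d_{(s,x)}T_n)$ coming from the change of variables. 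Hence
\begin{equation*}
  \int_{U_t(\Gamma_n)} f(p_n(y))\,\det(d_{T_n^{-1}(y)}T_n)\,dy = \int_{(-t,t)\times\Gamma_n} f(x)\,d(s,x) = \int_{-t}^{t}\!\!\int_{\Gamma_n} f(x)\,d\mu_{\Gamma_n}(x)\,ds = 2t\int_{\Gamma_n} f\,d\mu_{\Gamma_n},
\end{equation*}
using Fubini (legitimate since $f\in L^1(\Gamma_n)$ and the integrand is independent of $s$). Dividing by $2t$ gives the claimed equality.

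The only genuinely delicate point is the measure-theoretic bookkeeping: one must justify that the product measure $\mathcal{L}^1\otimes\mu_{\Gamma_n}$ on $(-h,h)\times\Gamma_n$ is the correct one against which the Jacobian of $T_n$ is computed, i.e.\ that the differential $d_{(t_0,x_0)}T_n$ in \eqref{eq:dT} is expressed in orthonormal frames of $\R\times T_{x_0}\Gamma_n$ (via the identification of \cref{rmk:identification_tangent_plan}) and of $\R^d$, so that its determinant is the genuine Jacobian factor relating $d(t,x)=dt\,d\mu_{\Gamma_n}(x)$ to Lebesgue measure $dy$. This is exactly the setup already used in Section~\ref{subsubseq:proof_lem_prop:bounded_area}, so it can be invoked directly. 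One should also remark that all the relevant sets have the required measurability ($\Gamma_n$ being $\mathscr{C}^{1,1}$, hence $\mu_{\Gamma_n}=\mathcal{H}^{d-1}\lfloor\Gamma_n$ is a Radon measure, and $T_n$, $p_n$, $b_{\Omega_n}$ are Lipschitz), which makes every integral above well-defined.
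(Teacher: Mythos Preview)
Your overall strategy --- pull the volume integral on $U_t(\Gamma_n)$ back to $(-t,t)\times\Gamma_n$ via $T_n$ and apply Fubini --- is exactly what the paper does (its proof is a single line invoking the area formula). The measure-theoretic justification you sketch at the end is sound.

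The error is in the ``cancellation'' step. With your choice $g(y)=\mathds{1}_{U_t(\Gamma_n)}(y)\,f(p_n(y))\,\det(d_{T_n^{-1}(y)}T_n)$, the pullback $g(T_n(s,x))$ already carries a factor $\det(d_{(s,x)}T_n)$; the change-of-variables formula then \emph{multiplies} by another $|\det(d_{(s,x)}T_n)|$, so what you actually obtain is
\[
\int_{U_t(\Gamma_n)} f(p_n(y))\,\det\bigl(d_{T_n^{-1}(y)}T_n\bigr)\,dy
\;=\;\int_{-t}^{t}\!\int_{\Gamma_n} f(x)\,\bigl(\det d_{(s,x)}T_n\bigr)^{2}\,d\mu_{\Gamma_n}(x)\,ds,
\]
not $2t\int_{\Gamma_n}f$. (For the unit circle in $\R^2$ with $f\equiv 1$ one has $\det dT=1+s$, and the right-hand side of the stated identity evaluates to $2\pi(1+t^2/3)\neq 2\pi$.) What your computation really proves is the identity with weight $\det(d_yT_n^{-1})=\bigl(\det d_{T_n^{-1}(y)}T_n\bigr)^{-1}$; with that weight the two determinants do cancel and the argument goes through verbatim. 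In other words, the statement itself carries the Jacobian on the wrong side; since $\det(dT_n)=1+\smallo_{h\to 0}(1)$ by \cref{lem:det_bounded} this slip is harmless for the later uses in the paper, but your argument as written does not establish the formula as stated.
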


\begin{proof}
  Using  the change of variable formula (also known as area formula for Lipschitz continuous functions), one gets
  \begin{align*}
    \int_{-t}^\top  \int_{\Gamma_n} f(x)\, d\mu_{\Gamma_n}(x) dt= \int_{U_t(\Gamma_n)}f\circ p_n(y)\det(d_{T_n^{-1}(y)}T_n) \, dy.
  \end{align*}
\end{proof}

From now on, we will 
the $T_n^{-1}(y)$ inside the determinant to improve the readability.

\begin{lemma}
  \label{lem:inclusions}
  For every $h\leq r_0/2$ and $0<t<h$, there exists $N_0$ such that
  $$\forall n \geq N_0, \quad U_{h-t}(\Gamma_\infty)\subset U_{h}(\Gamma _n) \subset U_{h+t}(\Gamma_\infty).$$
\end{lemma}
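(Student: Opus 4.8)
The plan is to deduce both inclusions from the uniform convergence $b_{\Omega_n}\to b_{\Omega_\infty}$ furnished by the first item in the definition of $R$-convergence, after rewriting the tubular neighborhoods of the hypersurfaces as sublevel sets of the (unsigned) distance functions. First I would record the elementary identity
\[ d_{\Gamma_n}(x)=|b_{\Omega_n}(x)|\qquad\text{for all }x\in\R^d\text{ and all }n\in\overline{\N}. \]
This holds because each $\Omega_n$ is closed and regular closed, so that $\overline{\R^d\setminus\Omega_n}=(\R^d\setminus\Omega_n)\cup\Gamma_n$ (a consequence of the uniform ball condition satisfied by elements of $\Oo$): for $x$ in the interior of $\Omega_n$ every segment from $x$ to the complement crosses $\Gamma_n$, which gives $d_{\Gamma_n}(x)=d_{\R^d\setminus\Omega_n}(x)=-b_{\Omega_n}(x)$, and the cases $x\in\Gamma_n$ and $x\notin\Omega_n$ are analogous. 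Consequently $U_h(\Gamma_n)=\{x\in\R^d:|b_{\Omega_n}(x)|\le h\}$ for every $n\in\overline{\N}$ and every $h>0$.

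Next I would localize to a fixed compact set and invoke uniform convergence. Set $K:=\{x\in\R^d:d_D(x)\le r_0\}$, which is compact and, since $\Gamma_n\subset D$ and $h+t<r_0$ (because $t<h\le r_0/2$), contains all the tubular neighborhoods $U_{h-t}(\Gamma_\infty)$, $U_h(\Gamma_n)$ and $U_{h+t}(\Gamma_\infty)$ appearing in the statement. On $K$ one has $b_{\Omega_n}\to b_{\Omega_\infty}$ uniformly: on $\overline D$ this is the first item of the definition of $R$-convergence, while outside $D$ every $b_{\Omega_n}$ coincides with the distance function $d_{\Omega_n}$ and $|d_{\Omega_n}-d_{\Omega_\infty}|\le d_{\mathrm H}(\Omega_n,\Omega_\infty)$ on all of $\R^d$, the Hausdorff distance $d_{\mathrm H}(\Omega_n,\Omega_\infty)$ tending to $0$ as a consequence of the uniform convergence of the $b_{\Omega_n}$ on $\overline D$. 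Fix $N_0$ so that $\|b_{\Omega_n}-b_{\Omega_\infty}\|_{L^\infty(K)}<t$ for every $n\ge N_0$. Then, for $n\ge N_0$: if $x\in U_{h-t}(\Gamma_\infty)$ we get $|b_{\Omega_n}(x)|\le|b_{\Omega_\infty}(x)|+\|b_{\Omega_n}-b_{\Omega_\infty}\|_{L^\infty(K)}<(h-t)+t=h$, hence $x\in U_h(\Gamma_n)$; and if $x\in U_h(\Gamma_n)$ we get $|b_{\Omega_\infty}(x)|\le|b_{\Omega_n}(x)|+\|b_{\Omega_n}-b_{\Omega_\infty}\|_{L^\infty(K)}<h+t$, hence $x\in U_{h+t}(\Gamma_\infty)$. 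This establishes both inclusions.

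The only point requiring some care is the second step: promoting the $\mathscr{C}(\overline D)$-convergence of the signed distance functions to uniform convergence on the slightly larger compact set $K$ that actually contains the tubular neighborhoods under consideration. Once this is settled, the lemma reduces to the two triangle-inequality estimates above, which are otherwise routine.
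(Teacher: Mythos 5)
Your proof is correct and follows essentially the same route as the paper, which simply observes that uniform convergence of $b_{\Omega_n}$ to $b_{\Omega_\infty}$ forces the nesting $b_{\Omega_\infty}^{-1}((t-h,h-t))\subset b_{\Omega_n}^{-1}((-h,h))\subset b_{\Omega_\infty}^{-1}((-h-t,h+t))$. You merely make explicit two points the paper leaves implicit — the identity $U_h(\Gamma)=\{|b_\Omega|\le h\}$ and the extension of the uniform convergence from $\overline D$ to a compact set actually containing the tubes — both of which are handled correctly.
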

\begin{proof}
  By uniform convergence of $b_{\Omega_n}$ toward $b_{\Omega_\infty}$, we have that for $n$ large enough
  $$b_{\Omega_\infty}^{-1}((t-h, h-t)) \subset b_{\Omega_n}^{-1}((-h, h)) \subset b_{\Omega_\infty}^{-1}((-h-t, h+t)).
  $$
\end{proof}

In order to perform changes of variable in surface integrals, it is convenient to use directly $p_n$ as a way to map $\Gamma_\infty$ onto $\Gamma_n$. To this aim, we define
\begin{equation}
  \tau_n:
  \begin{array}[t]{rcl}
    \Gamma_\infty & \to     & \Gamma_n         \\
    x                           & \mapsto & p_n(x).
  \end{array}
\end{equation}
Note that for $n$ large enough, \cref{lem:inclusions} ensure that $\tau_n$ is well-defined. We also introduce $\operatorname{Jac}(\tau_n)$ to denote the Jacobian of $\tau_n$. Then we have the following lemma.
\begin{lemma}
  \label{lem:p_n_det}
  For $n$ large enough, $\tau_n: \Gamma_\infty \to \Gamma_n$ is a diffeomorphism. Besides
  \begin{align}
    \label{eq:limit_det}
     \sup_{x\in \Gamma_\infty} \left|\operatorname{Jac}(\tau_n)(x)-1 \right| \xrightarrow[n \to \infty]{}0.
  \end{align}
\end{lemma}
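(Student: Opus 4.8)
The plan is to establish the two claims of \cref{lem:p_n_det} in sequence: first that $\tau_n$ is a diffeomorphism for $n$ large, and then the uniform convergence of its Jacobian to $1$. Both will follow by relating $\tau_n$ to the parametrization maps $T_n$ and $T_\infty$ and exploiting the already-established $R$-convergence of $b_{\Omega_n}$.

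\emph{Step 1: $\tau_n$ is a diffeomorphism.} By \cref{lem:prop_p}, Point~\ref{lem:prop_p_proj}, we have $\tau_n(x)=p_n(x)=x-b_{\Omega_n}(x)\nabla b_{\Omega_n}(x)$ for $x\in\Gamma_\infty$, which is well-defined for $n$ large by \cref{lem:inclusions}. Since $b_{\Omega_n}\to b_{\Omega_\infty}$ in $\mathscr{C}^{1,\alpha}(U_r(\Gamma_\infty))$ and weakly-$\star$ in $W^{2,\infty}(U_r(\Gamma_\infty))$, the map $\tau_n$ is $\mathscr{C}^{1,1}$ on $\Gamma_\infty$, and $\tau_n \to \mathrm{Id}_{\Gamma_\infty}$ in $\mathscr{C}^1(\Gamma_\infty)$: indeed on $\Gamma_\infty$ one has $b_{\Omega_\infty}=0$, so $\tau_\infty=\mathrm{Id}$, and $\tau_n-\tau_\infty = -b_{\Omega_n}\nabla b_{\Omega_n} + b_{\Omega_\infty}\nabla b_{\Omega_\infty}\to 0$ in $\mathscr{C}^1$. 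Hence for $n$ large the tangential differential $d_x\tau_n$ is, uniformly in $x\in\Gamma_\infty$, a small perturbation of the identity on $T_x\Gamma_\infty$ (composed with the inclusion into $T_{\tau_n(x)}\Gamma_n$), hence invertible, so $\tau_n$ is a local diffeomorphism; being a $\mathscr{C}^1$-small perturbation of the identity between the compact manifolds $\Gamma_\infty$ and $\Gamma_n$ (and using that, by \cref{lem:inclusions}, $\Gamma_n$ lies in a thin tube around $\Gamma_\infty$ on which the nearest-point projection $p_\infty$ is a well-defined inverse of the normal flow), it is a global diffeomorphism. Concretely, one can invert $\tau_n$ by $\tau_n^{-1}=p_\infty|_{\Gamma_n}$ up to the identification given by $T_\infty$, since $p_\infty\circ p_n$ converges uniformly to the identity by Point~\ref{lem:prop_p_convergence} of \cref{lem:prop_p}.

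\emph{Step 2: convergence of the Jacobian.} The Jacobian of $\tau_n$ at $x\in\Gamma_\infty$ is $\operatorname{Jac}(\tau_n)(x)=\sqrt{\det\!\big((d_x\tau_n)^\top d_x\tau_n\big)}$, where $d_x\tau_n: T_x\Gamma_\infty \to T_{\tau_n(x)}\Gamma_n \subset \R^d$. Writing $\tau_n(x)=x-b_{\Omega_n}(x)\nabla b_{\Omega_n}(x)$ and differentiating tangentially, $d_x\tau_n = \mathrm{Id}_{T_x\Gamma_\infty} - \nabla_\Gamma b_{\Omega_n}(x)\otimes\nabla b_{\Omega_n}(x) - b_{\Omega_n}(x)\,d_x(\nabla b_{\Omega_n})$; on $\Gamma_\infty$ the term $b_{\Omega_n}(x)$ is $O(\|b_{\Omega_n}-b_{\Omega_\infty}\|_\infty)=\smallo_{}(1)$ while $d_x(\nabla b_{\Omega_n})$ is bounded by Point~\ref{lem_prop:Lipschitz} of \cref{lem:carac_Oo}, and $\nabla_\Gamma b_{\Omega_n}(x)\otimes\nabla b_{\Omega_n}(x)\to 0$ uniformly because $\nabla b_{\Omega_n}\to\nabla b_{\Omega_\infty}$ uniformly on $U_r(\Gamma_\infty)$ and $\nabla_\Gamma b_{\Omega_\infty}\equiv 0$ on $\Gamma_\infty$. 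Therefore $d_x\tau_n = \iota_x + E_n(x)$ where $\iota_x$ is the isometric inclusion $T_x\Gamma_\infty\hookrightarrow\R^d$ and $\sup_{x\in\Gamma_\infty}\|E_n(x)\|\to 0$; consequently $(d_x\tau_n)^\top d_x\tau_n = \mathrm{Id} + \smallo_{}(1)$ uniformly, and by continuity of the determinant, $\operatorname{Jac}(\tau_n)(x)=1+\smallo_{}(1)$ uniformly in $x$, which is \eqref{eq:limit_det}. Alternatively, and perhaps more in the spirit of the paper, one can read off the Jacobian from the factorization $T_n(b_{\Omega_\infty}\text{-coordinate},\tau_n(x))$-style identities relating $T_n$, $T_\infty$ and the projections, using \cref{lem:det_bounded}.

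The main obstacle is Step~1: upgrading "local diffeomorphism that is $\mathscr{C}^1$-close to the identity" to "global diffeomorphism" requires care because $\tau_n$ maps between two \emph{different} manifolds $\Gamma_\infty$ and $\Gamma_n$, so one cannot directly invoke a homogeneity/degree argument on a fixed manifold. The clean fix is to compose with $p_\infty$: by \cref{lem:inclusions}, for $n$ large $\Gamma_n\subset U_{r_0/2}(\Gamma_\infty)$, so $p_\infty$ restricts to a smooth map $\Gamma_n\to\Gamma_\infty$, and $p_\infty\circ\tau_n:\Gamma_\infty\to\Gamma_\infty$ converges to the identity in $\mathscr{C}^1$ by \cref{lem:prop_p}, hence is a diffeomorphism of $\Gamma_\infty$ for $n$ large; combined with the fact that $\tau_n$ is a local diffeomorphism and $p_\infty|_{\Gamma_n}$ is a bijection onto $\Gamma_\infty$ (its inverse being the normal flow landing on $\Gamma_n$, well-defined since $\Gamma_n$ is a $\mathscr{C}^{1,1}$ graph over $\Gamma_\infty$ in the tube), this forces $\tau_n$ itself to be a global diffeomorphism. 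Everything else is routine once the $\mathscr{C}^1$-closeness estimates above are in hand.
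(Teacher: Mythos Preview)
Your Step~2 (the Jacobian estimate) is essentially the paper's argument: you differentiate $\tau_n(x)=x-b_{\Omega_n}(x)\nabla b_{\Omega_n}(x)$ along $T_x\Gamma_\infty$, observe that $\langle\nabla b_{\Omega_n}(x),v\rangle\to 0$ for $v\in T_x\Gamma_\infty$ (since $v\perp\nabla b_{\Omega_\infty}(x)$) and that $b_{\Omega_n}\nabla^2 b_{\Omega_n}\to 0$ in $L^\infty$ on $\Gamma_\infty$, and conclude by continuity of the determinant. Two small corrections: the tensor in your formula should be $\nabla b_{\Omega_n}\otimes\nabla b_{\Omega_n}$ (not $\nabla_\Gamma b_{\Omega_n}\otimes\nabla b_{\Omega_n}$), and the convergence $\tau_n\to\mathrm{Id}$ is only in $W^{1,\infty}$ (the Hessian $\nabla^2 b_{\Omega_n}$ is merely $L^\infty$, not continuous), not genuine $\mathscr{C}^1$; this does not affect the Jacobian argument.

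Your Step~1 differs from the paper. You argue globality by composing with $p_\infty$ and claiming $p_\infty\circ\tau_n:\Gamma_\infty\to\Gamma_\infty$ is a diffeomorphism because it is close to the identity; the paper instead proves injectivity of $\tau_n$ directly from the uniform ball property \eqref{ballCion}: if $p_n(x)=p_n(y)$ with $x\neq y$, then $x$ lies on the normal line to $\Gamma_n$ through $y$, and for $n$ large the direction $\nabla b_{\Omega_n}(y)$ is so close to $\nabla b_{\Omega_\infty}(y)$ that $x$ would have to fall inside one of the two tangent $r_0$-balls at $y$, contradicting $x\in\Gamma_\infty$. Your route can be made to work, but as written it has a circularity: you justify bijectivity of $p_\infty|_{\Gamma_n}$ by asserting that ``$\Gamma_n$ is a $\mathscr{C}^{1,1}$ graph over $\Gamma_\infty$ in the tube'', which is precisely the content of the diffeomorphism claim you are proving. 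A clean fix is to note that the $W^{1,\infty}$-closeness of $p_\infty\circ\tau_n$ to $\mathrm{Id}$ already yields bi-Lipschitz injectivity (integrate the a.e.\ differential along geodesics), hence $\tau_n$ is injective; surjectivity then follows by running the same argument with the roles of $n$ and $\infty$ swapped (applied to $\sigma_n:=p_\infty|_{\Gamma_n}$) and observing that $\tau_n\circ\sigma_n$ and $\sigma_n\circ\tau_n$ are both $\mathscr{C}^0$-close to the identity on compact manifolds, hence surjective. The paper's ball-condition argument is shorter and avoids these detours.
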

\begin{proof}
  Let $x\in \Gamma_\infty $. We take $v \in T_x \Gamma_\infty$, and identify it with an element of the tangent hyperplane (see \cref{rmk:identification_tangent_plan}). As $v$ is tangent to $\Gamma _\infty$ at $x$, we get
  $$\langle v, \nabla b_{\Omega_\infty}(x)\rangle=0.$$
Using Lemma~\ref{lem:prop_p}, Item~\ref{lem:prop_p_proj}, 
  we get,
  $$d_x p_n(v)=v - \langle \nabla b_{\Omega_n}(x), v \rangle \nabla b_{\Omega_n}(x) - b_{\Omega_n}(x) \nabla^2 b_{\Omega_n}(x) v.$$
  Let us now fix $h<\frac{r_0}{3}$. For $n$ large enough, thanks to \cref{lem:inclusions}, we have $\Gamma_n \subset U_h(\Gamma_\infty)$. Thus,
  \begin{align*}
    \|d_x p_n(v)-v\| & \leq \|\nabla b_{\Omega_n}(x)-\nabla b_{\Omega_\infty}(x)\| \|v\| + \|b_{\Omega_n}\|_{L^\infty(\Gamma_\infty)} \|\nabla^2 b_{\Omega_n}(x)\| \|v\|                                                                                                                \\
                     & \leq \|v\| \left(\|\nabla b_{\Omega_n}-\nabla b_{\Omega_\infty}\|_{L^\infty(U_{\frac{r_0}{3}}(\Gamma_\infty))} + \|b_{\Omega_n}\|_{L^\infty(\Gamma_\infty)} \|\nabla^2 b_{\Omega_n}(x)\|_{L^\infty(U_{\frac{r_0}{3}}(\Gamma_\infty))} \right).
  \end{align*}
  We recall that both $\|\nabla b_{\Omega_n}-\nabla b_{\Omega_\infty}\|_{L^\infty(U_{\frac{r_0}{3}}(\Gamma_\infty))}$ and $\|b_{\Omega_n}\|_{L^\infty(\Gamma_\infty)}$ converge toward zero. Besides, the quantity $\|\nabla^2 b_{\Omega_n}(x)\|_{L^\infty(U_{\frac{r_0}{3}}(\Gamma_\infty))}$ is uniformly bounded. As a consequence,
  \begin{align}
    \sup_{x\in \Gamma_\infty} \sup_{\substack{v\in T_x \Gamma_n \\ \|v\|=1}}\| d_xp_n(v)-v \|\xrightarrow[n\to \infty]{} 0.
  \end{align}
  Using a  similar argument to the one used in \cref{lem:det_bounded}, we take the determinant and obtain \cref{eq:limit_det}.

  As a result we know that $\tau_n$ is a local diffeomorphism. It remains to prove that $\tau_n$ is injective. To this aim, we suppose that $n$ is large enough to ensure that
  $$\|\nabla b_{\Omega_n}-\nabla b_{\Omega_\infty}\|_{L^\infty(U_{\frac{r_0}{3}}(\Gamma_\infty))}< \frac{1}{2}.$$

  Let $x,y \in \Gamma_\infty$ such that $p_n(x)=p_n(y)$. If $x \not = y$, it implies that there exists $t \in (-\frac{2r_0}{3},\frac{2r_0}{3})\setminus \{0\}$  such that
  $$x=y+t\nabla b_{ \Omega_n}(p_n(y))=y+t\nabla b_{ \Omega_n}(y)$$
  As $\Omega_\infty \in \Oo$, it satisfies the $r_0$ uniform ball property (see \eqref{ballCion}). Thus, one has
  $$
    B_{r_0}\left(y+ r_0 \operatorname{sign} t \nabla b_{\Omega_\infty}(y)\right)\cap \Gamma_\infty= \emptyset.$$
  But we have
  \begin{align*}
    \left|x-y- r_0 \operatorname{sign} t \nabla b_{\Omega_\infty}(y)\right| & =\left| t \nabla b_{\Omega_n}(y)-r_0 \operatorname{sign} t \nabla b_{\Omega_\infty}(y)\right| \\
    & \leq \frac{t}{2} + \left|t-r_0 \operatorname{sign} t\right|< r_0.
  \end{align*}
  This is a contradiction, hence $\tau_n$ is injective which implies that it is a diffeomorphism from $\Gamma_\infty$ to $\Gamma_n$.
\end{proof}

\subsection{Proof of Lemma \ref{lem:seq_continuity}}\label{sec:proof-lem:seq_continuity}
Suppose that $(\Omega_n)_{n\in \N}\in \Oo^\N$ $R$-converges toward $\Omega_\infty \in \Oo$.
\subsubsection{Proof of Point~\ref{lem:seq_continuity_perimeter}}
For $h<r_0$ and using Lemma \ref{lem:integral_volume_surface}, we have
$$\mathcal{H}^{d-1}(\Gamma_n)= \int_{\Gamma_n} 
  d\mu_{\Gamma_n}(x)= \frac{1}{2h}\int_{U_h(\Gamma_n)} 
  \det(dT_n) \, dy.$$
By Lemma \ref{lem:inclusions}, moreover,
\begin{align*}
  \mathcal{H}^{d-1}(\Gamma_n)
   & = \frac{1}{2h}\int_{U_{h-t}(\Gamma_\infty)} 
  \det(dT_n)\, dy
  + \frac{1}{2h}\int_{U_h(\Gamma_n) \setminus U_{h-t}(\Gamma_\infty)} 
  \det(dT_n) \, dy
\end{align*}
for $t\in (0,h)$ and $n$ large enough.
Let us compare the first term in the right-hand side with
$$\mathcal{H}^{d-1}(\Gamma_\infty)=\frac{1}{2(h-t)}\int_{U_{h-t}(\Gamma_\infty)} 
  \det(dT_\infty) \, dy.$$
Using Lemma \ref{lem:det_bounded}, $\det(dT_\infty)=\det(dT_n)+{\operatorname{o}}_{h\to 0}(1)$ on $(-h,h)\times\Gamma_\infty$. Besides, $\frac{1}{2h}- \frac{1}{2(h-t)}= \operatorname{O}\left(\frac{t}{h}\right)$. Hence, 
$$\frac{1}{2h}\int_{U_{h-t}(\Gamma_\infty)} 
  \det(dT_n) \, dy = \mathcal{H}^{d-1}(\Gamma_\infty) + {\operatorname{o}}_{h\to 0}(1)+\operatorname{O}\left(\frac{t}{h}\right).$$
On the other hand, using again the relation $\det(dT_\infty)=\det(dT_n)+{\operatorname{o}}_{h\to 0}(1)$,
\begin{align*}
  \frac{1}{2h}\int_{U_h(\Gamma_n) \setminus U_{h-t}(\Gamma_\infty)} 
  \det(dT_n) \, dy & \leq \frac{1}{2h}\int_{U_{h+t}(\Gamma_\infty) \setminus U_{h-t}(\Gamma_\infty)}                                                                                   
  \det(dT_n) \, dy                                                                                                                                                                                       \\
                   & =\frac{1}{2h} \big( \int_{U_{h+t}(\Gamma_\infty)}                                                                                                                          
  \det(dT_n) \, dy -\int_{U_{h-t}(\Gamma_\infty)} 
  \det(dT_n) \, dy \big)                                                                                                                                                                                 \\
                   & = \frac{1
                   }{2h} (2(h+t) \mathcal{H}^{d-1}(\Gamma_\infty)- 2(h-t) \mathcal{H}^{d-1}(\Gamma_\infty)+{\operatorname{o}}_{h\to 0}(h))                          \\
                   & =
                   \frac{2t}{h} \mathcal{H}^{d-1}(\Gamma_\infty)+{\operatorname{o}}_{h\to 0}(1)= \operatorname{O}\left(\frac{t}{h}\right)+{\operatorname{o}}_{h\to 0}(1).
\end{align*}
By taking $h$ arbitrary small while $t=h^2$, we prove that $\mathcal{H}^{d-1}(\Gamma_n) \to \mathcal{H}^{d-1}(\Gamma_\infty)$.
\subsubsection{Proof of Point~\ref{lem:seq_continuity_volume}}
Using the uniform convergence of $b_{\Omega_n}$ to $b_{\Omega_\infty}$, we deduce that for every $ \eps >0$ there exists $N_0\in \N$ such that
$$\quad b_{\Omega_\infty}^{-1}((-\infty,-\eps])\subset b_{\Omega_n}^{-1}((-\infty,0)) \subset b_{\Omega_\infty}^{-1}((-\infty,\eps)),\qquad \forall n \geq N_0.$$
Hence, we get
$$\mathcal{H}^d(b_{\Omega_\infty} \leq -\eps) \leq \mathcal{H}^d(\Omega_n)\leq \mathcal{H}^d(b_{\Omega_\infty}< \eps).$$
By inner regularity of $\mathcal{H}^d$, $\mathcal{H}^d(b_{\Omega_\infty} \leq -\eps)\xrightarrow{\eps \to 0} \mathcal{H}^d(b_{\Omega_\infty} <0)=\mathcal{H}^d(\Omega_\infty)$. Similarly, by outer regularity $\mathcal{H}^d(b_{\Omega_\infty} < \eps) \xrightarrow{\eps \to 0} \mathcal{H}^d(b_{\Omega_\infty} \le  0)=\mathcal{H}^d(\Omega_\infty)$, where we used that $\Omega_\infty$ belongs to $\Oo$.

\subsubsection{Proof of Point~\ref{lem:seq_continuity_homotopy}}
We want to prove that $\Gamma_n$ is isotopic to $\Gamma_\infty$ for $n$ large enough.
We consider
\begin{equation*}
  \varphi_n(t,x):
  \begin{array}[t]{rcl}
    [0,1] \times \Gamma_\infty & \to     & \R^3           \\
    (t,x)                          & \mapsto & x+t(p_n(x)-x).
  \end{array}
\end{equation*}
According to \cref{lem:p_n_det}, $\varphi_n(1,\cdot)=\tau_n$ is a diffeomorphism from $\Gamma_\infty$ onto $\Gamma_n$. Besides, following the proof of \cref{lem:p_n_det}, we easily get that for $t\in (0,1)$, $\varphi_n(t,\cdot)$ is a diffeomorphism onto its image.
\subsection{Proof of \cref{th:lsc}}\label{sec:proofth:lsc}
Suppose that $(\Omega_n)_{n\in\N}\in \Oo^\N$ $R$-converges toward $\Omega_\infty \in \Oo$.
Let $0<t<h$ small enough (to be fixed later) and $n$ large enough.

We recall that the unit normal vector to $\Gamma_n$ is given by $\nabla b_{\Omega_n}$ (see \cref{lem:prop_p}). Then, according to \cref{lem:integral_volume_surface}, 
\begin{align*}
  F_1(\Omega_n) = & \int_{\Gamma_n} j_1(x,\nabla b_{\Omega_n}(x),H_{\Gamma_n}(p_n(y))) d\mu_{\Gamma_n}(x)                                    \\
  =               & \frac{1}{2h}\int_{U_h(\Gamma_n)} j_1(p_n(y),\nabla b_{\Omega_n}(p_n(y)),H_{\Gamma_n}(p_n(y))) \,  \det(d_{T_n^{-1}(y)}T_n) \, dy.
\end{align*}
Using Lemma \ref{lem:inclusions}, moreover,
\begin{align}
  F_1(\Omega_n)= & \frac{1}{2h}\int_{U_{h-t}(\Gamma_\infty)} j_1(p_n(y),\nabla b_{\Omega_n}(p_n(y)),H_{\Gamma_n}(p_n(y))) \,  \det(dT_n) \, dy                                       \\
  \label{eq:F1_second}
                 & +\frac{1}{2h}\int_{U_{h}(\Gamma_n)\setminus {U_{h-t}(\Gamma_\infty)}} j_1(p_n(y),\nabla b_{\Omega_n}(p_n(y)),H_{\Gamma_n}(p_n(y))) \,  \det(dT_n) \, dy.
\end{align}
The key idea is to prove that all arguments of $j_1$ 
in the first term
 convergence  
 toward their analogues for $n=\infty$
and to ensure that the second term is small for $t$ small.

Let us start with comparing the first term in the right-hand side with $F_1(\Omega_\infty)$. Notice that
\begin{eqnarray*}
  \lefteqn{\frac{1}{2h}\int_{U_{h-t}(\Gamma_\infty)} j_1(p_n(y),\nabla b_{\Omega_n}(p_n(y)),H_{\Gamma_n}(p_n(y))) \,  \det(dT_n) \, dy} \\
  &=& \frac{1}{2(h-t)}\int_{U_{h-t}(\Gamma_\infty)} \frac{2(h-t)}{2h} \frac{\det(dT_n)}{\det(dT_\infty)} j_1(p_n(y),\nabla b_{\Omega_n}(p_n(y)),H_{\Gamma_n}(p_n(y))) \,  \det(dT_\infty) \, dy.
\end{eqnarray*}


By Lemma \ref{lem:det_bounded}, we have
\begin{align}
  \label{eq:estimate_h_t}
  \left\| \frac{2(h-t)}{2h} \frac{\det(dT_n)}{\det(dT_\infty)} -1\right\|_{L^\infty(U_{
  h}(\Gamma_\infty))} = {\operatorname{o}}_{h\to 0}(1) + \operatorname{O}\left(\frac{t}{h}\right).
\end{align}

Let us now investigate the mean curvature term. Note that this term is slightly technical to handle for two reasons:
\begin{itemize}
  \item the mean curvature $H_{\Gamma_n}$ is defined as the trace of the shape operator, which is itself defined as the differential of the restriction to the hypersurface of $\nabla b_{\Omega_n}$ (see \ref{sec:curv});
  \item the Hessian of $b_{\Omega_n}$   converges only in a weak sense.
\end{itemize}
We will use the following lemma, which is obtained thanks to the chain rule.
\begin{lemma}[Theorem 4.4 of \cite{delfourTangentialDifferentialCalculus2000}]\label{lem:delff}
  Let $h < r_0$ and $n \in \bar{\N}$. 
  If $\nabla^2 b_{\Omega_n}(x)$ exists for $x\in U_h(\Gamma_\infty)$, then $\nabla^2 b_{\Omega_n}(p_n(x))$ exist and
  \begin{align*}
    \nabla^2 b_{\Omega_n}(p_n(x))=\nabla^2 b_{\Omega_n}(x) [\Id- b_{\Omega_n}(x) \nabla^2 b_{\Omega_n}(x)]^{-1}.
  \end{align*}
  Besides, one has that $\nabla^2 b_{\Omega_n}(\tau_n^{-1}(p_n(x)))$ exists as well.
\end{lemma}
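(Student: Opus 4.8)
The plan is to derive the identity from the chain rule applied to the composition $\nabla b_{\Omega_n} \circ p_n$, exploiting the fact that $p_n$ is the projection onto $\Gamma_n$ and that $\nabla b_{\Omega_n}$ is constant along normal lines. First I would recall the two elementary facts established earlier: by Lemma~\ref{lem:prop_p}, Item~\ref{lem:prop_p_proj}, one has $p_n(x)=x-b_{\Omega_n}(x)\nabla b_{\Omega_n}(x)$ on $U_h(\Gamma_n)$, and by Lemma~\ref{lem:carac_Oo}, Item~\ref{lem_prop:Lipschitz}, $\nabla b_{\Omega_n}$ is Lipschitz on $U_h(\Gamma_n)$, hence differentiable a.e. and its a.e.\ derivative is $\nabla^2 b_{\Omega_n}$. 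The key structural remark is that $\nabla b_{\Omega_n}$ is $1$-homogeneous-invariant along the fibers of $p_n$: more precisely, $\nabla b_{\Omega_n}(x)=\nabla b_{\Omega_n}(p_n(x))$ for every $x\in U_h(\Gamma_n)$, since the signed distance function has gradient constant along the normal segment through $x$. This is exactly what lets us relate the Hessian at $x$ to the Hessian at $p_n(x)$.

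The main computation is then: differentiate the identity $\nabla b_{\Omega_n}(x)=\nabla b_{\Omega_n}(p_n(x))$ at a point $x$ where $\nabla^2 b_{\Omega_n}(x)$ exists. By the chain rule (valid here since $p_n$ is Lipschitz and, at such $x$, differentiable with $d_x p_n = \Id - b_{\Omega_n}(x)\nabla^2 b_{\Omega_n}(x) - \nabla b_{\Omega_n}(x)\otimes\nabla b_{\Omega_n}(x)$, using $\nabla b_{\Omega_n}(x)=\nabla b_{\Omega_n}(p_n(x))$ and $|\nabla b_{\Omega_n}|=1$ to simplify the rank-one term when restricted appropriately), one obtains $\nabla^2 b_{\Omega_n}(x) = \nabla^2 b_{\Omega_n}(p_n(x))\,(\Id - b_{\Omega_n}(x)\nabla^2 b_{\Omega_n}(x))$, at least on the tangent directions, and since $\nabla^2 b_{\Omega_n}$ annihilates the normal direction $\nabla b_{\Omega_n}$ (because $|\nabla b_{\Omega_n}|^2\equiv 1$), the identity holds as full $d\times d$ matrices. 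Inverting the factor $\Id - b_{\Omega_n}(x)\nabla^2 b_{\Omega_n}(x)$, which is invertible because its eigenvalues in the tangent directions are $1-b_{\Omega_n}(x)\kappa_i$ with $|b_{\Omega_n}(x)|<h<r_0\le 1/|\kappa_i|$ (principal curvatures bounded by $1/r_0$ from the reach condition) and its eigenvalue in the normal direction is $1$, yields the stated formula $\nabla^2 b_{\Omega_n}(p_n(x)) = \nabla^2 b_{\Omega_n}(x)[\Id - b_{\Omega_n}(x)\nabla^2 b_{\Omega_n}(x)]^{-1}$. Since the result is quoted verbatim from \cite[Theorem~4.4]{delfourTangentialDifferentialCalculus2000}, I would simply cite it and sketch this chain-rule argument rather than reprove it in full.

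For the last sentence — that $\nabla^2 b_{\Omega_n}(\tau_n^{-1}(p_n(x)))$ exists — the plan is to note that $\tau_n^{-1}(p_n(x))$ is by definition the point $y\in\Gamma_\infty$ with $p_n(y)=p_n(x)$; for $n$ large enough $\tau_n$ is a diffeomorphism by Lemma~\ref{lem:p_n_det}, so this point is well-defined and lies on the same normal fiber of $p_n$ as $x$. Since $\nabla b_{\Omega_n}$ is differentiable on a full-measure subset of $U_h(\Gamma_n)$ and is constant along each normal fiber, and the fiber through $x$ is a one-dimensional segment, one has to argue that differentiability at $x$ (in all $d$ directions) transfers to the point $p_n(x)$ on the same fiber and then to any point of that fiber inside the tubular neighborhood; this is precisely the content of the displayed formula read in reverse (the factor $[\Id-b\,\nabla^2 b]$ is invertible everywhere on the fiber), so the existence at $p_n(x)$ already granted by the formula propagates. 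I expect the main obstacle to be the careful justification of the a.e.\ chain rule for the Lipschitz map $p_n$ — one must check that $x$ being a differentiability point of $\nabla b_{\Omega_n}$ indeed forces $\nabla^2 b_{\Omega_n}$ to exist at the image point $p_n(x)$, which is not automatic for compositions of merely Lipschitz maps and is exactly why the cited theorem is nontrivial; everything else is linear algebra controlled by the reach bound.
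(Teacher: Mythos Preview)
Your proposal is correct and aligns with the paper's approach: the paper gives no proof at all, simply citing \cite[Theorem~4.4]{delfourTangentialDifferentialCalculus2000} for the main formula and remarking that the last sentence ``can be obtained by straightforwardly adapting the proof of its Theorem~4.4.'' Your sketch of the chain-rule argument applied to the identity $\nabla b_{\Omega_n}=\nabla b_{\Omega_n}\circ p_n$, together with the invertibility of $\Id-b_{\Omega_n}\nabla^2 b_{\Omega_n}$ from the reach bound, is precisely the content of that cited theorem, and your fiber-propagation remark for the existence at $\tau_n^{-1}(p_n(x))$ is the natural adaptation the paper alludes to.
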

Notice that the last part of the statement is not explicitly contained in \cite{delfourTangentialDifferentialCalculus2000} but can be obtained by straightforwardly adapting the proof of its Theorem 4.4.

As $\nabla^2 b_{\Omega_n}$ is uniformly bounded on a neighboorhood of $\Gamma_\infty$ and that $ b_{\Omega_n}(x)\leq h$ for $x \in U_{h}(\Gamma_n)$, there exists $C>0$ such that
$$
 \mathrm{ess\,sup}_{x\in U_{h}(\Gamma_n)} \| [\Id- b_{\Omega_n}(x) \nabla^2 b_{\Omega_n}(x)]^{-1}-\Id \| \leq C h,
$$
for $h$ small.
As a consequence, using \cref{lem:trace_curvature} (given in the appendix), one has
\begin{align*}
  H_{\Gamma_n}(p_n(x)) =\operatorname{Tr} \nabla^2 b_{\Omega_n}(p_n(x))= \operatorname{Tr} \nabla^2 b_{\Omega_n}(x)+\operatorname{O}(h).
\end{align*}
Note also that $H_{\Gamma_n}
\leq \frac{1}{r_0}$ on $\Gamma_n$.  We can use the uniform continuity of $j_1$ on a compact set to ensure that for $n$ large enough and $n=\infty$,
\begin{align}
  \label{eq:j_1_trace}
   & \frac{1}{2(h-t)}\int_{U_{h-t}(\Gamma_\infty)} j_1(p_n(y),\nabla b_{\Omega_n}(p_n(y)),H_{\Gamma_n}(p_n(y))) \,  \det(dT_\infty) \, dy \notag\\
   & =
  \frac{1}{2(h-t)}\int_{U_{h-t}(\Gamma_\infty)} j_1(p_n(y),\nabla b_{\Omega_n}(p_n(y)),\operatorname{Tr} \nabla^2 b_{\Omega_n}(y
  )) \,  \det(dT_\infty) \, dy+ \operatorname{O}(h).
\end{align}

The next step is to pass to the limit within the integral. Note that, by definition of $R$-convergence,
\begin{align*}
  \left\{ 
    \begin{array}{rll}
      p_n &\xrightarrow[n\to \infty]{}p_\infty & \text{ strongly in } L^\infty(U_{\frac{r_0}{2}}(\Gamma_\infty) ),\\
      \nabla b_{\Omega_n} \circ p_n &\xrightarrow[n\to \infty]{} \nabla b_{\Omega_\infty} \circ p_\infty & \text{ strongly in } L^\infty(U_{\frac{r_0}{2}}(\Gamma_\infty) ),\\
      \operatorname{Tr} \nabla^2 b_{\Omega_n}
      &\xrightarrow[n\to \infty]{} \operatorname{Tr} \nabla^2 b_{\Omega_{\infty}
      }
       & \text{ weak star in } L^\infty(U_{\frac{r_0}{2}}(\Gamma_\infty) ).
    \end{array}
  \right.
\end{align*}
Thus, using for example \cite{MR348582}, we have
\begin{align}
  \liminf_{n\to \infty}& \frac{1}{2(h-t)}\int_{U_{h-t}(\Gamma_\infty)} j_1(p_n(y),\nabla b_{\Omega_n}(p_n(y)),\operatorname{Tr} \nabla^2 b_{\Omega_n}(y
  )) \,  \det(dT_\infty) \, dy  \notag \\
  &\geq \frac{1}{2(h-t)}\int_{U_{h-t}(\Gamma_\infty)} j_1(p_\infty(y),\nabla b_{\Omega_\infty}(p_\infty(y)),\operatorname{Tr} \nabla^2 b_{\Omega_\infty}(y
  )) \,  \det(dT_\infty) \, dy \notag \\
  &=
  \frac{1}{2(h-t)}\int_{U_{h-t}(\Gamma_\infty)} j_1(p_\infty(y),\nabla b_{\Omega_\infty}(p_\infty(y)),\operatorname{Tr} \nabla^2 b_{\Omega_\infty}(p_\infty(y
  ))) \,  \det(dT_\infty) \, dy+ \operatorname{O}(h) \notag \\
  \label{eq:lim_inf_first_term}
  &= F_1(\Omega_\infty)+\bigO (h).
\end{align}
In order to conclude, let us check that the term in line \eqref{eq:F1_second} is small.
Since $j_1$ is continuous on a compact set, it admits a minimum $m_0\in \R$. Let $m_1= \min(0,m_0)\leq0$. Then,
\begin{eqnarray*}
  \lefteqn{\frac{1}{2h} \int_{U_{h}(\Gamma_n)\setminus {U_{h-t}(\Gamma_\infty)}} j_1(p_n(y),\nabla b_{\Omega_n}(p_n(y)),H_{\Gamma_n}(p_n(y))) \,  \det(dT_n) \, dy}\\
  &\geq & \frac{1}{2h}\int_{U_{h}(\Gamma_n)\setminus {U_{h-t}(\Gamma_\infty)}} m_1 \,  \det(dT_n) \, dy\\
  &\geq &
  \frac{1}{2h}\int_{U_{h+t}(\Gamma_\infty)\setminus {U_{h-t}(\Gamma_\infty)}} m_1 \,  
  \frac{\det(dT_n)}{\det(dT_\infty)} \det(dT_\infty) \, dy.  
\end{eqnarray*}
Using
$$ \left\| 
\frac{\det(dT_n)}{\det(dT_\infty)} -1 \right\|_{L^\infty(U_{2h}(\Gamma_\infty))}= {\operatorname{o}}_{h\to 0}(1)
$$
and
$$
\int_{U_{h\pm t}(\Gamma_\infty)} m_1 \, \det(dT_\infty) \, dy=2(h\pm t) m_1 \mathcal{H}^{d-1}(\Gamma_\infty),$$
we get
\begin{align}
  \label{eq:second_term}
  \frac{1}{2h} \int_{U_{h}(\Gamma_n)\setminus {U_{h-t}(\Gamma_\infty)}} j_1(p_n(y),\nabla b_{\Omega_n}(p_n(y)),H_{\Gamma_n}(p_n(y))) \,  \det(dT_n) \, dy \geq 
  m_1 \mathcal{H}^{d-1}(\Gamma_\infty) \left({\operatorname{o}}_{h\to 0}(1)+\operatorname{O}\left(\frac{t}{h}\right)\right).
\end{align}
Finally, combining 
Equations~\eqref{eq:j_1_trace}--\eqref{eq:second_term},
we obtain
$$\liminf_{n\to +\infty}  F_1(\Omega_n) \geq 
(F_1(\Omega_\infty)+O(h)) \left(1
+ \operatorname{O}\left(\frac{t}{h}\right)\right) + m_1 \mathcal{H}^{d-1}(\Gamma_\infty) \left({\operatorname{o}}_{h\to 0}(1)+\operatorname{O}\left(\frac{t}{h}\right)\right).$$
Hence, taking $h\to 0$ while ensuring $t=\operatorname{o}(h)$ gives
$$\liminf_{n\to +\infty} F_1(\Omega_n) \geq F_1(\Omega_\infty),$$
and finishes the proof.

\subsection{Proof of \cref{th:elliptic_in_boundary}}

Let $(\Omega_n)_{n\in\N}$ denote a sequence that $R$-converges to $\Omega_\infty$, and let $v_n$  denote the unique solution $v_{\Gamma_n}$ to Problem~\eqref{minNRJEpOm} 
for $\Omega=\Omega_n$.
The difficult part here is that $v_{\Gamma_n}$ is not defined on $\Gamma_\infty$. Our main tool will be $\tau_n$, the restriction to $\Gamma_\infty$ of the orthogonal projection $p_n$ on $\Gamma_n$. Those objects were introduced in \cref{subsec:extruded_surf} and we proved that $\tau_n$ is a diffeomorphism between $\Gamma_\infty$ and $\Gamma_n$ in \cref{lem:p_n_det}.

We also have to be careful when we transport the tangential gradient of a function. In order to relate the tangential gradient and the ambient gradient, we establish the following pointwise estimate.

\begin{lemma}
  \label{lem:gradient_relation}
  Let 
 $n\in\N$ and $f_n\in H^1(\Gamma_n)$. Then $f_n\circ \tau_n \in H^1(\Gamma_\infty)$ and, for almost every $x\in \Gamma_\infty$,
  \begin{align}\label{eq:C_n}
    \nabla_{\Gamma_\infty} (f_n \circ \tau_n)(x)= \nabla_{\Gamma_n} f_n
    (\tau_n(x)) (\Id +C_n(x)),
  \end{align}
  where 
 tangential gradients are understood as $d$-dimensional line vectors and 
  \begin{align*}
        C_n(x)= (\nabla b_{\Omega_n}(x)^\top  \nabla b_{\Omega_n}(x)-\Id)\nabla b_{\Omega_\infty}(x)^\top  \nabla b_{\Omega_\infty}(x) +
    b_{\Omega_n}(x) \nabla^2 b_{\Omega_n}(x) (\nabla b_{\Omega_\infty}(x)^\top  \nabla b_{\Omega_\infty}(x)-\Id).
  \end{align*}
  Besides, $C_n$ converges  toward zero in  the $L^\infty$ norm:  
  \begin{align}
    \label{eq:cvg_cn}
    \mathrm{ess\,sup}
    _{x\in \Gamma_\infty} \| C_n(x)\| \xrightarrow[n \to \infty]{} 0.
  \end{align}
\end{lemma}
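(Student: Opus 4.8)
The plan is to compute $\nabla_{\Gamma_\infty}(f_n\circ\tau_n)$ by the chain rule, using the formula for $d_x p_n$ already obtained in the proof of \cref{lem:p_n_det}, and then to compare the resulting expression with $\nabla_{\Gamma_n} f_n(\tau_n(x))$ acting on $T_x\Gamma_\infty$. First I would recall the standard characterization: for $g\in H^1(\Gamma)$, the tangential gradient $\nabla_\Gamma g(x)$ is the unique line vector in $\R^d$ orthogonal to $\nu_\Gamma(x)=\nabla b_\Omega(x)$ such that $d_x g(v)=\nabla_\Gamma g(x)\,v$ for all $v\in T_x\Gamma$; equivalently, if $\tilde g$ is any $H^1$ extension to a neighborhood, $\nabla_\Gamma g=(\Id-\nabla b_\Omega^\top\nabla b_\Omega)\nabla\tilde g$. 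The density of smooth functions in $H^1(\Gamma_n)$ lets me first establish \eqref{eq:C_n} for $f_n\in\mathscr{C}^1$ and then pass to the limit, which also gives $f_n\circ\tau_n\in H^1(\Gamma_\infty)$ since $\tau_n$ is a bi-Lipschitz diffeomorphism by \cref{lem:p_n_det}.

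The core computation: for $v\in T_x\Gamma_\infty$ (so $\langle v,\nabla b_{\Omega_\infty}(x)\rangle=0$), the chain rule gives $d_x(f_n\circ\tau_n)(v)=\nabla_{\Gamma_n}f_n(\tau_n(x))\,d_x p_n(v)$, because $d_x p_n(v)$ is automatically tangent to $\Gamma_n$ at $\tau_n(x)$. Substituting the already-derived expression
$$
d_x p_n(v)=v-\langle\nabla b_{\Omega_n}(x),v\rangle\,\nabla b_{\Omega_n}(x)-b_{\Omega_n}(x)\,\nabla^2 b_{\Omega_n}(x)\,v,
$$
and using that $\nabla_{\Gamma_n}f_n(\tau_n(x))$ annihilates $\nabla b_{\Omega_n}(\tau_n(x))=\nabla b_{\Omega_n}(x)$ (the projection $p_n$ preserves the normal direction), the middle term drops; this would naively suggest $C_n(x)=-b_{\Omega_n}(x)\nabla^2 b_{\Omega_n}(x)$ acting on tangent vectors. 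To match the stated $C_n$, I then rewrite everything as an identity of line vectors on all of $\R^d$ rather than just on $T_x\Gamma_\infty$: I insert the projector $P_\infty(x)=\Id-\nabla b_{\Omega_\infty}(x)^\top\nabla b_{\Omega_\infty}(x)$ onto $T_x\Gamma_\infty$ (so $v=P_\infty(x)v$), and use $\Id-P_\infty(x)=\nabla b_{\Omega_\infty}^\top\nabla b_{\Omega_\infty}$ together with the fact that replacing $\nabla b_{\Omega_n}$ by its ambient extension and projecting with $P_\infty$ produces the cross terms $(\nabla b_{\Omega_n}^\top\nabla b_{\Omega_n}-\Id)\nabla b_{\Omega_\infty}^\top\nabla b_{\Omega_\infty}$ and $b_{\Omega_n}\nabla^2 b_{\Omega_n}(\nabla b_{\Omega_\infty}^\top\nabla b_{\Omega_\infty}-\Id)$ that appear in the statement. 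This is bookkeeping with the two projectors $P_n$ and $P_\infty$ and the identity $\nabla_{\Gamma_n}f_n\,\nabla b_{\Omega_n}^\top=0$.

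For the convergence \eqref{eq:cvg_cn}, I bound $\|C_n(x)\|$ by the triangle inequality in terms of $\|\nabla b_{\Omega_n}(x)^\top\nabla b_{\Omega_n}(x)-\Id\|$, which I control via $\|\nabla b_{\Omega_n}(x)-\nabla b_{\Omega_\infty}(x)\|$ (using $\|\nabla b_{\Omega_\infty}\|=1$ on $\Gamma_\infty$ and near it), and the factor $b_{\Omega_n}(x)\to0$ uniformly on $\Gamma_\infty$, with $\|\nabla^2 b_{\Omega_n}\|$ uniformly bounded on $U_{r_0/3}(\Gamma_\infty)$ by \cref{lem:carac_Oo}, Point~\ref{lem_prop:Lipschitz}. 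Since $R$-convergence gives $\nabla b_{\Omega_n}\to\nabla b_{\Omega_\infty}$ in $\mathscr{C}^1$, hence uniformly, on $U_r(\Gamma_\infty)$ for $r<r_0$, and $b_{\Omega_n}\to b_{\Omega_\infty}$ uniformly on $\overline D$ with $b_{\Omega_\infty}=0$ on $\Gamma_\infty$, all three factors that must be small are small, uniformly in $x\in\Gamma_\infty$. The main obstacle is purely algebraic: getting the exact form of $C_n$ requires careful tracking of which projector acts where and exploiting $\nabla_{\Gamma_n}f_n\perp\nabla b_{\Omega_n}$ at the right point; once the identity is correctly set up as a statement about $d$-dimensional line vectors, the estimate \eqref{eq:cvg_cn} is routine.
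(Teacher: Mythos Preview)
Your plan is correct and follows essentially the same approach as the paper. The only organizational difference is that the paper obtains the full $d$-dimensional row-vector identity in one stroke by considering the ambient extension $f_n\circ p_n\circ p_\infty$ and writing $\nabla_{\Gamma_\infty}(f_n\circ\tau_n)(x)=\nabla(f_n\circ p_n\circ p_\infty)(x)$; the chain rule then produces directly the product $(\Id-\nabla b_{\Omega_n}^\top\nabla b_{\Omega_n}-b_{\Omega_n}\nabla^2 b_{\Omega_n})(\Id-\nabla b_{\Omega_\infty}^\top\nabla b_{\Omega_\infty})$, from which the single term $-\nabla b_{\Omega_n}^\top\nabla b_{\Omega_n}$ is dropped using $\nabla_{\Gamma_n}f_n\perp\nabla b_{\Omega_n}$, yielding $C_n$ exactly. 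Your route---work on tangent vectors first, then insert $P_\infty$, then add back a null term to match the stated $C_n$---arrives at the same place with one extra bookkeeping step. The convergence argument \eqref{eq:cvg_cn} is identical in both.
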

\begin{proof}
  First notice that
  $$
    \nabla_{\Gamma_\infty} (f_n \circ p_n)(x)=\nabla (f_n \circ p_n \circ p_\infty)(x)
  $$
  for almost every $x\in \partial\Omega_\infty$, since the directional derivative of $f_n \circ p_n \circ p_\infty$ at the point $x$ in the direction $\nabla b_{\Omega_\infty}(x)$ is zero. 
  By Lemma~\ref{lem:delff}, $\nabla^2 b_{\Omega_n}(x)$ is well-defined for almost every $x$ in $\Gamma_\infty$.
  By Lemma~\ref{lem:prop_p} and the chain rule we obtain, almost everywhere on $\Gamma_\infty$, 
  \begin{align*}
    \nabla (f_n \circ p_n \circ p_\infty)(x) & =((\nabla f_n)\circ p_n)
    (\Id-\nabla b_{\Omega_n}^\top  \nabla b_{\Omega_n} -b_{\Omega_n} \nabla^2 b_{\Omega_n}) (\Id-\nabla b_{\Omega_\infty}^\top  \nabla b_{\Omega_\infty} -b_{\Omega_\infty} \nabla^2 b_{\Omega_\infty})            \\
                                             & =((\nabla_{\Gamma_n} f_n) \circ \tau_n)
                                              ( \Id -(\Id-\nabla b_{\Omega_n}^\top  \nabla b_{\Omega_n})\nabla b_{\Omega_\infty}^\top  \nabla b_{\Omega_\infty} -b_{\Omega_n} \nabla^2 b_{\Omega_n}(\Id-\nabla b_{\Omega_\infty}^\top  \nabla b_{\Omega_\infty})),
  \end{align*}
  where we 
  used that $\nabla f_n =\nabla_{\Gamma_n} f_n$, $p_n=\tau_n$, and $b_{\Omega_\infty}=0$ on $\Gamma_\infty$. This shows \cref{eq:C_n}.

  Let us now bound 
  the $L^\infty$ norm of $C_n$. There exists $C>0$ such that, for every $n$ satisfying $\Gamma_\infty \subset U_{\frac{r_0}{2}}(\Gamma_n)$, 
  \begin{align*}
    \mathrm{ess\,sup}
    _{x \in \Gamma_\infty} \| \nabla^2 b_{\Omega_n}(x) (\nabla b_{\Omega_\infty}^\top (x)  \nabla b_{\Omega_\infty}(x)-\Id)\|  \leq C.
  \end{align*}
  Besides, $\|b_{\Omega
  _n}\|_{L^\infty(\Gamma_\infty)}$ converges to zero. Finally, using the uniform convergence of $\nabla b_{\Omega_n}$ toward $\nabla b_{\Omega_\infty}$, we get
  $$\nabla b_{\Omega_n}^\top  \nabla b_{\Omega_n}\nabla b_{\Omega_\infty}^\top  \nabla b_{\Omega_\infty}
    \xrightarrow[n\to \infty]{L^\infty(\Gamma_\infty)}
    \nabla b_{\Omega_\infty}^\top  (\nabla b_{\Omega_\infty}\nabla b_{\Omega_\infty}^\top ) \nabla b_{\Omega_\infty}=\nabla b_{\Omega_\infty}^\top  \nabla b_{\Omega_\infty}.$$
  This concludes the proof of \cref{eq:cvg_cn}.
\end{proof}
From the solution $v_n$ in $H^1_*(\Gamma_n)$, we introduce the function $w_n$ defined on $\Gamma
_\infty$ by 
\begin{align}
  w_n=v_n\circ \tau_n-\frac{1}{ \mathcal{H}^{d-1}(\Gamma_\infty) }\int_{\Gamma
  _\infty}v_n\circ \tau_n \, d\mu_{\Gamma
  _\infty}.
\end{align}
Note that, defined as such, $w_n$ belongs to $H^1_*(\Gamma_\infty)$.

\paragraph{Step 1: convergence of $(w_n)_{n\in\N}$.}

Let us start by considering the  sequence of energies $(\mathscr{E}_{\Gamma_n}(v_n))_{n\in\N}$. This sequence is upper bounded 
by $0$, since $\mathscr{E}_{\Gamma_n}(v_n)\le \mathscr{E}_{\Gamma_n}(0)=0$ for every $n$. By using the uniform Poincar\'e inequality stated in \cref{lem:PoincaSurf} combined with the Cauchy--Schwarz inequality, we get that $(\int_{\Gamma_n}|v_n|^2\, d\mu_{\Gamma_n}(x))_{n\in\N}$ is bounded. We now compute
\begin{align*}
  \frac{1}{ \mathcal{H}^{d-1}(\Gamma_\infty) }\int_{\Gamma_\infty}v_n\circ \tau_n\, d\mu_{\Gamma_\infty}&= \frac{1}{ \mathcal{H}^{d-1}(\Gamma_\infty) }\int_{\Gamma_n}v_n\, \operatorname{Jac}(\tau_n) d\mu_{\Gamma_n}\\
  &=\left( \frac{\sqrt{\mathcal{H}^{d-1}(\Gamma_n)}}{ \mathcal{H}^{d-1}(\Gamma_\infty)}\|v_n\|_{L^2(\Gamma_n)} \right) \smallo_{n\to\infty} (1),
\end{align*}
where we used Lemma~\ref{lem:p_n_det}, the Cauchy--Schwarz inequality, and the fact that $v_n$ has zero average on $\Gamma_n$. 
Hence, we infer that $w_n=v_n\circ \tau_n+ \smallo_{n\to\infty} (1)$.
Besides, by performing a change of variable and by using \cref{lem:p_n_det,lem:gradient_relation}, we get
\begin{align*}
  \int_{\Gamma_n} \left( \frac12 |\nabla_{\Gamma_n} v_n(y)|^2- f(y)v_n(y) \right)\, d\mu_{\Gamma_n}(y)
  &=\int_{\Gamma_\infty} \left( \frac12 |\nabla_{\Gamma_n} v_n (\tau_n(y))
  |^2- f(\tau_n(y))(v_n\circ \tau_n)
  (y) \right)\, \operatorname{Jac}(\tau_n)^{-1}d\mu_{\Gamma_\infty}(y)\\
  &=\int_{\Gamma_\infty} \left( \frac12 |\nabla_{\Gamma_\infty} w_n(y)|^2- f(\tau_n(y))w_n(y) \right)\, d\mu_{\Gamma_\infty}(y) + \smallo_{n\to\infty} (1),
\end{align*}
where we used that $\nabla_{\Gamma_\infty} w_n=\nabla_{\Gamma_\infty} (v_n\circ \tau_n)$ by definition of $w_n$.
By using \cref{lem:PoincaSurf} and again the Cauchy--Schwarz inequality, we successively infer that the sequences $(\int_{\Gamma_\infty}|w_n|^2\, d\mu_{\Gamma_\infty}(x))_{n\in\N}$ and $(\int_{\Gamma_\infty}|\nabla_{\Gamma_\infty} w_n|^2\, d\mu_{\Gamma_\infty}(x))_{n\in\N}$ are bounded.
By using \cref{th:RKSurf}, the sequence $(w_n)_{n\in\N}$ converges up to a subsequence toward $w_\infty\in H^1_*(\Gamma_\infty)$, weakly in $H^1(\Gamma_\infty)$ and strongly in $L^2(\Gamma_\infty)$. 
Up to extracting a subsequence, we get
\begin{align*}
  \int_{\Gamma_\infty} |\nabla_{\Gamma_\infty} w_\infty(x)|^2 \, d\mu_{\Gamma_\infty}&\leq \liminf_{n\to +\infty}\int_{\Gamma_\infty} |\nabla_{\Gamma_\infty} w_n(x)|^2 \, d\mu_{\Gamma_\infty},\\
    \lim_{n\to \infty}\int_{\Gamma_\infty} w_n(x)f(\tau_n(x)) \, d\mu_{\Gamma_\infty}&=\int_{\Gamma_\infty} w_\infty(x)f(x) \, d\mu_{\Gamma_\infty}.
\end{align*}
As a consequence,
$$
\mathscr{E}_{\Gamma_\infty}(w_\infty)\leq \liminf_{n\to +\infty} \mathscr{E}_{\Gamma_n}(v_n).
$$
\paragraph{Step 2: Minimality of $w_\infty$.}
Let $u\in H^1_*(\Gamma_\infty)$ be given and define $z_n$ in $H^1_*(\Gamma_n)$ by
\begin{align}
  z_n=u\circ \tau_n^{-1}-\frac{1}{ \mathcal{H}^{d-1}(\Gamma_n) }\int_{\Gamma_n}u\circ \tau_n^{-1}\, d\mu_{\Gamma_n}.
\end{align}
Let $n\in \N$. By minimality, one has
$$
  \mathscr{E}_{\Gamma_n}(v_n)\leq \mathscr{E}_{\Gamma_n}(z_n).
$$ 
By mimicking the arguments and computations of the first step, we easily get that
\begin{equation}\label{m0918}
 \mathscr{E}_{\Gamma_n}(z_n)=\mathscr{E}_{\Gamma_\infty}(u) +\smallo_{n\to\infty} (1),
\end{equation}
yielding at the end $ \mathscr{E}_{\Gamma_\infty}(w_\infty)\leq \mathscr{E}_{\Gamma_\infty}(u)$.
We infer that $w_\infty$ is the unique solution to the variational problem~\eqref{minNRJEpOm}.
Since the reasoning above holds for any closure point of $(w_n)_{n\in\N}$, it follows that the whole sequence $(w_n)_{n\in\N}$ converges toward $w_\infty$, weakly in $H^1(\Gamma_\infty)$ and strongly in $L^2(\Gamma_\infty)$.
Finally, using $u=w_\infty$ in \eqref{m0918}, we obtain that 
$$\mathscr{E}_{\Gamma_\infty}(w_\infty)= \liminf_{n\to +\infty} \mathscr{E}_{\Gamma_n}(v_n).$$
In particular $(\|w_n\|^2_{H^1(\Gamma_\infty)})_{n\in\N}$ converges toward $\|w_\infty\|^2_{H^1(\Gamma_\infty)}$ which implies the strong convergence of $w_n$ in $H^1(\Gamma_\infty)$.
\paragraph{Step 3: lower semi-continuity of $F_2$.}
Let us use the same notations as previously. 
Using a change of variable, we get
\begin{eqnarray*}
  F_2(\Omega_n) & = & \int_{\Gamma_n} j_2(x,\nabla b_{\Omega_n}(x),v_{n}(x),\nabla_{\Gamma_n} v_{n}(x)) \, d\mu_{\Gamma_n}(x)\\
  & = &  \int_{\Gamma_\infty} j_2(\tau_n(x),\nabla b_{\Omega_n}(\tau_n(x)),v_n(\tau_n(x)),\nabla_{\Gamma_n}v_n \circ \tau_n(x)) \, \operatorname{Jac}(\tau_n)^{-1} d\mu_{\Gamma_\infty}(x).
\end{eqnarray*}
Besides, according to the results above and \cref{lem:prop_p}, the following convergences hold
\begin{align*}
  \left\{ 
    \begin{array}{rll}
      \operatorname{Jac}(\tau_n)^{-1}&\xrightarrow[n\to \infty]{} 1& \text{ strongly in } L^\infty(\Gamma_\infty )\\
      \tau_n &\xrightarrow[n\to \infty]{}\Id|_{\Gamma_\infty} & \text{ strongly in } L^\infty(\Gamma_\infty )\\
      \nabla b_{\Omega_n} \circ \tau_n &\xrightarrow[n\to \infty]{} \nabla b_{\Omega_\infty} & \text{ strongly in } L^\infty(\Gamma_\infty )\\
      v_n\circ \tau_n &\xrightarrow[n\to \infty]{} w_{\infty} & \text{ strongly in } L^2(\Gamma_\infty )\\
      \nabla_{\Gamma_n} v_n\circ \tau_n &\xrightarrow[n\to \infty]{} \nabla_{\Gamma_\infty} w_\infty & \text{ strongly in } L^2(\Gamma_\infty ),
    \end{array}
  \right.
\end{align*}
where $w_\infty$ is the unique solution to the variational problem~\eqref{minNRJEpOm}.

By applying \cite[Theorem~1]{MR348582}, one has
$$
  \liminf_{n\to +\infty}F_2(\Omega_n)\geq F_2(\Omega_\infty).
$$
This is the desired conclusion.

\subsection{Main steps in the proof of \cref{th:elliptic_inside}}\label{sec:proof:th:elliptic_inside}
First note that $u_\Omega-g$ solves \cref{eq:PDE_Omega} with source term $h-\Delta g$ and Dirichlet boundary condition. As a consequence, we can reduce our study to the case of homonegeous Dirichlet condition (i.e., $u_\Omega
=0$ on $\Gamma$).

The method relies on a uniform extension property proved by Chenais in \cite{chenaisExistenceSolutionDomain1975} for surfaces satisfying an $\eps$-cone condition, which is weaker than the uniform ball condition.
\begin{lemma}[{\cite[Theorem II.1]{chenaisExistenceSolutionDomain1975}}]
  There exists a positive constant $C$ (depending only on $r_0$ 
  and $D$) such that for every $\Omega \in \Oo$ there exists an extension operator $E_\Omega\in\mathcal{L}(H^2(\Omega),H^2(D))$ satisfying
  \begin{align}
    \label{eq:extension_operator}
    E_\Omega(u)|_{\Omega}=u, \qquad \| E_\Omega\| _{\mathcal{L}(H^2(\Omega),H^2(D))}\leq C.
  \end{align}
\end{lemma}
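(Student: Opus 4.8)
The plan is to construct $E_\Omega$ explicitly by a higher-order reflection across $\Gamma=\partial\Omega$ performed through the signed distance $b_\Omega$, so that every constant is controlled by the uniform Lipschitz bounds of \cref{lem:carac_Oo} and hence is independent of $\Omega\in\Oo$. Fix $h<r_0/3$ (depending only on $r_0$). By \cref{lem:carac_Oo}, $b_\Omega\in\mathscr{C}^{1,1}$ with $\|\nabla b_\Omega\|=1$ and $\|\nabla^2 b_\Omega\|_{L^\infty(U_h(\Gamma))}\le 2/(r_0-h)$ uniformly in $\Omega$, and the map $T_\Omega\colon(t,y)\mapsto y+t\nabla b_\Omega(y)$ is a bi-Lipschitz $W^{2,\infty}$ diffeomorphism from $(-2h,2h)\times\Gamma$ onto $U_{2h}(\Gamma)$ whose Jacobian is uniformly close to $1$ (\cref{lem:det_bounded}). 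In the associated normal coordinates $(t,y)=(b_\Omega(x),p_\Omega(x))$ the boundary becomes the flat hypersurface $\{t=0\}$, the interior of $\Omega$ corresponds to $t<0$, and the exterior to $t>0$.

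On the exterior tube $\{0<b_\Omega<h\}$ I would define the reflection
\[Eu(x)=3\,u\bigl(x-2\,b_\Omega(x)\nabla b_\Omega(x)\bigr)-2\,u\bigl(x-3\,b_\Omega(x)\nabla b_\Omega(x)\bigr),\]
which in normal coordinates reads $Eu(t,y)=3\,u(-t,y)-2\,u(-2t,y)$ for $t>0$. The coefficients $(3,-2)$ solve the Vandermonde conditions $c_1+c_2=1$ and $-c_1-2c_2=1$, which guarantee that the traces on $\{t=0\}$ of both $Eu$ and its normal derivative $\partial_t Eu$ coincide with those of $u$; by the standard gluing lemma across the $\mathscr{C}^{1,1}$ interface $\Gamma$, the function equal to $u$ on $\Omega$ and to $Eu$ on the exterior tube then belongs to $H^2$ of the whole tube. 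Finally I would set $E_\Omega u=u$ on $\Omega$, $E_\Omega u=\chi(b_\Omega)\,Eu$ on $(U_h(\Gamma)\setminus\Omega)\cap D$, and $E_\Omega u=0$ elsewhere in $D$, where $\chi\in\mathscr{C}^\infty_c((-h,h))$ satisfies $\chi\equiv1$ near $0$ and $\|\chi\|_{W^{2,\infty}}\le C/h^2$. Since $\chi(0)=1$ and $\chi'(0)=0$, multiplication by $\chi(b_\Omega)$ does not affect the zeroth- and first-order traces at $\Gamma$, so $E_\Omega u\in H^2(D)$ and $E_\Omega u|_\Omega=u$.

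For the uniform bound I would estimate $\|E_\Omega u\|_{H^2(D)}$ term by term. The reflection maps $R_k(x)=x-(1+k)b_\Omega(x)\nabla b_\Omega(x)$, $k\in\{1,2\}$, are bi-Lipschitz and lie in $W^{2,\infty}$ with norms bounded only through $\|\nabla b_\Omega\|=1$ and the uniform bound on $\|\nabla^2 b_\Omega\|_{L^\infty}$; moreover their Jacobians stay uniformly bounded away from $0$ and $\infty$ by \cref{lem:det_bounded}. Hence the change of variables $x\mapsto R_k(x)$ together with the chain rule for $u\circ R_k$ gives $\|u\circ R_k\|_{H^2(\{0<b_\Omega<h\})}\le C\|u\|_{H^2(\Omega)}$ with $C=C(d,r_0)$; combining this with the Leibniz rule for $\chi(b_\Omega)\,Eu$ and the bound on $\|\chi\|_{W^{2,\infty}}$ yields $\|E_\Omega u\|_{H^2(D)}\le C\|u\|_{H^2(\Omega)}$ with $C=C(d,r_0,D)$, independent of $\Omega$, which is exactly \eqref{eq:extension_operator}.

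The main obstacle is that the reflection maps are only $\mathscr{C}^{1,1}$, i.e.\ $W^{2,\infty}$ but not $\mathscr{C}^2$, because $b_\Omega$ has only this regularity; one must therefore check carefully that composition with such maps preserves $H^2$ and that the gluing across $\Gamma$ produces no singular second-order part. Both points reduce to the fact that the chain rule for the second derivatives of $u\circ R_k$ involves only the factors $\nabla b_\Omega\in W^{1,\infty}$ and $\nabla^2 b_\Omega\in L^\infty$, each bounded uniformly in terms of $r_0$, so that no extra regularity of the boundary is needed and all constants remain $\Omega$-independent. As a shorter alternative, one may simply observe that the uniform ball condition \eqref{ballCion} implies a uniform $\eps$-cone condition with $\eps$ depending only on $r_0$, so that the uniform extension theorem of Chenais applies verbatim; we favour the construction above because it stays within the signed-distance framework of the paper and makes the dependence of $C$ on $r_0$ fully explicit.
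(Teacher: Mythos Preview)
The paper does not prove this lemma at all: it is stated with a direct citation to Chenais, the surrounding text merely noting that the uniform ball condition implies the $\eps$-cone condition under which Chenais's extension theorem is formulated. Your proposal therefore goes well beyond what the paper does. You give an explicit higher-order reflection through the signed distance, in keeping with the extruded-surface philosophy of the rest of the article, whereas the paper is content to import the result as a black box. You even flag the Chenais shortcut yourself in the last sentence, which is exactly the paper's route.

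Your construction is correct at the level of detail given. The points that deserve a word of caution are the ones you already isolate: composition with the $W^{2,\infty}$ (not $\mathscr{C}^2$) maps $R_k$ preserves $H^2$ because the second-derivative chain rule produces only the terms $(\nabla^2 u\circ R_k)\,\nabla R_k\otimes\nabla R_k$ and $(\nabla u\circ R_k)\,\nabla^2 R_k$, both in $L^2$ with bounds depending only on the uniform $W^{2,\infty}$ estimates for $b_\Omega$ from \cref{lem:carac_Oo}; and the $H^2$ gluing across $\Gamma$ is justified by passing to normal coordinates via $T_\Omega$, where the interface becomes flat and the matching of the zeroth and first normal traces (your Vandermonde conditions) is exactly what is needed. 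One small bookkeeping point: for $R_2$ to land in the tube where $T_\Omega$ is a diffeomorphism you need $2h<r_0$, which your choice $h<r_0/3$ comfortably ensures. The payoff of your approach over the paper's citation is a self-contained argument with fully explicit dependence of $C$ on $r_0$; the payoff of the paper's approach is brevity, since Chenais's theorem covers the more general $\eps$-cone setting and no new work is required.
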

We will use this lemma to extend the solution of the PDEs to the whole box $D$.
The next step is to find a uniform $H^2$ estimate of the solutions. In our case such an estimate was proved by Dalphin who extended a result for domains with $\mathscr{C}^2$ boundary obtained by Grisvard in \cite{grisvardEllipticProblemsNonsmooth1985}.
\begin{lemma}[{\cite[Proposition 3.1]{dalphinExistenceOptimalShapes2020}}]
  There exists $C>0$ (depending only on $r_0$ and $D$) 
  such that for every $\Omega \in \Oo$ and $ f \in H^2(\Omega)\cap H^1_0(\Omega)$, we have
  \begin{align}
    \|f\|_{H^2(\Omega)} \leq C \|\Delta f\|_{L^2(\Omega)}.
  \end{align}
\end{lemma}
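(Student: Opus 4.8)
The plan is to prove the estimate in two stages: first the classical elliptic bound $\|f\|_{H^2(\Omega)}\le C(\|\Delta f\|_{L^2(\Omega)}+\|f\|_{L^2(\Omega)})$ with $C$ uniform over $\Oo$, and then the removal of the zeroth-order term. The second stage is soft. Since $f\in H^1(\Omega)\cap H^1_0(\Omega)$, its extension by zero lies in $H^1(\R^d)$ with support in $\overline D$; testing $\Delta f$ against $f$ and integrating by parts gives $\|\nabla f\|_{L^2(\Omega)}^2\le \|\Delta f\|_{L^2(\Omega)}\|f\|_{L^2(\Omega)}$, and the Poincaré inequality on the fixed bounded set $D$ (with constant depending only on $D$) yields $\|f\|_{L^2(\Omega)}\le C_D\|\nabla f\|_{L^2(\Omega)}$; hence $\|f\|_{H^1(\Omega)}\le C_D'\|\Delta f\|_{L^2(\Omega)}$, which is absorbed into the right-hand side.

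For the first stage, split $\Omega$ into the interior part $\Omega\setminus U_\delta(\partial\Omega)$ and the boundary collar $\Omega\cap U_\delta(\partial\Omega)$ for a fixed $\delta\in(0,r_0)$. On the interior part the standard interior elliptic estimate gives $\|f\|_{H^2}\le C(d,\delta)(\|\Delta f\|_{L^2(\Omega)}+\|f\|_{L^2(\Omega)})$; being local and scale-explicit, its constant does not depend on $\Omega$, and the covering of $\Omega\setminus U_\delta(\partial\Omega)$ by balls of radius $\sim\delta$ has cardinality bounded in terms of $d$, $\delta$ and $D$. The real content is a uniform $H^2$ estimate in the collar. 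There one runs Grisvard's argument for domains with $\mathscr{C}^{1,1}$ boundary \cite{grisvardEllipticProblemsNonsmooth1985}: the reach constraint $\operatorname{Reach}(\partial\Omega)\ge r_0$, equivalently the uniform ball condition \eqref{ballCion} (see \cref{lem:carac_Oo}), guarantees that around every point of $\partial\Omega$ the hypersurface is the graph of a $\mathscr{C}^{1,1}$ function over a ball of a fixed radius $\rho=\rho(r_0)$, with the Lipschitz constant of its gradient bounded by $1/r_0$. Flattening the boundary by the associated $\mathscr{C}^{1,1}$ change of variables turns $\Delta$ into a uniformly elliptic operator in divergence form whose coefficients are Lipschitz with norm controlled by $r_0$; the Nirenberg difference-quotient method in the tangential directions then bounds the tangential second derivatives, and the transformed equation recovers the remaining normal–normal derivative, all with constants depending only on $d$ and $r_0$.

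To globalize, one needs a finite cover of $\partial\Omega$ by such graph neighborhoods of controlled size, with a uniformly bounded number of pieces and a subordinate partition of unity with uniformly bounded derivatives. Since $\Omega\subset D$ and $\mathcal{H}^{d-1}(\partial\Omega)\le C(d,r_0,D)$ by Point~\ref{lem_prop:bounded_area} of \cref{lem:carac_Oo}, while the reach bound forces the patches to have a fixed size, such a cover can be extracted with cardinality bounded in terms of $d$, $r_0$ and $D$ alone. Summing the collar estimates over the cover, adding the interior estimate, and then applying the absorption step of the first paragraph yields $\|f\|_{H^2(\Omega)}\le C(d,r_0,D)\|\Delta f\|_{L^2(\Omega)}$, as claimed.

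I expect the main obstacle to be precisely the bookkeeping of uniformity in the boundary analysis: one must verify that the graph charts, the ellipticity constants, the Lipschitz bounds on the transformed coefficients, the cardinality of the cover and the partition of unity can all be chosen depending only on $(d,r_0,D)$. This is exactly where the uniform positive reach enters, and it is the content of \cite[Proposition 3.1]{dalphinExistenceOptimalShapes2020}, which refines \cite{grisvardEllipticProblemsNonsmooth1985}. Unlike the other results of this paper, this step is not shortened by the signed-distance viewpoint: even a compactness–contradiction argument based on \cref{th:compact} and the extension operator would still reduce, after discarding the interior by local estimates and the $H^1$ part by the Poincaré identity above, to a uniform bound on the second derivatives in the collar. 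This is why we merely quote the result.
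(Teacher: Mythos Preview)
The paper does not prove this lemma at all: it is stated as a citation of \cite[Proposition~3.1]{dalphinExistenceOptimalShapes2020}, with the surrounding text only remarking that Dalphin extended Grisvard's $\mathscr{C}^2$ result to the uniform ball setting. Your proposal is therefore not a comparison target but an independent sketch of how such a proof would go, and you acknowledge this yourself in the final paragraph.

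That said, your outline is sound. The absorption step $\|f\|_{H^1(\Omega)}\le C_D\|\Delta f\|_{L^2(\Omega)}$ via integration by parts and the Poincar\'e inequality on the fixed box $D$ is correct and genuinely uniform in $\Omega$. The collar analysis you describe---uniform $\mathscr{C}^{1,1}$ graph charts of fixed aperture from the reach bound, Nirenberg's difference quotients for tangential derivatives, recovery of the normal--normal derivative from the equation, and a finite cover of controlled cardinality---is exactly the strategy carried out in \cite{dalphinExistenceOptimalShapes2020}, and your identification of the uniformity bookkeeping as the main burden is accurate. Your closing observation that the signed-distance viewpoint does not shorten this particular step is also consistent with the paper's own stance: this is one of the results the authors explicitly chose not to re-prove.
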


As a consequence, we have a uniform $H^2(D)$ estimate on the extension of the solution $u_\Omega$, namely,
\begin{align}
  \label{eq:apriori_estimate_PDE}
  \|E_\Omega(u_\Omega)\|_{H^2(D)}\leq C \|h\|_{L^2(D)},\qquad \forall \Omega \in \Oo.
\end{align}

Let us now consider $\Omega_n \xrightarrow[]{R} \Omega_\infty$. Using \cref{eq:apriori_estimate_PDE}, we get that $(E_{\Omega_n} (u_{\Omega_n}))_{n\in\N}$ is uniformly bounded in $H^2(D)$.
Up to extracting a subsequence, we can assume that
\begin{align}
  \label{eq:convergence_EDP_ustar}
  E_{\Omega_n} (u_{\Omega_n})\xrightarrow[n \to \infty]{} u^*
  \left\{
  \begin{array}{c}
    \text{weakly in $H^2(D)$} \\
    \text{strongly in $H^{1}(D)$}.
  \end{array}
  \right.
\end{align}
The next step is to prove that the restriction to $\Omega_\infty$ of $u^*$ is $u_{\Omega_\infty}$.

To this aim, let us consider an arbitrary compact set $K$ contained in the interior of ${\Omega}_\infty$ and a $\mathscr{C}^\infty$ function  $\varphi$ with compact support included in $K$. For $n$ large enough, $K$ is contained in the interior of ${\Omega}_n$ (see \cref{lem:inclusions}), and, therefore, one has $\varphi \in H^1_0(\Omega_n)$ for such integers $n$. Using the variational
 formulation of the PDE \eqref{eq:PDE_Omega}, we get
\begin{align}
  \int_D \langle \nabla E_{\Omega_n}(u_{\Omega_n}),\nabla \varphi \rangle - f \varphi =0.
\end{align}
Using the density of $\mathscr{C}^\infty$ functions with compact support in $H^1_0(\Omega_\infty)$ and passing to the limit yields that $u^*|_{\Omega_\infty}=u_{\Omega_\infty}$.
\begin{remark}
  In order to replace Dirichlet boundary conditions by Neumann's ones, one can follow similar steps as those leading to Equation \eqref{eq:convergence_EDP_ustar}. Then, by considering the variational formulation with $\varphi \in \mathscr{C}^\infty(D)$ and passing to the limit in
  $$\int_{\Gamma_n} g \partial_{\nu} \varphi \to \int_{\Gamma_\infty} g \partial_{\nu} \varphi,$$
  (consequence of \cref{cor:continuity} if $g\in \mathscr{C}^0(D)$)
  one gets that $u^*|_{\Omega_\infty}=u_{\Omega_\infty}$.

\end{remark}

The last step is to relate $F_3(\Omega_n)$ and $F_3(\Omega_\infty)$. Since the involved functions belong to Sobolev spaces and since one aims at comparing surface integrals with tubular ones, we need a suitable 
uniform trace result.
\begin{lemma}
  \label{lem:uniform_trace}
  There exists $C$ such that for every $h<\frac{r_0}{2}$, every $n\in \bar \N$ and every $f\in H^{1}(U_\frac{r_0}{2}(\Gamma_n))$,
  \begin{align}
    \|f-\tilde f\circ p_n \|_{L^2(U_h(\Gamma_n))} \leq 
     C h \|f\|_{H^1(U_\frac{r_0}{2}(\Gamma_n))},
  \end{align}
  where $\tilde f$ denotes the trace of $f$ on $\Gamma_n$.
\end{lemma}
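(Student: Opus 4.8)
The plan is to reduce the estimate to a one-dimensional Poincaré-type inequality along the normal fibers of the tubular neighborhood, using the foliation $T_n\colon(-h,h)\times\Gamma_n\to U_h(\Gamma_n)$ introduced in Section~\ref{subsec:extruded_surf}. First I would fix $h<r_0/2$ and $n\in\bar\N$, and work in the coordinates $(t,x)$ provided by $T_n^{-1}$, so that a point $y\in U_h(\Gamma_n)$ is written $y=x+t\nabla b_{\Omega_n}(x)$ with $x=p_n(y)\in\Gamma_n$ and $t=b_{\Omega_n}(y)\in(-h,h)$. In these coordinates the map $\tilde f\circ p_n$ is simply $(t,x)\mapsto \tilde f(x)$, i.e.\ it is constant along each fiber $\{x\}\times(-h,h)$. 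The key one-dimensional observation is that, by absolute continuity of $H^1$ functions along almost every line (a consequence of Fubini applied to the composition $g_x(t)=f(x+t\nabla b_{\Omega_n}(x))$, which lies in $H^1(-h,h)$ for a.e.\ $x\in\Gamma_n$), one has for a.e.\ $x$
\begin{align*}
  g_x(t)-g_x(0)=\int_0^t g_x'(s)\,ds,
\end{align*}
whence, by Cauchy--Schwarz, $|g_x(t)-g_x(0)|^2\le |t|\int_{-h}^{h}|g_x'(s)|^2\,ds\le h\int_{-h}^{h}|g_x'(s)|^2\,ds$ for every $t\in(-h,h)$.

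Next I would integrate this pointwise bound. Integrating in $t\in(-h,h)$ gives $\int_{-h}^{h}|g_x(t)-g_x(0)|^2\,dt\le 2h^2\int_{-h}^{h}|g_x'(s)|^2\,ds$, and then integrating in $x$ over $\Gamma_n$ and passing back to the Euclidean variable $y$ via the change of variables of Lemma~\ref{lem:integral_volume_surface} (whose Jacobian $\det(dT_n)$ is bounded above and below uniformly in $n$ by Lemma~\ref{lem:det_bounded}, say between $1/2$ and $3/2$ once $h$ is chosen small enough — and I would note that for fixed $h<r_0/2$ one can invoke the footnote that $T_n$ is invertible on all of $U_{r_0}(\Gamma_n)$, so no smallness of $h$ beyond $h<r_0/2$ is needed, only the uniform bound $\|\nabla^2 b_{\Omega_n}\|\le 2/(r_0-h)$ from Lemma~\ref{lem:carac_Oo}\ref{lem_prop:Lipschitz}), yields
\begin{align*}
  \|f-\tilde f\circ p_n\|_{L^2(U_h(\Gamma_n))}^2\le C h^2\int_{U_h(\Gamma_n)}|g_{p_n(y)}'(b_{\Omega_n}(y))|^2\,dy.
\end{align*}
It remains to bound the right-hand side by $\|f\|_{H^1(U_{r_0/2}(\Gamma_n))}^2$: the fiber derivative $g_x'(t)$ is exactly the directional derivative of $f$ in the unit direction $\nabla b_{\Omega_n}(x)$ evaluated at $y=x+t\nabla b_{\Omega_n}(x)$, i.e.\ $g_x'(t)=\langle\nabla f(y),\nabla b_{\Omega_n}(p_n(y))\rangle$, which is pointwise dominated by $|\nabla f(y)|$ since $\nabla b_{\Omega_n}$ is a unit vector. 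Hence the integral is at most $\int_{U_h(\Gamma_n)}|\nabla f|^2\le\|f\|_{H^1(U_{r_0/2}(\Gamma_n))}^2$, and we conclude with a constant $C$ depending only on the uniform Jacobian bounds, hence only on $d$, $r_0$ (and, through $U_{r_0/2}$ sitting inside a fixed box, on $D$).

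The main obstacle is the justification of the fiberwise absolute continuity and the legitimacy of the change of variables for a mere $H^1$ (not continuous) function: $f$ has no pointwise trace a priori, so the symbol $\tilde f$ and the identity $g_x(0)=\tilde f(x)$ must be made rigorous. The clean way is to argue first for $f\in\mathscr{C}^\infty(\overline{U_{r_0/2}(\Gamma_n)})$, for which everything above is elementary and the trace $\tilde f=f|_{\Gamma_n}$ is classical, and then pass to the limit: the (standard, and uniform in $n$ because the geometry is uniformly $\mathscr{C}^{1,1}$) trace operator $H^1(U_{r_0/2}(\Gamma_n))\to L^2(\Gamma_n)$ is continuous, smooth functions are dense, and both sides of the asserted inequality are continuous with respect to $H^1$ convergence of $f$ (for the left-hand side one uses that $\|\tilde f\circ p_n\|_{L^2(U_h(\Gamma_n))}\le C\|\tilde f\|_{L^2(\Gamma_n)}$ again by Lemma~\ref{lem:integral_volume_surface}). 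A secondary point to watch is that the constant must genuinely not depend on $n$; this is ensured throughout by using only the uniform estimates of Lemma~\ref{lem:carac_Oo} and Lemma~\ref{lem:det_bounded}, never any $\Omega$-specific quantity.
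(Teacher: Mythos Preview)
Your proposal is correct and follows essentially the same approach as the paper: foliate $U_h(\Gamma_n)$ by normal fibers via $T_n$, use a one-dimensional inequality along each fiber, integrate over $\Gamma_n$ with the uniform Jacobian bounds, and pass from smooth $f$ to general $f\in H^1$ by density. The only cosmetic difference is that the paper packages the fiberwise estimate as the Sobolev--H\"older embedding $H^1([-r_0/2,r_0/2])\hookrightarrow\mathscr{C}^{1/2}$ (hence integrates $|\partial_{\nabla b_{\Omega_n}}f|^2$ over the full interval $(-r_0/2,r_0/2)$), whereas you obtain the same bound directly from the fundamental theorem of calculus and Cauchy--Schwarz on $(-h,h)$; both routes yield the factor $h^2$ and a constant depending only on $r_0$ and $d$.
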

\begin{proof}
  Let $f$ be a smooth function. According to \cref{lem:prop_p}, 
every point  $y\in U_h(\Gamma_n)$ can be written in a unique way as $y=x+t\nabla b_{\Omega_n}(x)$ with $x=p_n(y)\in \Gamma_n$ and $t\in (-h,h)$. Moreover, one has
  \begin{align*}
    |f(x+t\nabla b_{\Omega_n}(x))-f(x)|^2
     & \leq  C^2 \|\partial_{\nabla b_{\Omega_n}(x)} f(x+y \nabla b_{\Omega_n}(x))\|^2_{L^2_y(-\frac{r_0}{2},\frac{r_0}{2})} |t|,
  \end{align*}
where $\partial_{\nabla b_{\Omega_n}}$ stands for the derivative in the direction $\nabla b_{\Omega_n}(x)$ and $C$ is the norm of the continuous embedding of $H^1([-\frac{r_0}{2},\frac{r_0}{2}])$ into the space $\mathscr{C}^{\frac{1}{2}}$ of $\frac{1}{2}$-H\"older continuous functions.
  Hence, using \cref{lem:det_bounded}, we get
  \begin{align*}
    \|f-f\circ p_n \|^2_{L^2(U_h(\Gamma_n))} & =\int_{-h}^h \int_{\Gamma_n} |f(x+t \nabla b_{\Omega_n}(x))-f(x)|^2 \det(dT_n)\, dx dt                                                                          \\
& \leq \int_{-h}^h \int_{\Gamma_n} C^2 \|\partial_{\nabla b_{\Omega_n}} f(x+y \nabla b_{\Omega_n}(x))\|^2_{L^2_y(-\frac{r_0}{2},\frac{r_0}{2})} |t| \det(dT_n)\, dx dt \\
& \leq  C^2 h^{2} \int_{\Gamma_n}\|\partial_{\nabla b_{\Omega_n}} f(x+y \nabla b_{\Omega_n}(x))\|^2_{L^2_y(-\frac{r_0}{2},\frac{r_0}{2})}(1+{\operatorname{o}}_{h\to 0}(1))\, dx           \\
& \leq  C^2 h^{2} \|f\|^2_{H^1(U_\frac{r_0}{2}(\Gamma_n))}(1+{\operatorname{o}}_{h\to 0}(1)).
  \end{align*}
  We conclude thanks to the density of the smooth functions in $H^1$.
\end{proof}
Using that $u_{\Omega_n}$ is uniformly bounded in $H^2(D)$, let us apply \cref{lem:uniform_trace} to $u_{\Omega_n}$ and $\nabla u_{\Omega_n}$. We obtain
\begin{align*}
  \|u_{\Omega_n}-u_{\Omega_n}\circ p_n\|_{L^2(U_h(\Gamma_n))}^2+ \|\nabla u_{\Omega_n}-(\nabla u_{\Omega_n})\circ p_n\|_{L^2(U_h(\Gamma_n))}^2 ={\operatorname{o}}_{h\to 0}(h).
\end{align*}
The end of the proof is similar to the one of \cref{th:lsc} and consists in using the extruded surface approach to prove
\begin{align*}
  \liminf_{n\to +\infty} F_3(\Omega_n)
   & \geq (1+{\operatorname{o}}_{h\to 0}(1))
  \liminf_{n\to +\infty} \frac{1}{2h} \int_{U_h(\Gamma_n)} j_3(x,\nabla b_{\Omega_n}(p_n(x)),E_{\Omega_n}(u_{\Omega_n})(x),\nabla E_{\Omega_n}(u_{\Omega_n})(x)) \, dx \\
   & \geq (1+ {\operatorname{o}}_{h\to 0}(1)) (1 + \operatorname{O}\left(\frac{t}{h}\right)) \liminf_{n\to +\infty} \frac{1}{2(h-t)} \int_{U_{h-t}(\Gamma_\infty)} j_3(x,\nabla b_{\Omega_\infty}(p_\infty(x)),u^*(x),\nabla u^*(x)) \, dx                     \\
   & \qquad+ {\operatorname{o}}_{h\to 0}(1)+\operatorname{O}\left(\frac{t}{h}\right)\\
   & \geq F_3(\Omega_\infty) + {\operatorname{o}}_{h\to 0}(1)+\operatorname{O}\left(\frac{t}{h}\right),
\end{align*}
which concludes the proof.

\section{Conclusion}

In this paper, we have introduced a new method to tackle the existence issue for shape optimization problems under uniform reach constraints on the considered shapes, of the type
$$
\inf_{\Omega\in \Oo} \int_{\partial \Omega} j(x,\nu_{\partial\Omega}(x), B_{\partial\Omega}(x)) \, d\mu_{\partial \Omega}(x).
$$
While several references such as \cite{guoConvergenceBoundaryHausdorff2013,dalphinUniformBallProperty2018,dalphinExistenceOptimalShapes2020} have already addressed similar questions on the same type of problems, we believe that the approaches developed in this paper are on the one hand simpler, but also sufficiently robust to allow easy extension of the results to more general settings.

For example, we believe that minor adaptations of the developed proof techniques allow one to extend our results to the following cases without much effort:

- Under weaker regularity hypotheses, one could think of replacing the continuity assumption by lower semicontinuity on the integrand $j_{\{1,2,3\}}$. Another example would be to assume that $f$ in equation \eqref{PDEmanifold} belongs to $H^{1/2}(D)$ instead of $\mathcal{C}(D)$.

- More general PDEs could be considered \cref{th:elliptic_inside,th:elliptic_in_boundary}. Extension to general elliptic equations associated with differential operators of the kind $\nabla_\Gamma\cdot (\sigma \nabla_\Gamma)$ satisfying a coercivity property should be straightforward. We also believe that our framework allows extensions to nonlinear elliptic PDEs under reasonable assumptions.

- One could consider costs involving the solution of a minimization problem depending on $\Omega$ but not necessarily related to a PDE. Indeed, in the proof of \cref{th:elliptic_in_boundary}, our study of the variational problem does not rely on the underlying PDE. We treated a case involving a convex minimization problem over the set of divergence-free vectors fields on $\partial \Omega$ in \cite{privatOptimalShapeStellarators2022}.

All those generalizations do not seem obvious when using other methods.
\appendix
\begin{center}
\fbox{\Large{\bf Appendix}}
\end{center}
\section{Curvatures of a submanifold}
\label{sec:curv}

\setcounter{definition}{0}
\renewcommand{\thedefinition}{A.\arabic{definition}}
\setcounter{equation}{0}
\renewcommand{\theequation}{A.\arabic{equation}}
\setcounter{remark}{0}
\renewcommand{\theremark}{A.\arabic{remark}}
\setcounter{lemma}{0}
\renewcommand{\thelemma}{A.\arabic{lemma}}

Let us quickly review the definition of the mean curvature 
for an oriented $(d-1)$-submanifold of $\R^d$ with $\mathscr{C}^{1,1}$ regularity. To stick with our notation, we 
consider the submanifold to be 
the boundary of some $\Omega \in \Oo$.

\begin{definition}
  The Gauss map is the application which assigns to each $x \in \Gamma=   \partial \Omega$ the direct unit normal vector to $\Gamma$ at $x$. In our setting it can be defined as
  \begin{align*}
    N \colon \Gamma & \to \mathcal{S}^{d-1}       \\
    x                        & \mapsto \nabla b_\Omega(x).
  \end{align*}
  We can now define the following objects:
  \begin{itemize}
    \item the shape operator (or Weingarten map) is the differential of the Gauss map. For every $x\in \Gamma$, the tangent spaces $T_x\Gamma$ and $T_{N(x)}\mathcal{S}^{d-1}$ are equal as linear subspaces of $\R^d$ and, the shape operator at $x$ is self-adjoint where it is defined. See e.g. \cite[Chapter 5]{jostRiemannianGeometryGeometric2017} for a general introduction.
    \item The trace of the shape operator is called the mean curvature and is denoted $H$\footnote{Note that in differential geometry it is common to define the mean curvature as the trace of the shape operator divided by $(d-1)$.}.
    \item The determinant of the shape operator is called the Gauss curvature.
  \end{itemize}
\end{definition}
\begin{remark}
  The Gauss map is $\frac{1}{r_0}$-Lipschitz continuous (see 
  \cref{lem:carac_Oo}), where $r_0$ is the reach of $\Gamma$. Thus, the shape operator is in $L^\infty$ and at almost every $x\in \Gamma$, all the eigenvalues $\kappa_1(x),\dots,\kappa_{d-1}(x)$ of the shape operator
  are bounded in modulus by $
  \frac{1}{r_0}$.
\end{remark}

We insist on the fact that $N$ is defined only on $\Gamma$ and thus the shape operator is not defined on $\R^d$ or any tubular neighborhood of $\Gamma$. Nevertheless, we have the following property.
\begin{lemma}
  \label{lem:trace_curvature}
  The mean curvature 
  coincides with the trace of $\nabla^2 b_\Omega$ on $\Gamma$.
\end{lemma}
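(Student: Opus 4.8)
The plan is to compare the trace of the shape operator at a point $x\in\Gamma$ with the trace of the Hessian $\nabla^2 b_\Omega(x)$ by decomposing $\R^d$ into the tangent hyperplane $T_x\Gamma$ and the normal line $\R\,\nabla b_\Omega(x)$, and to show that $\nabla^2 b_\Omega(x)$ kills the normal direction while restricting on $T_x\Gamma$ to exactly the shape operator. Concretely, I would first recall that $\nabla b_\Omega$ has unit norm on the tubular neighborhood $U_h(\Gamma)\setminus\Gamma$ (by Lemma~\ref{lem:carac_Oo}, Point~\ref{lem_prop:normal}, together with the eikonal identity $|\nabla b_\Omega|=1$), so that differentiating $|\nabla b_\Omega|^2=1$ gives $\nabla^2 b_\Omega(x)\,\nabla b_\Omega(x)=0$ at every point where the Hessian exists; in particular $\nabla b_\Omega(x)$ is in the kernel of $\nabla^2 b_\Omega(x)$. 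This is legitimate at almost every $x$, and since the Gauss map is Lipschitz its differential exists a.e.\ on $\Gamma$; the statement is to be read in this a.e.\ sense (or, at points of twice-differentiability, pointwise).

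Next, for $x\in\Gamma$ and a tangent vector $v\in T_x\Gamma$, I would compute the directional derivative of the Gauss map $N=\nabla b_\Omega|_\Gamma$ along $v$ by choosing a curve $\gamma$ in $\Gamma$ with $\gamma(0)=x$, $\dot\gamma(0)=v$, and differentiating $t\mapsto \nabla b_\Omega(\gamma(t))$: the chain rule gives $d_x N(v)=\nabla^2 b_\Omega(x)\,v$, i.e.\ the shape operator is precisely the restriction of $\nabla^2 b_\Omega(x)$ to $T_x\Gamma$ (this is where one uses that $b_\Omega$ is $\mathscr{C}^{1,1}$ so $\nabla b_\Omega$ is Lipschitz and differentiable a.e., and that $\gamma$ stays on $\Gamma$ where $b_\Omega\equiv 0$). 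Since $\nabla^2 b_\Omega(x)$ is symmetric, the decomposition $\R^d=T_x\Gamma\oplus\R\,\nabla b_\Omega(x)$ is orthogonal and invariant under $\nabla^2 b_\Omega(x)$: on the tangent hyperplane it acts as the shape operator, and on the normal line it is zero. Taking the trace over the whole of $\R^d$ thus yields $\operatorname{Tr}\nabla^2 b_\Omega(x)=\operatorname{Tr}(\text{shape operator at }x)+0=H_\Omega(x)$, which is the claim.

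The main obstacle, such as it is, lies in the regularity bookkeeping: $b_\Omega$ is only $\mathscr{C}^{1,1}$, so $\nabla^2 b_\Omega$ exists merely almost everywhere on the neighborhood, and one must make sure that at $\mathcal{H}^{d-1}$-a.e.\ point of $\Gamma$ the Hessian exists, that the identity $\nabla^2 b_\Omega\,\nabla b_\Omega=0$ propagates to those points, and that the tangential differentiation of $N$ is valid there. This is handled exactly as in \cite{delfourShapesGeometries2011} and is the content of Lemma~\ref{lem:delff} in the bulk of the paper (the formula $\nabla^2 b_{\Omega}(p(x))=\nabla^2 b_{\Omega}(x)[\Id-b_{\Omega}(x)\nabla^2 b_{\Omega}(x)]^{-1}$, which for $x\in\Gamma$ reduces to the identity and shows the two Hessians agree on $\Gamma$). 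Beyond that caveat, everything is linear algebra and the chain rule, so the proof is short.
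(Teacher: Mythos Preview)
Your proposal is correct and follows essentially the same approach as the paper: decompose $\R^d$ into $T_x\Gamma\oplus\R\,\nabla b_\Omega(x)$, observe that the Hessian of $b_\Omega$ vanishes in the normal direction, and identify its restriction to $T_x\Gamma$ with the shape operator so that the two traces agree. The only cosmetic difference is that the paper justifies the vanishing in the normal direction by quoting that $\nabla b_\Omega$ is constant along normal rays (Delfour--Zol\'esio), whereas you obtain it by differentiating the eikonal identity $|\nabla b_\Omega|^2=1$; these are equivalent one-line arguments.
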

\begin{proof}
  Let $x\in \Gamma$ and let $\mathcal{B}$ be an orthonormal basis of $T_x\Gamma$. Using the identification between $T_x\Gamma$ and the tangent hyperplane (see \cref{rmk:identification_tangent_plan}), we obtain that $\{\nabla b_\Omega(x)\}\cup \mathcal{B}$ is an orthonormal basis of $\R^d$. $\nabla b_\Omega$ is constant along the direction $\nabla b_\Omega(x)$ (see e.g. \cite[Theorem 7.8.5.ii]{delfourShapesGeometries2011}). As a consequence, the trace of $\nabla^2 b_\Omega$ and the mean curvature coincide.
\end{proof}

\section{$R$-convergence: proof of \cref{th:compact}}\label{sec:appendRCV}

The compactness property follows from two facts. First, the Arzel\`a--Ascoli theorem, combined with the fact that every function $b_{\Omega}$, for $\Omega\in \Oo$, is 1-Lipschitz continuous. Second, the reach constraint which imposes a uniform bound on the second derivative of $b_{\Omega}$. These two facts are used in \cite{delfourShapesGeometries2011} and \cite{dalphinUniformBallProperty2018} to get the sequential compactness results used below.

Let $(\Omega_n)_{n\in\N}$ denote a sequence in $\Oo$. By the compactness property of sets of uniformly positive reach proved in \cite[Chapter~6]{delfourShapesGeometries2011},
 it follows 
that, up to a subsequence, 
$b_{\Omega_n}$ converges to $b_{\Omega_\infty}$ for the $C^0$ topology on $D$.
In \cite{dalphinUniformBallProperty2018} the convergence 
is shown to  hold also for the strong $\mathscr{C}^{1,\alpha}$ topology  (for $\alpha<1$) and for the weak $W^{2,\infty}$ topology  in a $r$-tubular neighborhood of $\partial\Omega_\infty$, with $r<r_0$.

As a consequence, $\operatorname{Reach}(
\Gamma_\infty
)\geq r_0$.
In particular, according to \cref{lem:carac_Oo}, $b_{\Omega_\infty}$ is $\mathscr{C}^{1,1}$ on $\overline{U_{r}(\Gamma_\infty)}$.
%

\section{The Laplace--Beltrami equation on a manifold: proof of \cref{lem:1915}}\label{proof:lem:1915}

\setcounter{theorem}{0}
\renewcommand{\thetheorem}{C.\arabic{theorem}}
\setcounter{equation}{0}
\renewcommand{\theequation}{C.\arabic{equation}}
\setcounter{proposition}{0}
\renewcommand{\theproposition}{C.\arabic{proposition}}

Let $(\partial\Omega, \mathfrak{g})$ denote a closed compact manifold. We explain hereafter how to understand the equation $\Delta_{\partial\Omega}v=h$ in $\partial\Omega$ in a weak sense, whenever $\Omega\in  \Oo$. Indeed, under this assumption, $\partial\Omega$ is a $\mathscr{C}^{1,1}$ submanifold according to \cref{lem:carac_Oo}, not necessarily $\mathscr{C}^{2}$, which justifies why such an equation cannot be understood in a strong sense.

The key ingredient in what follows is the Rellich--Kondrachov lemma, stating the compactness of the embedding $H^1_*(\partial\Omega)\hookrightarrow L^2(\partial\Omega)$.
\begin{theorem}[Rellich--Kondrachov theorem on surfaces]\label{th:RKSurf}
  Let $\Omega\in \Oo$.  Let $(u_n)_{n\in\N}$ denote a sequence in $H^1_*(\partial\Omega)$ such that  $(\int_{\partial\Omega}|\nabla u_n(x)|^2\, d\mu_{\partial\Omega})_{n\in\N}$ is bounded. There exists $u^*\in H^1_*(\partial\Omega)$ such that, up to a subsequence, $(u_n)_{n\in\N}$ converges to $u^*$ weakly in $H^1_*(\partial\Omega)$ and strongly in $L^2(\partial\Omega)$.
\end{theorem}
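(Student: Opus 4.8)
The plan is to reduce the statement to the classical Rellich--Kondrachov theorem on a fixed smooth compact manifold by exploiting the $\mathscr{C}^{1,1}$ regularity of $\partial\Omega$ together with a finite partition of unity built from local charts. Since $\Omega\in\Oo$, \cref{lem:carac_Oo} tells us that $\Gamma=\partial\Omega$ is a compact $\mathscr{C}^{1,1}$ $(d-1)$-submanifold of $\R^d$; in particular it is compact, so it admits a finite atlas $(\mathcal{U}_i,\phi_i)_{i=1,\dots,M}$ with $\phi_i:\mathcal{U}_i\to V_i\subset\R^{d-1}$ bi-Lipschitz and with Lipschitz inverse, and a subordinate Lipschitz partition of unity $(\chi_i)_i$. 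Because the metric $\mathfrak{g}$ and the Jacobians of the chart transitions are merely $L^\infty$ (bounded above and below away from $0$), the Sobolev norm $\|u\|_{H^1(\Gamma)}^2$ is equivalent to $\sum_i \|(\chi_i u)\circ\phi_i^{-1}\|_{H^1(V_i)}^2$, and likewise for the $L^2$ norms.

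The key steps, in order, would be: \textbf{(i)} fix such a finite atlas and partition of unity; \textbf{(ii)} establish a uniform Poincar\'e inequality on $\Gamma$ (this is \cref{lem:PoincaSurf}, invoked elsewhere in the paper, or can be obtained directly from compactness and connectedness of $\Gamma$), so that the hypothesis that $(\int_\Gamma|\nabla u_n|^2)_n$ is bounded, combined with the zero-mean condition defining $H^1_*$, yields that $(u_n)_n$ is bounded in $H^1(\Gamma)$; \textbf{(iii)} transport each localized piece $u_n^i := (\chi_i u_n)\circ\phi_i^{-1}$ to $\R^{d-1}$, observe it is supported in a fixed bounded open set and bounded in $H^1(V_i)$ uniformly in $n$, and apply the flat Rellich--Kondrachov theorem on $V_i$ to extract an $L^2(V_i)$-convergent subsequence; \textbf{(iv)} by a diagonal argument over the finitely many indices $i$, extract a single subsequence along which all $u_n^i$ converge in $L^2$; \textbf{(v)} glue back via $\sum_i (u_n^i\circ\phi_i)$ to get $u_n\to u^*$ strongly in $L^2(\Gamma)$; \textbf{(vi)} since $(u_n)_n$ is bounded in the Hilbert space $H^1(\Gamma)$, a further (not needed: the same) subsequence converges weakly in $H^1(\Gamma)$, and by uniqueness of limits in the distributional sense the weak $H^1$ limit coincides with $u^*$; \textbf{(vii)} finally, weak $H^1$-convergence preserves the zero-mean constraint (it is a bounded linear functional), so $u^*\in H^1_*(\Gamma)$.

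The main obstacle is step \textbf{(ii)}--\textbf{(iii)}: one must be careful that all constants --- the equivalence constants between the intrinsic $H^1(\Gamma)$ norm and the localized flat norms, and the Poincar\'e constant --- are finite for the \emph{fixed} manifold $\Gamma$ (no uniformity in $\Omega$ is required here, which simplifies matters considerably compared to \cref{lem:PoincaSurf}). The $\mathscr{C}^{1,1}$ regularity is exactly what guarantees the chart maps are bi-Lipschitz with both the maps and their inverses Lipschitz, hence that pullback preserves $H^1$ and $L^2$ up to multiplicative constants; with only $\mathscr{C}^1$ regularity the metric could degenerate and the argument would fail. A secondary technical point is checking that multiplication by the Lipschitz cutoffs $\chi_i$ maps $H^1(\Gamma)$ to itself boundedly, which follows from the product rule $\nabla_\Gamma(\chi_i u)=\chi_i\nabla_\Gamma u+u\,\nabla_\Gamma\chi_i$ together with $\chi_i,\nabla_\Gamma\chi_i\in L^\infty$. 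Alternatively, and perhaps most economically, one may simply cite that a compact $\mathscr{C}^{1,1}$ Riemannian manifold satisfies the Rellich--Kondrachov compactness theorem --- this is standard --- and note that the zero-mean subspace $H^1_*(\Gamma)$ is a closed subspace preserved under weak limits, which immediately gives the stated refinement.
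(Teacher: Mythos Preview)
Your approach is correct but takes a genuinely different route from the paper's. You localize via a finite $\mathscr{C}^{1,1}$ atlas and Lipschitz partition of unity, pull back to bounded domains in $\R^{d-1}$, and apply the Euclidean Rellich--Kondrachov theorem chart by chart. The paper, consistent with its stated programme of \emph{avoiding} local charts, instead extends each $u_n$ to the tubular neighborhood $U_h(\partial\Omega)\subset\R^d$ by $u_n\circ p_\Omega$, invokes norm-equivalence results of Delfour--Zol\'esio \cite{delfourTangentialDifferentialCalculus2000} between the intrinsic $L^2(\Gamma)$ and $H^1_*(\Gamma)$ norms and their thickened counterparts on $U_h(\Gamma)$, and then applies the ordinary Rellich--Kondrachov theorem on that open subset of $\R^d$. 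Your route is the classical textbook one and is self-contained; the paper's route is shorter, skips all chart bookkeeping, and dovetails with the extruded-surface machinery developed in \cref{sec:extruded_surf} and used throughout the article.

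One caution on your step~(ii): invoking \cref{lem:PoincaSurf} here is circular, since in the paper that proposition is proved \emph{after} and \emph{using} \cref{th:RKSurf}. Your parenthetical alternative (``obtained directly from compactness and connectedness of $\Gamma$'') is the right fix, but the logical order matters: first run your steps~(iii)--(v) to obtain $L^2$-compactness for sequences bounded in the \emph{full} $H^1(\Gamma)$ norm (no Poincar\'e needed there), then deduce the Poincar\'e inequality on the fixed manifold $\Gamma$ by the standard contradiction argument, and only then upgrade the hypothesis from a bound on $\int_\Gamma|\nabla_\Gamma u_n|^2$ alone to a full $H^1$ bound.
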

\begin{proof}
  According to \cite[Th 4.5.ii]{delfourTangentialDifferentialCalculus2000}, since $\partial\Omega$ is $\mathscr{C}^{1,1}$, the $L^2$ norm $\Vert \cdot \Vert_{L^2(\partial\Omega)}$ on the surface $\partial\Omega$ and the $L^2$ norm $L^2(\partial\Omega)\ni u\mapsto \Vert u\circ p_\Omega \Vert_{L^2(U_h(\partial\Omega))}$ on the thickened surface $U_h(\partial\Omega)$ are equivalent whenever $h>0$ is small enough, where
 $p_\Omega(x)$ denotes the orthogonal projection of $x$ onto $\partial\Omega$, that is, $p_\Omega(x)=x-b_\Omega(x)\nabla b_\Omega(x)$, and   
  $U_h(\partial\Omega)=\{x\in \R^d\mid |b_\Omega(x)|<h \text{ and }p_\Omega(x)\in \partial\Omega\}$.

  Similarly, according to \cite[Th 4.7.v]{delfourTangentialDifferentialCalculus2000}, since $\partial\Omega$ is $\mathscr{C}^{1,1}$,
  the norm $\Vert \cdot\Vert_{H^1_*(\partial\Omega)}$ defined as
  $$
    \Vert u\Vert_{H^1_*(\partial\Omega)}^2=\int_{\partial\Omega}|\nabla_{\Gamma} u|^2\, d\mu_{\partial\Omega},
  $$
and the norm $\Vert \cdot\Vert_{H^1_{U_h(\partial\Omega)}
}$ given by
  $$
    \Vert u\Vert_{H^1_{U_h(\partial\Omega)}
    }=\frac{1}{2h}\int_{U_h(\partial\Omega)}|\nabla_{\Gamma} u\circ p_\Omega|^2\, d\mu_{\partial\Omega}
  $$
  are equivalent whenever $h>0$ is small enough.
  We conclude by using the standard Rellich--Kondrakov theorem (see e.g. \cite[Section~9.3]{brezisFunctionalAnalysisSobolev2011}) on the thickened surface $U_h(\partial\Omega)$.
\end{proof}

The following result is a Poincar\'e type lemma, uniform with respect to the chosen surface in the set $\Oo$.

\begin{proposition}[Poincar\'e lemma on a surface]\label{lem:PoincaSurf}
  Let $r_0>0$ and $\Omega\in \Oo$. There exists $C(r_0,D)>0$ such that
  $$
    \forall u\in H^1_*(\Gamma), \quad \int_{\Gamma}|\nabla_{\Gamma} u(x)|^2\, d\mu_{\Gamma}\geq C(r_0,D)\int_{\Gamma}|u(x)|^2\, d\mu_{\Gamma}.
  $$
\end{proposition}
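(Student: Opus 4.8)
The statement is a uniform Poincaré inequality on the hypersurfaces $\Gamma = \partial\Omega$, $\Omega \in \Oo$, and the natural strategy is to transfer the question to the thickened surface, where the classical Poincaré--Wirtinger inequality on a domain of $\R^d$ applies, and then make all constants uniform using the reach constraint. The plan is to exploit exactly the norm equivalences already invoked in the proof of \cref{th:RKSurf}: by \cite[Th 4.5.ii and 4.7.v]{delfourTangentialDifferentialCalculus2000}, for $h>0$ small (depending only on $r_0$), the $L^2(\Gamma)$ norm is equivalent to $u\mapsto \frac{1}{2h}\|u\circ p_\Omega\|_{L^2(U_h(\Gamma))}^2$ and the seminorm $\int_\Gamma |\nabla_\Gamma u|^2\,d\mu_\Gamma$ is equivalent to $\frac{1}{2h}\int_{U_h(\Gamma)}|\nabla_\Gamma u\circ p_\Omega|^2$. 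Thus it suffices to prove a Poincaré inequality for functions of the form $U = u\circ p_\Omega$ on the tubular neighborhood $U_h(\Gamma)$, with a constant independent of $\Omega \in \Oo$.

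First I would fix $h<r_0$ as in \cref{lem:det_bounded} so that $T_\Omega:(-h,h)\times\Gamma\to U_h(\Gamma)$ is a bi-Lipschitz homeomorphism with Jacobian close to $1$ uniformly in $\Omega$; this already gives $\frac{1}{2h}\int_{U_h(\Gamma)} U = \int_\Gamma u\,d\mu_\Gamma$ up to a factor $1+\smallo_{h\to0}(1)$, so the zero-mean condition on $u$ transfers to an approximate zero-mean condition on $U$ over $U_h(\Gamma)$. Next I would obtain a uniform connectedness/geometry control: since all $\Omega\in\Oo$ lie in the fixed compact box $D$ and satisfy the uniform ball condition $(\mathcal{B}_{r_0})$, the sets $U_h(\Gamma)$ have a uniformly bounded number of connected components, uniformly bounded diameter, and — crucially — a uniform cone (or chain) condition, so that the Poincaré--Wirtinger constant of $U_h(\Gamma)$ is bounded above by a constant $C(r_0,D)$ independent of $\Omega$. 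Here one can invoke the $\eps$-cone property implied by the uniform ball condition, together with the classical fact (e.g. via Boman chains or the argument of \cite{chenaisExistenceSolutionDomain1975}) that the Poincaré constant depends only on the cone parameters and the diameter. Applying this uniform Poincaré--Wirtinger inequality to $U$ and its mean over $U_h(\Gamma)$, then undoing the two norm equivalences, yields $\int_\Gamma |\nabla_\Gamma u|^2 \geq C(r_0,D)\int_\Gamma |u|^2$.

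The main obstacle is the uniformity of the Poincaré constant of $U_h(\Gamma)$, i.e. ruling out thin necks or long thin fingers that would degrade the constant. This is where the reach constraint does the real work: the uniform ball condition forbids the hypersurface from pinching, so a tube of fixed thickness $2h<2r_0$ around $\Gamma$ cannot be arbitrarily "thin" relative to its extent, and the number and size of its components are controlled by $D$ and $r_0$ alone (via the area bound of \cref{lem:carac_Oo}, Point~\ref{lem_prop:bounded_area}, and a volume/covering argument). Once this uniform geometric description is in hand, the rest is the routine chain of equivalences above. An alternative, perhaps cleaner, route avoiding any cone argument: run a compactness-contradiction argument using $R$-convergence — if no uniform $C$ existed, pick $\Omega_n\in\Oo$ and $u_n\in H^1_*(\Gamma_n)$ with $\int_{\Gamma_n}|u_n|^2 = 1$ and $\int_{\Gamma_n}|\nabla_{\Gamma_n} u_n|^2 \to 0$, transport via $\tau_n$ to $w_n\in H^1_*(\Gamma_\infty)$ exactly as in Step~1 of the proof of \cref{th:elliptic_in_boundary} (using \cref{lem:p_n_det} and \cref{lem:gradient_relation} so that $\|w_n\|_{L^2(\Gamma_\infty)}\to1$ and $\|\nabla_{\Gamma_\infty} w_n\|_{L^2(\Gamma_\infty)}\to0$), then apply \cref{th:RKSurf} to extract a limit $w_\infty$ with zero gradient and zero mean on the connected (up to finitely many components) $\Gamma_\infty$, forcing $w_\infty=0$ and contradicting $\|w_\infty\|_{L^2(\Gamma_\infty)}=1$. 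I expect the authors take this second route, since all its ingredients are already developed in the paper.
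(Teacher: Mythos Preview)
Your second route is exactly what the paper does: it argues by contradiction, extracts an $R$-convergent subsequence $\Omega_n\xrightarrow{R}\Omega_\infty$ via \cref{th:compact}, transports $v_n$ to $w_n$ on $\Gamma_\infty$ using the same change of variable as in Step~1 of the proof of \cref{th:elliptic_in_boundary} (with the ingredients of \cref{lem:p_n_det,lem:gradient_relation}), and then applies \cref{th:RKSurf} to reach a contradiction with a limit $w^*$ having zero tangential gradient, zero mean, and unit $L^2$ norm. Your expectation about which route the authors take is correct, and your sketch matches theirs essentially line for line.
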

\begin{proof}
  Let $(\Omega_n,v_n)_{n\in\N}$, with $v_n\in H^1_*(\Gamma_n)$, be a minimizing sequence for the problem
  $$
    \inf_{\Omega \in \Oo}\inf_{u\in H^1_*(\Gamma)}\frac{\int_{\Gamma}|\nabla_{\Gamma} u(x)|^2\, d\mu_{\Gamma}}{\int_{\Gamma}|u(x)|^2\, d\mu_{\Gamma}}.
  $$

  Let us argue by contradiction, assuming that
  $$
    \int_{\Gamma_n} |\nabla_{\Gamma_n} v_n(x)|^2 \, d\mu_{\Gamma_n}\leq \frac{1}{n}\quad \text{and}\quad \int_{\Gamma_n} |v_n(x)|^2\, d\mu_{\Gamma_n}=1
  $$
  by homogeneity of the Rayleigh quotient.
  According to \cref{th:compact}, we can assume without loss of generality that $(\Omega_n)_{n\in\N}$ $R$-converges toward $\Omega_\infty\in \Oo$.

  Let $p_n$ denote the orthogonal projection on $\Gamma_n$ and
  let us introduce the function $w_n$ defined in $U_h(\Gamma_n)$ for $h$ as in \cref{lem:det_bounded} and $n$ large enough by $w_n=v_n\circ p_n$. We follow exactly the same lines as in the first step of the proof of \cref{th:elliptic_in_boundary}.
  A direct adaptation of the first step of the proof of \cref{th:elliptic_in_boundary} yields
  \begin{eqnarray}
    \int_{\Gamma_\infty}|\nabla_{\Gamma_\infty} w_n(y)|^2\, d\mu_{\Gamma_\infty}(y)=\int_{\Gamma_n}\left|\nabla_{\Gamma_n} v_n\right|^2\, d\mu_{\Gamma_n}(x)+\operatorname{o}(1). \label{eq12121}
  \end{eqnarray}

  We infer that
  \begin{eqnarray*}
    \int_{\Gamma_\infty}|\nabla_{\Gamma_\infty} w_n|^2\, d\mu_{\Gamma_\infty}(x) & \leq \frac{1}{n}+\operatorname{o}(1).
  \end{eqnarray*}

  By using \cref{th:RKSurf}, we get that the sequence $(w_n)_{n\in\N}$ converges up to a subsequence toward $w^*\in H^1_*(\Gamma_\infty)$ weakly in $H^1(\Gamma_\infty)$ and strongly in $L^2(\Gamma_\infty)$. 
  Up to extracting a subsequence,
  we get
  \begin{eqnarray*}
    \int_{\Gamma_\infty} |\nabla_{\Gamma_\infty} w^*(x)|^2 \, d\mu_{\Gamma_\infty} & \leq & \liminf_{n\to +\infty}\int_{\Gamma_\infty} |\nabla_{\Gamma_\infty} w_n(x)|^2 \, d\mu_{\Gamma_\infty}=0,\\
    \int_{\Gamma_\infty} |w^*(x)|^2 \, d\mu_{\Gamma_\infty}&=&1, \\
    \int_{\Gamma_\infty} w_n(x) \, d\mu_{\Gamma_\infty}&=&0.
  \end{eqnarray*}
  By using the first equality, we get that $w^*$ is constant on $\Gamma$ and we obtain a contradiction with the two last equalities above.
\end{proof}

Let us now prove \cref{lem:1915}.
Let $(u_n)_{n\in\N}$ denote a minimizing sequence for Problem~\eqref{minNRJEpOm}. Since $(\mathscr{E}_{\Gamma}(u_n))_{n\in\N}$ is bounded, and since
$$
  \mathscr{E}_{\Gamma}(u_n)\geq C(d,r_0)\Vert u_n\Vert_{L^2(\Gamma)}^2-\Vert h\Vert_{L^2(\Gamma)}\Vert u_n\Vert_{L^2(\Gamma)}
$$
according to \cref{lem:PoincaSurf}, we infer that $(\Vert u_n\Vert_{L^2(\Gamma)})_{n\in\N}$ is bounded. Since
$$
  \int_{\Gamma} |\nabla_{\Gamma} u_n(x)|^2 \, d\mu_{\Gamma}=\mathscr{E}_{\Gamma}(u_n)+\int_{\Gamma} u_n(x)h(x)\, d\mu_{\Gamma} \leq \Vert h\Vert_{L^2(\Gamma)}\Vert u_n\Vert_{L^2(\Gamma)},
$$
we infer the existence of $u^*\in H^1_*(\Gamma)$ such that, up to a subsequence, $(u_n)_{n\in\N}$ converges weakly in $H^1_*(\Gamma)$ and strongly in $L^2(\Gamma)$.  
Up to extracting a subsequence, we get
$$
  \inf_{u\in H^1_*(\Gamma)}\mathscr{E}_{\Gamma}(u)
  =\liminf_{n\to +\infty} \mathscr{E}_{\Gamma}(u_n)\geq \int_{\Gamma} |\nabla_{\Gamma} u^*(x)|^2 \, d\mu_{\Gamma}-\int_{\Gamma} u^*(x)h(x)\, d\mu_{\Gamma}=\mathscr{E}_{\Gamma}(u^*)
$$
and the existence follows.
The uniqueness is standard and follows from the strong convexity of the functional $\mathscr{E}_{\Gamma}$.

\section*{Acknowledgements}
This work has been supported by the Inria AEX StellaCage.
The first author were partially supported by the ANR Project ``SHAPe Optimization - SHAPO''.

\bibliographystyle{abbrv} 
\bibliography{shape_opti}

\end{document}